\documentclass{amsart}
\addtolength{\textwidth}{2,3cm}
\addtolength{\hoffset}{-1,3cm}

\usepackage{amssymb,latexsym,color}
\theoremstyle{plain}
\newtheorem{theorem}{Theorem}
\newtheorem{corollary}{Corollary}
\newtheorem{proposition}{Proposition}
\newtheorem{lemma}{Lemma}
\theoremstyle{definition}
\newtheorem{definition}{Definition}

\newtheorem{notation}{Notation}
\newtheorem{remark}{Remark}
\newtheorem{question}{Question}

\newcommand{\red}[1]{{\color{black}#1}}

\begin{document}

\title[Symmetric border rank  4]{Stratification of the fourth secant variety of Veronese varieties via the symmetric rank}

\author{Edoardo Ballico, Alessandra Bernardi}
\address{Dept. of Mathematics\\
  University of Trento\\
38123 Povo (TN), Italy}
\address{University of Turin, Department of Mathematics ``Giuseppe Peano'', Via Carlo Alberto 10, I-10123 (TO), 
 Italy.}
\email{ballico@science.unitn.it, alessandra.bernardi@unito.it}
\thanks{The authors were partially supported by CIRM of FBK Trento 
(Italy), Project Galaad of INRIA Sophia Antipolis M\'editerran\'ee 
(France), Institut Mittag-Leffler (Sweden), Marie Curie: Promoting science (FP7-PEOPLE-2009-IEF), MIUR and GNSAGA of 
INdAM (Italy).}
\subjclass{15A21, 15A69, 14N15.}
\keywords{Rank of tensors, Polynomial decomposition, Secant varieties, Veronese Varieties.}

\begin{abstract}
If $X\subset \mathbb{P}^n$  is a projective non degenerate variety, the $X$-rank of a point $P\in \mathbb{P}^n$ is defined to be the minimum integer $r$ such that $P$ belongs to the span of $r$ points of $X$. We describe the complete stratification of the fourth secant variety of any Veronese variety $X$ via the $X$-rank. This result has an equivalent translation in terms either of symmetric tensors or of homogeneous polynomials. It allows to classify all the possible integers $r$ that can occur in the minimal  decomposition of a homogeneous polynomial of $X$-border rank $4$ (i.e. contained in the fourth secant variety) as a linear combination of  powers of linear forms.
\end{abstract}

\maketitle

\section*{Introduction}

Fix integers $m \ge 2$ and $d\ge 2$ and set $n_{m,d}:= \binom{m+d}{m}-1$. All along this paper the field $\mathbb {K}$ over which all the projective spaces and all the vector spaces will be defined is algebraically closed and of characteristic $0$.
Let $\nu_{m,d} : \mathbb{P}^m \hookrightarrow \mathbb {P}^{n_{m,d}}$ 
be the order $d$ Veronese embedding of $\mathbb {P}^m$ defined by the sections of the sheaf $\mathcal{O}_{\mathbb {P}^m}(d)$. Set: 
\begin{equation}\label{Xmd}
X_{m,d}:= \nu_{m,d}(\mathbb{P}^m). 
\end{equation}
We often set $X:= X_{m,d}$ and $n:= n_{m,d}$. 
The Veronese variety can be regarded both as the variety that parameterizes projective classes of homogeneous polynomials of degree $d$ in $m+1$ variables that can be written as $d$-th powers of linear forms, and as the variety that parameterizes projective classes of symmetric tensors $T\in V^{\otimes d}$ where $V$ is a vector space of dimension $m+1$ and $T=v^{\otimes d}$ for certain $v\in V$ (symmetric tensors of the form $v^{\otimes d}$ are often called ``completely decomposable tensors''). Hence if we indicate with $\mathbb {K}[x_0, \ldots , x_m]_d$ the vector space of homogeneous polynomials of degree $d$ in $m+1$ variables, and with $S^dV$ the subspace of symmetric tensors in $V^{\otimes d}$, then the Veronese variety $X_{m,d}\subset \mathbb{P}^{n_{m,d}}$ can be described both as $\{ [F]\in \mathbb{P}^{}(K[x_0, \ldots , x_m]_d) \, | \, \exists \, L\in K[x_0, \ldots , x_m]_1 \, \mathrm{ s.t. } \, F=L^d \}$ and as $\{[T]\in \mathbb{P}^{}(S^dV)\, | \, \exists \, v\in V \, \mathrm{ s.t. } \, T=v^{\otimes d}\}$.

A very classical problem coming from a number theory problem known as the Big Waring Problem, consists in determining the minimum integer $s$ for which a generic form $F\in K[x_0, \ldots , x_m]_d$  can be written as the sum of $s$ $d$-th powers of linear forms $L_1, \ldots , L_s \in K[x_0, \ldots , x_m]_1$:
\begin{equation}\label{Li}
F=L_1^d+ \cdots + L_s^d.
\end{equation}
 The same $s$ gives  the minimum integer for which the generic symmetric tensor $T\in S^{d}V$ can be written as a sum of $s$ completely decomposable tensors $v_1^{\otimes d}, \ldots , v_s^{\otimes d}\in S^dV$:
\begin{equation}\label{vi}
T=v_1^{\otimes d}+ \cdots + v_s^{\otimes d}.
\end{equation}
This problem was solved by J. Alexander and A. Hirschowitz  in \cite{ah1} (see also \cite{bo} for a modern proof).

Let $Y\subset \mathbb {P}^r$ be any integral and non-degenerate variety. Fix $P\in \mathbb {P}^r$. The $Y$-rank $r_Y(P)$ of $P$ is the minimal cardinality of a finite set $S\subset Y$ such that $P\in \langle S\rangle$. 

Let $\sigma_s(Y)\subset \mathbb{P}^n$ be the so called ``~higher $s$-th secant variety of $Y$~" (for brevity we will quote it only as the ``~$s$-th secant variety of $Y$~"):
\begin{equation}\label{sigma}
\sigma_s(Y):=\overline{\bigcup_{P_1, \ldots , P_s\in Y}\langle P_1, \ldots , P_s \rangle}.
\end{equation}
From this definition it turns out that a generic element of $\sigma_s(Y)$ has $Y$-rank equal to $s$, but obviously not all the elements of $\sigma_s(Y)$ have $Y$-rank equal to $s$ (except in some cases, like $s=1$ or $r= \dim (Y)+1$). 
For any $P\in \mathbb {P}^r$ the $Y$-border rank $b_Y(P)$ of $P$ is the minimal integer $s$ such that $P\in \sigma _s(Y)$. We have $b_Y(P) \le r_Y(P)$ and equality holds for a general point of each $\sigma _s(Y)$.
If $Y=X_{m,d}$ and $r = n_{m,d}$, then the integer
$r_{X_{m,d}}(P)$ is usually called the {\emph {symmetric tensor rank}} or {\emph {symmetric rank}} of $P$, while sometimes $b_{X_{m,d}}(P)$ is called either ``the secant rank of $P$" or the ``symmetric border rank of $P$''.

A natural question arising from the applications (see for example \cite{ACCF}, \cite{DM}, \cite{Ch}, \cite{dLC}, \cite{McC}, \cite{Co1}, \cite{JS}, \cite{l}) is: 
\begin{question}\label{Q}
Given a symmetric tensor $T\in S^{d}V$ (or a homogeneous polynomial $F\in \mathbb {K}[x_0, \ldots , x_m]_d$), which is the minimum integer $r$ for which we can write it as a linear combination  of $r$ completely decomposable tensors, i.e. as in (\ref{vi}) with $r=s$ (or as a linear combination  of $r$ $d$-th powers of linear forms, i.e. as in (\ref{Li}) with $r=s$)? 
\end{question}

Obviously answering to Question \ref{Q} for $T$ or $F$ is equivalent to finding the $X$-rank of the associated $P\in \mathbb{P}^n$, $n:= n_{m,d}$. Since $\mathbb{P}^n$ can be viewed either as the projective space of homogeneous polynomials of degree $d$ in $m+1$ variables or as the projective space of symmetric tensors of order $d$ over an $m+1$ dimensional vector space, an answer to Question \ref{Q} for a given $T$ or $F$ (resp. for all $T$ and $F$) is equivalent to compute $r_X(P)$ for the point $P\in \mathbb {P}^n$ associated to $T$ or $F$
(resp. for all $P\in \mathbb {P}^n$).

The answer to Question \ref{Q}, in the particular case of $m=1$, is known since Sylvester (\cite{cs}, \cite{lt}, Theorem 4.1, \cite{bgi}, \S 3). In that case the Veronese variety coincides with a rational normal curve. In \cite{bcmt} and \cite{cglm} one can find an algebraic theoretical algorithm for the general case with $m\geq 2$.

Both the Big Waring Problem and  Question \ref{Q} have a very interesting reformulation in Algebraic Geometry by using Linear Algebra tools. The authors of \cite{bgi} give some effective algorithms for the computation of the $X$-rank of certain kind of symmetric tensors by using this algebraic geometric interpretation. The advantage of those last algorithms is that they are effective and that they arise from an algebraic geometric perspective that gives the idea on how one can proceed in the study of the $X$-rank either of a form or of a symmetric tensor. Let us go into the details of that geometric description.

First of all, the definition (\ref{sigma}) of the secant varieties of the Veronese variety implies the following chain of containments:
\begin{equation}\label{chain}
X=\sigma_1(X)\subseteq \sigma_2(X)\subseteq \cdots \ \subseteq \sigma_{k-1}(X)\subseteq \sigma_{k}(X)=\mathbb{P}^n
\end{equation}
for certain natural number $k$. Therefore $\sigma_s(X)$ contains all the elements of $X$-rank less or equal than $s$. 
\\
Moreover the set
\begin{equation}\label{sigma0}
\sigma_s^0(X):=\bigcup_{P_1, \ldots ,P_s\in X}\langle P_1, \ldots , P_s \rangle
\end{equation}
is contained in $\sigma_s(X)$ and it is made by the elements $P\in \mathbb{P}^n$ whose $X$-rank is less or equal than $s$, hence the elements of $\sigma_s(X)\setminus (\sigma_{s-1}(X) \cup \sigma_s^0(X))$ have $X$-rank bigger than $s$.

What is done in \cite{bgi} is to start giving a stratification of $\sigma _s(X)\setminus \sigma _{s-1}(X)$ via the $X$-rank: in that paper the cases of $\sigma_2(X_{m,d})$ and $\sigma_3(X_{m,d})$ for any $m,d\geq 2$  are completely classified (among others). The authors give algorithms that produce the $X$-rank of an element of $\sigma_2(X_{m,d})$ and $\sigma_3(X_{m,d})$. 
\\
If we indicate
\begin{equation}\label{sigmasr}
\sigma_{s,r}(X):=\{P\in \sigma_s(X)\setminus \sigma _{s-1}(X) \, | \, r_X(P)=r\}\subset \sigma_s(X)\setminus \sigma _{s-1}(X)\subset \mathbb{P}^n,
\end{equation}
 then we can write the stratifications quoted above as follows:
\begin{itemize}
\item $\sigma_2(X_{m,d})\setminus X_{m,d}=\sigma_{2,2}(X_{m,d})\cup \sigma_{2,d}(X_{m,d}), \hbox{ for }m\geq 1 \hbox{ and } d\geq 2$
(cfr. \cite{cs}, \cite{bgi}, \cite{cglm}, \cite{bcmt});
\item $\sigma_3(X_{1,d})\setminus \sigma_2(X_{1,d})=\sigma_{3,3}(X_{1,d})\cup \sigma_{3,d-1}(X_{1,d}), \hbox{ for } d\geq 4$ (cfr. \cite{cs}, \cite{bgi}, \cite{cglm}, \cite{bcmt});
\item $\sigma_3(X_{m,3})\setminus \sigma_2(X_{m,3})=\sigma_{3,3}(X_{m,3})\cup \sigma_{3,4}(X_{m,3})\cup \sigma_{3,5}(X_{m,3}), \hbox{ for } m\geq 2 $ (see \cite[Theorem 40]{bgi});
\item $\sigma_3(X_{m,d})\setminus \sigma_2(X_{m,d})=\sigma_{3,3}(X_{m,d})\cup \sigma_{3,d-1}(X_{m,d})\cup \sigma_{3,d+1}(X_{m,d})\cup \sigma_{3,2d-1}(X_{m,d})$,  for  $m\geq 2$  and $d\geq 4$ 
(see \cite[\S 4]{bgi} or \cite{kl} for the cases $d=3,4$).
\end{itemize}

What we want to do in this paper is to give the analogous stratification for $\sigma_4(X_{m,d})$ for any $m,d\geq 2$. We will prove the following:

\begin{theorem}\label{theorem} The stratification of $\sigma_4(X_{m,d})\setminus \sigma_3(X_{m,d})$ via the $X_{m,d}$-rank is the following:
\begin{itemize}
\item[(a)] $\sigma_4(X_{1,d})\setminus \sigma_3(X_{1,d})=\sigma_{4,4}(X_{1,d})\cup \sigma_{4,d-2}(X_{1,d})$, if $d\geq 6$;
\item[(b)] $\sigma_4(X_{2,3})\setminus \sigma_3(X_{2,3})=\mathbb{P}^9\setminus \sigma_3(X_{2,3})=\sigma_{4,4}(X_{2,3});$
\item[(c)] $\sigma_4(X_{2,4})\setminus \sigma_3(X_{2,4})=\sigma_{4,4}(X_{2,4})\cup \sigma_{4,6}(X_{2,4})\cup \sigma_{4,7}(X_{2,4});$
\item[(d)] $\sigma_4(X_{2,5})\setminus \sigma_3(X_{2,5})=\sigma_{4,4}(X_{2,5})\cup\sigma_{4,5}(X_{2,5})\cup \sigma_{4,7}(X_{2,5})\cup \sigma_{4,8}(X_{2,5})\cup \sigma_{4,9}(X_{2,5});$
\item[(e)] $\sigma_4(X_{2,d})\setminus \sigma_3(X_{2,d})=\sigma_{4,4}(X_{2,d})\cup \sigma_{4,d-2}(X_{2,d})\cup\sigma_{4,d}(X_{2,d})\cup \sigma_{4,d+2}(X_{2,d})\cup \sigma_{4,2d-2}(X_{2,d})$,
if $d\geq 6$;
\red{
\item[(f)] $\sigma_4(X_{m,3})\setminus \sigma_3(X_{m,3})=\sigma_{4,4}(X_{m,3})\cup \sigma_{4,5}(X_{m,3})\cup\sigma_{4,6}(X_{m,3})\cup \sigma_{4,7}(X_{m,3})$, if $m\geq 3$;
}
\item[(g)] $\sigma_4(X_{m,4})\setminus \sigma_3(X_{m,4})=\sigma_{4,4}(X_{m,4})\cup\sigma_{4,6}(X_{m,4})\cup 
\sigma_{4,8}(X_{m,4})\cup \sigma_{4,10}(X_{m,4})$, if $m\geq 3$; 
\item[(h)] $\sigma_4(X_{m,5})\setminus \sigma_3(X_{m,5})=\sigma_{4,4}(X_{m,5})\cup \sigma_{4,5}(X_{m,5})\cup\sigma_{4,7}(X_{m,5})\cup\sigma_{4,8}(X_{m,5})\cup
\sigma_{4,10}(X_{m,5})\cup \sigma_{4,13}(X_{m,5})$, if $m\geq 3$;
\item[(i)] $\sigma_4(X_{m,d})\setminus \sigma_3(X_{m,d})=\sigma_{4,4}(X_{m,d})\cup \sigma_{4,d-2}(X_{m,d})\cup\sigma_{4,d}(X_{m,d})\cup\sigma_{4,d+2}(X_{m,d})\cup\sigma_{4,2d-2}(X_{m,d})\cup \sigma_{4,2d}(X_{m,d})\cup \sigma_{4,3d-2}(X_{m,d})$, if $m\geq 3$ and $d\geq 6$.
\end{itemize}
Moreover all listed $\sigma _{s,r}(X_{n,d})$ are non-empty.
\end{theorem}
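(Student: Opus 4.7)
The approach rests on the well-known fact that every $P\in \sigma_4(X_{m,d})\setminus \sigma_3(X_{m,d})$ lies in the linear span $\langle \nu_{m,d}(Z)\rangle$ for some zero-dimensional subscheme $Z\subset \mathbb {P}^m$ of length $4$. The plan is to choose $Z$ minimal (so that $P\notin \langle \nu_{m,d}(Z')\rangle$ for any proper subscheme $Z'\subsetneq Z$) and then to classify the possible scheme-theoretic structures of $Z$ up to projective equivalence. For each such type, the $X_{m,d}$-rank of a general point of $\langle \nu_{m,d}(Z)\rangle$ is computed directly, and the rank values listed in (a)--(i) will correspond bijectively with the types of $Z$ that can actually arise for the given values of $m$ and $d$.

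The main technical tools are Sylvester's theorem for the rational normal curve (which assigns rank $d+2-\ell$ to a general point in the span of a curvilinear length-$\ell$ scheme on a line), the apolarity lemma (used to bound ranks from below via the Hilbert function of an apolar scheme), and the earlier stratifications of $\sigma_2(X_{m,d})$ and $\sigma_3(X_{m,d})$ recalled in the introduction, which cover every subconfiguration of size $\leq 3$. The crucial reduction is that when $Z$ spans a proper linear subspace $\Lambda \subset \mathbb {P}^m$ of dimension $k$, the point $P$ lies in the sub-Veronese $\nu_{k,d}(\Lambda)\subset X_{m,d}$, so the rank problem becomes intrinsic to $\mathbb {P}^k$ with $k\le 3$.

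The case analysis is organized by the dimension of $\langle Z\rangle$ and the local structure of $Z$ at each point of its support. Four distinct reduced points in general linear position give rank $4$; curvilinear schemes of length $4$ supported on a line give rank $d-2$ by Sylvester; configurations involving one or more length-$2$ and length-$3$ jets distributed on lines, planes, or (only when $m\ge 3$) on a $3$-space account for the intermediate values $d,\ d+2,\ 2d-2,\ 2d,\ 3d-2$ appearing in the theorem. For each admissible type of $Z$, I will exhibit an explicit decomposition as a sum of powers of linear forms to obtain the upper bound on $X_{m,d}$-rank and simultaneously to prove non-emptiness of the corresponding stratum $\sigma_{s,r}(X_{m,d})$; the matching lower bound is obtained by apolarity, using the minimality of $Z$ to forbid strictly shorter decompositions.

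The most delicate part of the proof will be the small-degree cases (b)--(h) with $d\le 5$, where several scheme types produce the same rank (causing strata to merge) or where certain secant varieties degenerate (as in case (b), where $\sigma_3(X_{2,3})=\mathbb {P}^9$). In these cases a purely dimensional argument is insufficient, and one must perform ad hoc analyses of the Hilbert function of $Z$ and of the residual schemes arising in the apolar ideal of $P$. The principal technical difficulty throughout is to exclude the intermediate rank values, such as ruling out rank $d+1$ in case (e) or rank $2d-1$ in case (i): this requires showing that any strictly shorter decomposition would force $P\in \sigma_3(X_{m,d})$ or into a previously-classified smaller stratum, contradicting the choice of $Z$ as a minimal apolar scheme.
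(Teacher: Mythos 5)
Your overall strategy coincides with the paper's: pick a minimal length-$4$ scheme $Z$ with $P\in\langle\nu_{m,d}(Z)\rangle$, reduce to $\langle Z\rangle$ (so to $m\le 3$), classify the local structure of $Z$, and compute the rank type by type, with Sylvester supplying the upper bounds on rational normal curves. However, there is a genuine gap in how you propose to obtain the lower bounds. You assert that "the matching lower bound is obtained by apolarity, using the minimality of $Z$ to forbid strictly shorter decompositions," but this is precisely the part of the argument that cannot be dispatched in one line. The paper's mechanism is different and much more involved: if $B$ is a reduced scheme computing $r_X(P)$ and $A$ computes the border rank, then $h^1(\mathcal{I}_{A\cup B}(d))>0$ (Lemma 5, from [bb]), and one then runs residual exact sequences of $A\cup B$ with respect to carefully chosen lines and planes, invoking Eisenbud--Harris on linearly general position and the structure result of [bb] (Lemma 7) that forces $A\cup B$ to decompose along a line meeting it in degree $\ge d+2$. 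For the hardest strata --- $r=2d$ (two disjoint jets spanning $\mathbb{P}^3$, Proposition 8) and especially $r=3d-2$ (a curvilinear length-$4$ scheme supported at one point and spanning $\mathbb{P}^3$, Proposition 9) --- the paper needs a multi-stage induction on residuals $E_{i+1}=\mathrm{Res}_{M_i}(E_i)$ tracking the degrees $x_i=\deg(M_i\cap E_i)$ and ruling out each value of the critical index $z$ separately. "Minimality of $Z$" alone does not forbid, say, a decomposition of length $3d-3$; one must actually show that such a $B$ would force $\deg(A\cup B)$ or the incidence with lines to be contradictory. Your proposal does not indicate how apolarity would reproduce these exclusions, and in particular how you would rule out the intermediate values $2d-1$ and $3d-3,\dots$ in case (i).

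A second, smaller but still real, omission: the opening "well-known fact" that every $P\in\sigma_4\setminus\sigma_3$ lies in the span of a length-$4$ scheme is not free; the paper gets it from [bgl] (Lemmas 2.1.5 and 2.4.4), valid because $b_X(P)=4\le d+1$, and moreover reduces to \emph{smoothable Gorenstein} schemes. This matters for the classification itself: the non-curvilinear, non-Gorenstein degree-$4$ schemes (e.g.\ those of Lemma 1(ii), or the first infinitesimal neighborhood of a point) must be discarded or shown to land in $\sigma_2$ or $\sigma_3$, and conversely one must check that each remaining type really produces points of border rank exactly $4$ (needed for the non-emptiness claim). Your plan to "exhibit an explicit decomposition" gives upper bounds and witnesses, but does not by itself show that the witnesses avoid $\sigma_3(X_{m,d})$, which is what non-emptiness of $\sigma_{4,r}$ requires.
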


The case of the rational normal curve, i.e.  item (a), is due to Sylvester and done in \cite{cs}, \cite{cglm}, \cite{bcmt}, \cite[Theorem 4.1]{lt},  and \cite[\S 3]{bgi}.
\\
The cases of the Veronese surfaces in degrees $3$ and $4$, i.e. items (b) and (c), are done in \cite[Theorems 40, 44]{bgi} respectively and in \cite{kl}.
\\
We complete the case of the Veronese surfaces (items (d) and (e)) in the Subsection \ref{m2}.
\\
In the Subsection \ref{m3} we will give the stratification of $\sigma_4(X_{3,d})$, with $d\geq 3$, that will be the same stratification for any $m\geq 3$ (items (f), (g), (h) and (i)).

\begin{remark} During the Spring Semester 2001 ``Algebraic Geometry with a view towards applications" at the Institut Mittag-Leffler (Sweeden), J. H. Rhodes stated the following:

\textbf{Conjecture 1 (J. H. Rhodes):} \emph{The maximal $X_{m,3}$-rank of a homogeneous polynomial of degree 3 in $m+1$ variables   belonging to $\sigma_{m+1}(X_{m,3})$ is 2(m+1)-1.} 

We remark that the original version of this conjecture was about tensors not necessarily symmetric. For sake of completeness we quote the first version of it but we don't enter into details. 

\textbf{Conjecture 0 (J. H. Rhodes):} \emph{The maximal rank of an $(m+1) \times (m+1) \times (m+1)$ tensor of border rank $m+1$ is $2(m+1)-1$.}

Conjecture 1 is known to be true for $m=1, 2$ ($m=1$ is classical from Sylvester, and $m=2$ is \cite[Theorem 40]{bgi}).
We like to observe here that the case (f) of our Theorem \ref{theorem} shows that  this conjecture is true  also for $m=3$. Moreover from the proof of our Theorem \ref{theorem}, it is also possible to describe the structure of the schemes that evince the $X_{m,3}$-border rank of the degree 3 polynomials of maximal $X_{m,3}$-rank (they are all described by Proposition \ref{3bp}). Finally, putting $d=3$ and $m=3$, one can use our Remark \ref{3b} to produce algorithmically several sets of points evincing $r_{X_{3,3}}(P)$ for points $P\in \sigma_{4}(X_{3,3})$ of maximal $X_{3,3}$-rank.
\end{remark}

Before going into the details of the proof we need some preliminary and auxiliary sections. In Section \ref{S1} we present  the construction that will allow to associate two different 0-dimensional schemes of $\mathbb{P}^m$ to two 0-dimensional sub-schemes of $X_{m,d}$ realizing the $X_{m,d}$-border rank and the $X_{m,d}$-rank of a point $P\in \mathbb{P}^{n_{m,d}}$. We also discuss for which 0-dimensional schemes $A \subset \mathbb {P}^m$ of degree $4$ a general point of $\langle \nu _{m,d}(A)\rangle$
belongs to $\sigma _4(X_{m,d})\setminus \sigma _3(X_{m,d})$.
In Section \ref{S2} we give bounds for the $Y$-rank of a point with respect to some particular projective curves $Y\subset \mathbb{P}^t$ that will be used in the proof of the Theorem \ref{theorem}. Section \ref{S3} is made by preliminary lemmas on the linear dependence of the pre-image via the Veronese map $\nu_{m,d}$ of the 0-dimensional schemes realizing the $X$-rank and the $X$-border rank of a point $P\in \mathbb{P}^{n}$. Finally in Section \ref{S4} we collect all the previous results into the proof of Theorem \ref{theorem}.

Moreover we will describe case by case how to find the scheme that realizes the $X$-rank of a point $P$ (modulo the scheme that realizes the $X$-border rank). This allows us to give many informations on the subset $\sigma_{s,r}(X)\subset \sigma_{s}(X)$ defined in (\ref{sigmasr})
and to construct all $P\in \sigma _{4,s}(X)$ by playing with certain 0-dimensional schemes.

We like to stress here that the defining ideals of $\sigma_2(X_{1,d})$ and $\sigma_3(X_{2,d})$ are known (see \cite{k} and \cite{o} respectively) and this allows the authors of \cite{bgi} to give algorithms for the $X$-rank of points in $\sigma_s(X)$ with $s=2,3$. Given an element $P\in \mathbb{P}^n$ they can firstly check if its $X$-border rank is actually either $2$ or $3$, and then they can produce the algorithm for the computation of the $X$-rank of $P$. Unfortunately, at our knowledge, equations defining $\sigma_4(X_{3,d})$ at least set-theoretically are not known yet, therefore we could write algorithms for the $X$-rank of an element  $P\in\sigma_4(X)$ only if we already knew in some way that $b_X(P)=4$. 

\section{Preliminaries}\label{S1}

In this paper we want to study the $X$-rank of the points $P$ belonging to the fourth secant variety of the Veronese variety $X$, i.e. $P\in\sigma_4(X)$. By the chain of containments (\ref{chain}) we have that $\sigma_3(X)\subseteq \sigma_4(X)$. Since the stratification of $\sigma_3(X)$ via the $X$-rank is already known by \cite{bgi}, it is sufficient to understand the $X$-rank of points $P\in \sigma_4(X)\setminus \sigma_3(X)$.

Moreover the definition (\ref{sigma0}) of $\sigma_s^0(X)$ implies that if $P\in \sigma_4^0(X)$ then $r_X(P)\leq 4$, hence, for the purpose of this paper, it is sufficient  to study the $X$-rank of points belonging to $\sigma_4(X)\setminus (\sigma_3(X)\cup \sigma_4^0(X))$.
Before starting our construction by taking $P\in\sigma_4(X)$ we introduce the following
Remark \ref{Z++} that, for such a point $P$, gives the existence of a 0-dimensional
scheme $Z \subset X$ of degree $4$ such that $P\in \langle Z \rangle$ and $P\notin\langle Z'\rangle$ for any $Z'\subset X$ with $\deg(Z')<\deg(Z)$.

\begin{remark}\label{Z++}
Fix integers $m \ge 1$, $d \ge 2$ and $P\in \mathbb {P}^n$ such that $b_X(P) \le d+1$. By \cite{bgl},
Lemma 2.1.5 and Lemma 2.4.4, there is a smoothable 
0-dimensional and Gorenstein scheme $\mathcal{E} \subset X_{m,d}$ such that $\deg (\mathcal{E}) =
b_X(P)$,  $P\in \langle \mathcal{E}\rangle$ and $P\notin \langle \mathcal{E}'\rangle$ for any $\mathcal {E}'\subsetneq \mathcal {E}$.
\end{remark}

Before entering into the details of our construction we need to distinguish the Gorenstein cases from the non Gorenstein ones that we won't have to treat thanks to \cite{bgl}.

\begin{lemma}\label{oo1}
Let $U$ be smooth quasi-projective surface and fix $O\in U$. Let $2O$ be the 0-dimensional subscheme of $U$ with $(\mathcal {I}_{O,U})^2$ as its ideal sheaf. Let $E\subset U$ be another  0-dimensional scheme supported at $O$ with  $\deg (E) = 4$. Assume that $E$ is Gorenstein, but not curvilinear.
The latter condition is equivalent to requiring $2O \subseteq E$. Let $\mathbb {K}[[x,y]]$ be the completion of $\mathcal {O}_{U,O}$ and let $J\subset \mathbb {K}[[x,y]]$ be an ideal such
that $\mathcal {O}_E \cong \mathbb {K}[[x,y]]/J$. Then $J$ is generated by $L_1^2$ and $L_2^2$ with $L_1, L_2\in \mathbb {K}[x,y]_1$.
\end{lemma}

\begin{proof}
Since $\dim (U) =2$, $U$ is smooth and $E_{red} = \{O\}$, $E$ is not curvilinear if and only if $2O \subseteq E$. Since $\mathbb {K}[[x,y]]$ is a regular local ring and $J$ is a codimension $2$ Gorenstein ideal,
$J$ is a complete intersection (\cite[Corollary 21.20]{e}). Take generators $f_1, f_2$ of $J$ and call $Q_i(x,y)$ the first non-zero term of the Taylor expansion of $f_i$. Set $\mu _i:= \deg (Q_i(x,y))$.
Since $E$ is not curvilinear, we have $\mu _i \ge 2$ for all $i$. Since $\deg (E) = 4 = 2\cdot 2$, we have $\mu _1 = \mu _2 =2$ and $Q_1, Q_2$ have no common factor (Property (5) of \cite[\S 3.3]{f1}).
See $x, y$ as homogeneous coordinates of $\mathbb {P}^1$. The pencil $\lambda Q_1+\mu Q_2$, $(\lambda ,\mu )\in \mathbb {K}^2\setminus \{(0,0)\}$, has at least one singular element. Hence, up to a linear change of coordinates,
we may assume $Q_1 = x^2$ and write $Q_2 =  axy +by^2$. Since $Q_1$ and $Q_2$ have no common factor, we have $b\ne 0$. Set $L_1:= x$ and $L_2:= y+ax/2b$. Since $(Q_1, Q_2) = (L_1^2,L_2^2)  = (L_1^2,L_2^2,x^3,x^2y,xy^2,y^3) = (Q_1,Q_2,x^3,x^2y,xy^2,y^3) \supseteq J$
and $\mathbb {K}[[x,y]]/(L_1^2,L_2^2)$ is a $4$-dimensional $\mathbb {K}$-vector space,
we get $J = (L_1^2,L_2^2)$.\end{proof}

\begin{remark}\label{ee4} As already observed, thanks to \cite[Lemma 2.4.4]{bgl},
in the list of all cases potentially appearing in Theorem \ref{theorem} we will
only need to check the Gorenstein schemes $A \subset \mathbb {P}^m$ with $\deg (A) = 4$. The scheme $A$ is Gorenstein if and only if every connected
component of $A$ is Gorenstein. Let $E$ be a connected 0-dimensional scheme of degree $s\le 4$. If $s\le 3$, then it is Gorenstein if and only if it is curvilinear and it is one
of the ones appearing in \cite[\S 4]{bgi} in the description of $\sigma _s(X)$. Now assume $s=4$. Let $e$ be the embedding dimension of $E$, i.e. set $ e:= \dim _{\mathbb {K}}({\bf {m}}_E/{\bf {m}}_E^2)$, where
${\bf {m}}_E$ is the maximal ideal of the local ring $\mathcal {O}_E$. Since $\mathcal {O}_E/{\bf {m}}_E = \mathbb {K}$, we have $e \le \deg (E) -1 = 3$ and equality holds if and only if ${\bf {m}}_E^2=0$, i.e. if and only
if $E$ is isomorphic to the scheme $\mathbb {K}[x,y,z]/(x,y,z)^2$. Since the latter scheme is not Gorenstein, it is sufficient to look at the case $e\le 2$. If $e=1$, then $E$ is Gorenstein, because every effective divisor of a smooth
curve is a locally complete intersection and any local complete intersection is Gorenstein (\cite[Corollary 21.29]{e}). The case $e=2$ is described in Lemma \ref{oo1} .

If $d \ge 7$ the scheme evincing the border rank is unique, because the union of two such schemes has degree at most $8$ and we may apply \cite[Lemma 1]{bb} (stated in Section \ref{S2} as Lemma \ref{v1}) to these schemes instead of $A$ and $B$. Hence each $P\in \sigma _4(X)$ is associated to a unique $Z$ and $\langle \nu _{m,d}(A_1)\rangle \cap \langle \nu _{m,d}(A_2)\rangle =\langle \nu _{m,d}(A_1\cap A_2)\rangle$
for all $A_i \subset \mathbb {P}^m$ with $\deg (A_i)\le 4$, $i=1,2$. Only if $d\ge 7$ we claim that a general $P\in \langle Z\rangle$ belongs $\sigma _4(X)\setminus \sigma _3(X)$.
\end{remark}

\begin{definition}\label{Z}
Let $P\in \sigma _4(X)\setminus \sigma _3(X)$; we  say that a 0-dimensional scheme $Z\subset X$ such that $\deg (Z) = 4$ and $P\in \langle Z \rangle$ ``~evinces the $X$-border rank of $P$~". 
\end{definition}

\begin{notation}\label{Z} If $P\in \sigma_4(X)\setminus (\sigma_3(X)\cup \sigma_4^0(X))$, we fix $Z\subset X$ to be one of the degree $4$ non-reduced 0-dimensional schemes that evince the $X$-border rank of $P$, i.e. $P\in \langle Z \rangle$ and $P\notin \langle Z' \rangle$ for all 0-dimensional schemes $Z'\subset X$ of degree less or equal than $3$. If $Z = \nu _{m,d}(A)$, then we also say (with an abuse of notation) that $A$ evinces the $X_{m,d}$-border rank of $P$.
\end{notation}

In order to study the stratification of $\sigma_4(X_{m,d})$ it is therefore necessary to understand the $X_{m,d}$-rank of the points belonging to the span of a non reduced 0-dimensional subscheme $Z\subset X_{m,d}$ of degree $4$. Clearly, for such a degree 4 scheme $Z$ we have that $\dim(\langle Z \rangle)\leq 3$. By \cite[Proposition 3.1]{ls}, or \cite[Subsection 3.2]{lt}, \cite[Remark 31]{bgi}, it is sufficient to do the cases $m=2,3$.  
The stratification of $\sigma_4(X_{1,d})$ is already known by \cite{cs} and \cite{bgi}. Hence it remains to study the stratification of $\sigma_4(X_{2,d})$ for $d\geq 5$ (in fact \cite{bgi} and \cite{kl} give it  for the cases $d=3,4$),
and the stratification of $\sigma_4(X_{m,d})$ for $m\geq 3$. What the already quoted results in \cite{bgi}, \cite{lt} and \cite{ls} allow to do is that, once we will have the stratification of $\sigma_4(X_{3,d})$ then we will straightforwardly have that the same stratification will hold for $\sigma_4(X_{m,d})$ for the same $d$ and $m\geq 3$.

By the discussion above we may  assume that the scheme $A$  defined in Notation \ref{Z} is not contained in a 2-dimensional projective subspace of $\mathbb{P}^m$. In fact, if $\deg(A)=4$ then $\langle A\rangle \subseteq \mathbb{P}^3$, but if $\langle A \rangle \subseteq \mathbb{P}^2$ then there exist a 0-dimensional scheme $B\subset \mathbb{P}^2$ of degree $3$ such that $\langle A \rangle \subseteq \langle B \rangle =\mathbb{P}^2$. This would imply that any point $P\in \langle\nu_{m,d}( A )\rangle$ belongs to $\langle\nu_{m,d}( B )\rangle$ for some 0-dimensional scheme $B\subset \mathbb{P}^m$ of degree 3. Now, since $\deg(B)=3$ then $\langle\nu_{m,d}( B )\rangle\subset\sigma_3(X)$. Therefore if $\langle A \rangle \subset \mathbb{P}^2$ we get that, if $Z=\nu_{m,d}(A)$, any point $P\in \langle Z \rangle$ belongs to $\sigma_3(X)$, but we want to study the $X$-rank of the points $P\in \sigma_4 (X)\setminus \sigma_3(X)$. Therefore we assume that the scheme $A\subset \mathbb{P}^m$ defined in Notation \ref{Z} spans a projective subspace of dimension $3$.

\begin{notation}\label{S} Let $P\in \sigma_4(X_{m,d})\setminus (\sigma_3(X_{m,d}) \cup \sigma_4^0(X_{m,d}))$.
We fix a finite set $S \subset X_{m,d}$ that
evinces the $X_{m,d}$-rank of $P$, i.e. $S\subset X_{m,d}$ is a finite set such that $P\in \langle S \rangle$ and $\sharp (S) =r_{X_{m,d}}(P)$ . Let $B\subset \mathbb {P}^m$ be the only set such that $\nu _{d,m}(B)=S$. We also say that
$B$ evinces the $X_{m,d}$-rank of $P$.
\end{notation}

This notation allows us to use many results on the reduced and non-reduced 0-dimensional schemes in $\mathbb{P}^m$ and translate them into informations on the 0-dimensional sub-schemes of $X_{m,d}$.

\section{Useful reducible curves}\label{S2}
Let $Y$ be a projective non-degenerate reduced curve obtained by the union of two rational normal curves $Y_1$, $Y_2$. We prove here two propositions on the $Y$-rank of points belonging to $\langle Z \rangle$ where $Z$ is a degree 4 non-reduced 0-dimensional sub-scheme of $Y$.
\\
We'd like to point out that in Proposition \ref{a1}  
we will only give an upper bound for the $Y$-rank of certain points, but in Section \ref{S4} it will be proved that
this upper bound is the actual value of the $Y$-rank
(Corollary \ref{ca1}).

\begin{proposition}\label{a1}
Fix an integer $d\ge 3$. Let $Y\subset \mathbb {P}^{2d}$ be a reduced and connected curve union of two smooth
degree $d$ curves $Y_1$, $Y_2$, each of them a rational normal curve in its linear span, with a unique
common point, $Q$, and with $\langle Y\rangle = \mathbb {P}^{2d}$. Let $Z\subset Y$ be a
0-dimensional scheme such that $\deg (Z)=4$, $Z_{red}= \{Q\}$, $Z$ is a Cartier divisor of $Y$ and $\deg(Z\cap Y_i)\geq 2$ for $i=1,2$. Fix
$P\in \langle Z\rangle$ such that $P\notin \langle Z'\rangle$ for any $Z'\subsetneq Z$. Then:
\begin{equation}\label{a1i}
r_Y(P)\leq 2d-2
\end{equation} 
and
there is a reduced 0-dimensional sub-scheme $S\subset Y$ such that $P\in \langle S\rangle$, $Q\notin S$, $\sharp (S) =2d-2$, and $\sharp (S\cap Y_i)=d-1$ for $i=1,2$. We may
find $S$ as above and not intersecting any finite prescribed subset of $Y$. 
\\
If $d\ge 4$,
then for a general pair of sets of points $(A_1,A_2) \subset Y_1\times Y_2$ such that $\sharp (A_1)=\sharp (A_2) =d-3$ there is $S$ as above with
the additional property that $A_1\cup A_2\subset S$.
\end{proposition}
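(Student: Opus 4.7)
The plan is to decompose $P$ along the two components of $Y$ and then apply Sylvester's theorem on each rational normal curve $Y_i$ separately. Set $T_i:=\langle Z\cap Y_i\rangle$, the tangent line to $Y_i$ at $Q$ (since $\deg(Z\cap Y_i)=2$). A local computation at the node shows that the scheme-theoretic union $(Z\cap Y_1)\cup(Z\cap Y_2)$ has length $3<4=\deg Z$, so $T_1+T_2=\langle(Z\cap Y_1)\cup(Z\cap Y_2)\rangle$ is a proper subspace of $\langle Z\rangle$. The hypothesis $P\notin\langle Z'\rangle$ for every $Z'\subsetneq Z$ therefore forces $P\notin T_1+T_2$.

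Since $\langle Y_1\rangle\cap\langle Y_2\rangle=\{Q\}$ and $\langle Y_1\rangle+\langle Y_2\rangle=\mathbb{P}^{2d}$, lifting to affine cones yields a one-parameter family of decompositions $\tilde{P}=\tilde{P}_1(\lambda)+\tilde{P}_2(\lambda)$ with $\tilde{P}_i(\lambda)\in\langle Y_i\rangle$; projectively, $P_1(\lambda)$ traces a line $\ell_1\subset\langle Y_1\rangle\cong\mathbb{P}^d$ through $Q$, and similarly for $\ell_2$. Using that $\langle Z\rangle\cap(T_1+\langle Y_2\rangle)=T_1+T_2$, the condition $P\notin T_1+T_2$ forces $\ell_1\neq T_1$, and symmetrically $\ell_2\neq T_2$. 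For $d\geq 3$ one has $\sigma_{d-1}(Y_i)=\langle Y_i\rangle$, and Sylvester's theorem on binary forms shows that the locus $C_i\subset\langle Y_i\rangle$ of points of $Y_i$-rank exactly $d$ is contained in the tangent developable $\tau(Y_i)$, a proper closed surface whose unique line through $Q$ is $T_i$. Since $\ell_i\neq T_i$, generic $\lambda$ yields $P_i(\lambda)\notin C_i$, i.e. $r_{Y_i}(P_i(\lambda))\leq d-1$. Picking rank-$(d-1)$ representations $S_i(\lambda)\subset Y_i\setminus\{Q\}$ (padded with extra generic points if the rank is smaller) and setting $S:=S_1(\lambda)\cup S_2(\lambda)$, one gets $|S|=2d-2$ and $P\in\langle P_1(\lambda),P_2(\lambda)\rangle\subseteq\langle S\rangle$, proving (\ref{a1i}); the family of admissible $S$'s has positive dimension, so $S$ can be arranged to miss any prescribed finite subset of $Y$.

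For the last assertion, with $d\geq 4$ and general $A_i\subset Y_i\setminus\{Q\}$ of cardinality $d-3$, I would project $\langle Y_i\rangle$ from the $(d-4)$-plane $\langle A_i\rangle$ to $\mathbb{P}^3$: the image $\pi_{A_i}(Y_i)$ is a twisted cubic, and $\pi_{A_i}(P_i(\lambda))$ lies in $\mathbb{P}^3=\sigma_2(\pi_{A_i}(Y_i))$. For generic $(\lambda,A_i)$ the image misses the tangent developable of the twisted cubic (a proper subvariety of $\mathbb{P}^3$), so it has rank exactly $2$; writing it as $R_1'+R_2'$ and lifting yields $R_1,R_2\in Y_i$ so that $S_i:=A_i\cup\{R_1,R_2\}$ has size $d-1$ and $P_i(\lambda)\in\langle S_i\rangle$. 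The main obstacle will be verifying that the several open-dense genericity conditions---$\ell_i\neq T_i$, $P_i(\lambda)\notin C_i$, and $\pi_{A_i}(P_i(\lambda))$ off the tangent developable---can be simultaneously met while also imposing $A_1\cup A_2\subset S$ and avoiding any prescribed finite subset; this reduces to the non-degeneracy of $P$ in $\langle Z\rangle$ together with the fact that $Q\in Y_i$ has $Y_i$-rank $1$.
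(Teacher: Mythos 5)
Your strategy coincides with the paper's: the paper projects from $P$ and then from the line $D=\langle \ell_P(Y_1)\rangle\cap\langle\ell_P(Y_2)\rangle$, and a point of $D$ is exactly a choice of your parameter $\lambda$ in the splitting $P=P_1(\lambda)+P_2(\lambda)$; both arguments then run Sylvester on each $Y_i$ and, for $d\ge4$, reduce to $d=3$ by projecting from the prescribed points. Your preparatory steps are correct: $\deg(Z\cap Y_i)=2$ forces the union of the two traces to have length $3$, so $P\notin T_1+T_2$, and the identity $\langle Z\rangle\cap(T_1+\langle Y_2\rangle)=T_1+T_2$ does give $\ell_i\neq T_i$.

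The gap is the sentence ``Picking rank-$(d-1)$ representations $S_i(\lambda)\subset Y_i\setminus\{Q\}$.'' Avoiding $Q$ is \emph{not} one of your open-dense genericity conditions in $\lambda$: writing $P_1(\lambda)=[F_\lambda]$ with $F_\lambda=F_0+\lambda\,\ell_Q^d$, the question of whether every degree-$(d-1)$ apolar form of $F_\lambda$ vanishes at $Q$ is controlled by $u_Q\circ F_\lambda=u_Q\circ F_0$ (here $u_Q\circ\ell_Q^d=0$), hence is independent of $\lambda$. Concretely, if $\ell_1$ were a secant line $\langle Q,R\rangle$ with $R\in Y_1\setminus\{Q\}$, then $(F_\lambda^{\perp})_{d-1}=u_Qu_R\cdot\mathbb{K}[u,v]_{d-3}$ for every $\lambda$, so every reduced degree-$(d-1)$ decomposition of every point of $\ell_1$ contains $Q$, and neither varying $\lambda$ nor padding rescues the construction. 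You rule out $\ell_i=T_i$ but not $\ell_i$ being a secant line through $Q$, and the latter does not reduce to ``non-degeneracy of $P$ in $\langle Z\rangle$'': it is a separate geometric fact about $\langle Z\rangle$, namely that $\langle Z\rangle\cap\langle\{R\}\cup\langle Y_2\rangle\rangle=T_2$ for all $R\in Y_1\setminus\{Q\}$ (in adapted coordinates $\langle Z\rangle$ is cut out by $z_3=\dots=z_d=w_3=\dots=w_d=0$ and one equation $\alpha z_2+\beta w_2=0$, which forces this). This is the counterpart of the paper's assertion that $D$ meets $\ell_P(Y_i)$ only at $\ell_P(Q)$, and it is the step you must supply. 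The remaining claims you leave implicit (the only lines on the tangent developable are tangent lines; once the tangent and secant cases are excluded a general degree-$(d-1)$ apolar form is squarefree and does not vanish at $Q$) are true and routine.
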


\begin{proof}
\quad (a) First assume $d=3$. Let $\ell _P: \mathbb {P}^6\setminus \{P\}\to \mathbb {P}^5$ be the linear
projection from $P$. Since $P\notin Y$, then the map $\ell _P\vert Y$ is obviously a morphism. Since
$P\notin \langle Y_i\rangle$, each curve $C_i:= \ell _P(Y_i)$, $i=1,2$, is a rational normal curve in its $3$-dimensional
linear span. Since $\langle Y_1\rangle \cap \langle Y_2\rangle = \{Q\}$, the linear space $D:= \langle C_1\rangle \cap
\langle C_2\rangle$ has dimension at most $1$. Since $\dim (\langle C_i\rangle )=3$ for all $i$, we have $\dim (D)=1$.
Now the 0-dimensional sub-scheme $Z\subset Y$ is, by hypothesis, such that $P\in \langle Z\rangle$ and $P\notin \langle Z' \rangle$ for any proper sub-scheme $Z' \subset Z$. If $P\in \langle T_QY_1 \cup T_QY_2 \rangle$ we would have that $P$ belongs to the span of a proper sub-scheme of $Z$ of degree $3$ (in fact $\dim (\langle T_QY_1 \cup T_QY_2 \rangle )=2$, because $Q\in Y_1\cap Y_2$, while $\dim (\langle Z\rangle )=3$ because $d \ge 3$) that contradicts the hypothesis. 
Since $P\in \langle Z\rangle \setminus \langle T_QY_1\cup T_QY_2\rangle$, the line $D$ is not tangent either to $C_1$ or to $C_2$, but it intersects each $C_i$ only at their common point
$\ell _P(Q)$. Hence the linear projection from $D$ induces a degree $2$ morphism $\psi _i: C_i\to \mathbb {P}^1$. Thus,
for a general $O\in D$, there are two sets of points $B_i\subset C_i$ such that $\sharp (B_i)=2$
and $O\in \langle B_i\rangle$, for $i=1,2$. Let $S_i\subset Y_i$ be the only set of points such that $\ell _P(S_i) = B_i$ for $i=1,2$. Since
$\dim (\langle \ell _P(S_1\cup S_2)\rangle )=2$, we have $\dim (\langle \{P\}\cup S_1\cup S_2\rangle )=3$. We easily
find $O\in D$ such that $\dim (\langle S_1\cup S_2\rangle )=3$ and $Q\notin S_1\cup S_2$. Hence $P\in \langle S_1\cup
S_2\rangle$. We can then take $S:=S_1\cup S_2$ as a solution for $d=3$.

\quad (b) Now assume $d\ge 4$. Take a general pair of sets of points $(A_1,A_2) \subset Y_1\times Y_2$ such that $\sharp (A_1)=\sharp (A_2) =d-3$.
Let $\ell :\mathbb {P}^{2d}\setminus \langle A_1\cup A_2\rangle \to \mathbb {P}^6$ denote the linear projection from $\langle A_1\cup
A_2\rangle$.
Apply Step (a), i.e. the case $d=3$, to the curve $Y' \subset \mathbb {P}^6$ which is the closure
of $\ell (Y\setminus Y\cap \langle A_1\cup A_2\rangle )$. Let
$S_1\cup S_2$ be a solution for $Y'$ with respect to the point $\ell (P)$. For general $O\in D$ (as in Step (a))
we may find $S_1\cup S_2$ not through the finitely many points of $Y'$ which are in $Y'\setminus \ell (Y\setminus (A_1\cup
A_2))$. Hence there are unique
$B_i\subset Y_i$ such that $\ell (B_i)=S_i$ for $i=1,2$. Set $S:= B_1\cup B_2\cup A_1\cup A_2$. 
\end{proof}

In Corollary \ref{ca1} we will show that (\ref{a1i}) is an equality.

\begin{proposition}\label{a2}
Fix an integer $d\ge 3$. Let $Y\subset \mathbb {P}^{2d+1}$ be a reduced curve union of two smooth
degree $d$ curves $Y_1$, $Y_2$, each of them a rational normal curve in its linear span and such that $\langle Y_1\rangle \cap \langle Y_2\rangle =\emptyset$.
Fix $P_i\in Y_i$ for $i=1,2$. Let $Z_i\subset Y_i$ be the degree $2$ effective Cartier divisor $2P_i$ of $Y_i$, $i=1,2$. Set $Z:= Z_1\cup Z_2.$
Fix $P\in \langle Z\rangle$ such that $P\notin \langle Z'\rangle$ for all $Z'\subsetneq Z$. Then $b_Y(P)=4$, $Z$ is the only subscheme of $Y$ evincing $b_Y(P)$, 
$$r_Y(P) = 2d,$$
and $\sharp (S\cap Y_1) = \sharp (S\cap Y_2)=d$ for all reduced 0-dimensional sub-schemes $S\subset Y$ evincing $r_Y(P)$.
\end{proposition}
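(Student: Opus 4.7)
The plan is to exploit the disjointness hypothesis $\langle Y_1\rangle\cap\langle Y_2\rangle=\emptyset$: every $0$-dimensional subscheme $W\subset Y$ splits uniquely as $W=W_1\cup W_2$ with $W_i\subset Y_i$, and since $\langle W_1\rangle$ and $\langle W_2\rangle$ are disjoint projective subspaces, $\langle W\rangle$ is their (linearly independent) join. Consequently every $Q\in \langle W\rangle$ decomposes uniquely as a linear combination of a point $Q_1\in\langle W_1\rangle$ and a point $Q_2\in\langle W_2\rangle$, and the problem reduces to two independent Sylvester-type statements on the rational normal curves $Y_i\subset \langle Y_i\rangle\cong \mathbb{P}^d$.

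First I would write the given point as $P=P_1'+P_2'$ with $P_i'\in\langle Z_i\rangle$, i.e.\ $P_i'$ on the tangent line to $Y_i$ at $P_i$. The minimality assumption on $Z$ forces $P_i'\ne P_i$: otherwise $P$ would lie in $\langle \{P_i\}\cup Z_{3-i}\rangle$, which is the span of a proper subscheme of $Z$. Hence $P_i'$ is a point on the tangent line to $Y_i$ at $P_i$, different from $P_i$, and therefore not on $Y_i$. By the classical Sylvester theorem on the rational normal curve of degree $d$ (cfr.\ \cite{cs}, \cite{lt}, \cite[\S 3]{bgi}), such a point has $Y_i$-border rank $2$, with $2P_i=Z_i$ as the \emph{unique} subscheme computing it, and has $Y_i$-rank exactly $d$.

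The border rank statement then follows at once: if $W\subset Y$ has $\deg(W)\le 4$ and $P\in\langle W\rangle$ in a minimal way, split $W=W_1\cup W_2$; the uniqueness of the decomposition $P=P_1'+P_2'$ forces $P_i'\in\langle W_i\rangle$, so $\deg(W_i)\ge b_{Y_i}(P_i')=2$. Hence $\deg(W)\ge 4$, giving $b_Y(P)=4$ and $\deg(W_i)=2$; Sylvester's uniqueness then forces $W_i=Z_i$, so $W=Z$.

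For the rank, applying the same splitting to a reduced $S\subset Y$ computing $r_Y(P)$ yields $\sharp(S\cap Y_i)\ge r_{Y_i}(P_i')=d$, hence $r_Y(P)\ge 2d$. The reverse inequality is immediate: pick $S_i\subset Y_i$ of cardinality $d$ realizing the $Y_i$-rank of $P_i'$ (which exists by Sylvester), then $S_1\cup S_2$ has cardinality $2d$ and its span contains $P$. Equality in $r_Y(P)=2d$ finally forces $\sharp(S\cap Y_i)=d$ for every reduced $S$ computing $r_Y(P)$. The only non-elementary ingredient in the whole argument is Sylvester's classical rank/border-rank dichotomy for points on the tangent line of a rational normal curve; the rest is the bookkeeping afforded by the disjoint-join decomposition, and I do not foresee any serious obstacle.
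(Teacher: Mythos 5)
Your proposal is correct, and it reaches every claim of the statement. It differs from the paper's proof mainly in how the rank bound is organized: the paper projects from $P$ to $\mathbb {P}^{2d}$, observes that the spans of the image curves $C_i=\ell _P(Y_i)$ meet in a single point $O$ lying on both tangent lines $T_{Q_i}C_i$, and invokes $r_{C_i}(O)=d$; you stay in $\mathbb {P}^{2d+1}$, use that $\langle Y_1\rangle$ and $\langle Y_2\rangle$ are complementary linear subspaces, split $P$ uniquely as $P=P'_1+P'_2$ with $P'_i\in T_{P_i}Y_i\setminus \{P_i\}$, and apply Sylvester to each $P'_i$ separately. Since $\ell _P$ restricts to an isomorphism $\langle Y_i\rangle \to \langle C_i\rangle$ carrying $P'_i$ to $O$, the two arguments are the same computation in different coordinates; the advantage of yours is that the uniqueness of $Z$, the lower bound $r_Y(P)\ge 2d$, and the count $\sharp (S\cap Y_i)=d$ all fall out at once from the uniqueness of the splitting, whereas the paper needs a separate small argument (the plane-versus-line argument inside $\langle Z\rangle \cong \mathbb {P}^3$) to establish $\langle Z\rangle \cap \langle Y_i\rangle =\langle Z_i\rangle$. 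The one point you should state explicitly is that $P\notin \langle Y_1\rangle \cup \langle Y_2\rangle$, so that both $P'_1$ and $P'_2$ genuinely occur in the splitting; this follows from your own framework, since a point of $\langle Z\rangle \cap \langle Y_i\rangle$ must equal the $\langle Z_i\rangle$-component of its join decomposition and would therefore lie in the span of a proper subscheme of $Z$.
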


\begin{proof}
Since $\deg (Z)=4$ and since $Y$ is a smooth curve, we have $b_Y(P) \le 4$. 

Obviously $\langle Z\rangle \cap \langle Y_i\rangle \supseteq \langle Z_i\rangle$. Let's see the other containment. We show that $\langle Z \rangle \cap \langle Y_1 \rangle \subseteq \langle Z_1 \rangle$ (the same proof holds for $\langle Z \rangle \cap \langle Y_2 \rangle \subseteq \langle Z_2 \rangle$). If $\langle Z \rangle \cap \langle Y_1 \rangle $ is not contained in $ \langle Z_1 \rangle$, then there exists a point $Q\in \langle Z\rangle\cap \langle Y_1\rangle $ such that $Q \notin \langle Z_1\rangle$. Therefore $\langle Z_1 ,Q\rangle$ is a plane $\Pi$, and since
$Q\in \langle Z \rangle$, we have that 
$\langle Z_1 , Q \rangle:=\Pi\subset \langle Z \rangle$. Now, by construction, $Q\in \langle Y_1 \rangle$, hence $\Pi$ is spanned by a 0-dimensional scheme of degree 3 that is contained in $\langle Y_1 \rangle$, by hypothesis $\langle Y_1 \rangle \cap \langle Y_2 \rangle = \emptyset$, then $\Pi$ cannot intersect $\langle Z_2 \rangle$ which is entirely contained in $\langle Y_2 \rangle$. Both $\Pi$ and $\langle Z_2 \rangle$ are contained in $\langle Z \rangle$ which has projective dimension 3. Therefore, if such a $Q$ exists, we would have a projective plane $\Pi$ and a line $\langle Z_2 \rangle$ that are contained in a $\mathbb{P}^3$ without intersecting each other, but this is impossible. Then $\langle Z\rangle \cap \langle Y_i\rangle \subseteq \langle Z_i\rangle $ for $i=1,2$.

Since $P\notin \langle Z'\rangle$ for all $Z'\subsetneq Z$, we get $P\notin \langle Y_1\rangle$
and $P\notin 
\langle Y_2\rangle $. Since for $i=1,2$ $Y_i$ is a rational normal curve, then $Z_i$ is the only sub-scheme of $Y_i$ evincing $b_{Y_i}(Q)$ for
all $Q\in T_{P_i}Y_i\setminus \{P_i\}$. We immediately get that $Z$ is the only sub-scheme of $Y$ with length at most $4$ whose
linear span contains $P$. Hence we have proved that $Z$ is the unique 0-dimensional scheme that evinces the $Y$-border rank of $P$ and that $P\in \sigma_4(Y)\setminus \sigma_3(Y)$.

Now we compute $r_Y(P)$. 
\\
Let $\ell _P: \mathbb {P}^{2d+1} \setminus \{P\} \to \mathbb {P}^{2d}$ denote
the linear projection from $P$. Set $C:= \ell _P(Y)$ and $C_i:= \ell _P(Y_i)$. Since $P\notin \sigma _2(Y)$, then $\ell _P\vert Y$ is an embedding. Hence $C_1\cap C_2=\emptyset$. Since $P\notin \langle Y_1\rangle \cup  \langle Y_2\rangle $, then each
$C_i$ is a degree $d$ rational normal curve in its linear span. Thus $r_Y(P)$ is the minimal cardinality of a set of points $A:=A_1\cup A_2$ such that $A_1\subset C_1$, $A_2\subset C_2$ and $A_1\cup A_2$ is linearly dependent.
Notice that $\langle C_1\rangle \cap \langle C_2\rangle$ is a unique point $O\notin C$. Set $Q_i:= \ell _P(P_i)$ with $P_i=(Z_i)_{red}$, $W_i:= \ell _P(Z_i)$ and $W:= W_1\cup W_2$. Hence $W_i$ is the degree $2$ effective divisor $2Q_i$ of $C_i$. Since $P\in \langle Z\rangle$, then $\langle W\rangle$ is a plane. Thus the two lines $T_{Q_i}C_i$, $i=1,2$, meet each other. 
Since $\{O\} = \langle C_1\rangle \cap \langle C_2\rangle$, then $O$ is their unique common point. Since $O\in T_{Q_i}C_i$, we have $r_{C_i}(O)=d$ (see \cite{cs} or \cite[Theorem 4.1]{lt}). Hence $\sharp (A\cap C_1) \ge d$
and $\sharp (A\cap C_2)\ge d$. Since $\langle C_1\rangle \cap \langle C_2\rangle =\{O\}$ and any $d+1$ points of $C_i$ are linearly independent,
$\sharp (A\cap C_1) = d$
and $\sharp (A\cap C_2)=d$ for every linearly dependent $A\subset C$ such that $\sharp (A\cap C_i)\le d$ for all $i$. 
Then $r_Y(P)\geq 2d$, but $r_{Y_i}(R_i)=d$ for all $R_i\in T_{P_i}Y_i\setminus Y_i$ and $i=1,2$, hence $r_Y(P)\leq 2d$ and therefore $r_Y(P)=2d$.
\end{proof}

\section{Lemmas}\label{S3}

In  Notation \ref{S}  and in Notation \ref{Z} we defined  two different 0-dimensional schemes  $Z,S\subset X$ realizing the $X$-border rank and the $X$-rank respectively of a point $P\in \sigma_4(X)\setminus (\sigma_4^0(X)\cup \sigma_3(X))$ and two 0-dimensional schemes $A,B\subset \mathbb{P}^m$ such that $\nu_{m,d}(A)=Z$ and $\nu_{m,d}(B)=S$   respectively. Here, but only for this Section \ref{S3}, we do not care about the fact that $P\in \langle \nu_{m,d}(A) \rangle \cap \langle \nu_{m,d}(B) \rangle$ is a point of $\sigma_4(X)\setminus (\sigma_4^0(X)\cup \sigma_3(X))$: for this section $A,B\subset \mathbb{P}^m$ are 0-dimensional schemes whose images via $\nu_{m,d}$ still realize the $X$-border rank and the $X$-rank respectively of a point $P\in \mathbb{P}^n$, but here we do not give any restriction on the minimum secant variety $\sigma_s(X)$ such that $P\in \sigma_s(X)$. This is summarized in the following notation.

\begin{notation}\label{ABlemmi} In this section, and only in this section, when considering a point $P\in \mathbb {P}^n$, we assume that: 
\begin{itemize}
\item $A \subset \mathbb{P}^m$ is a non-reduced 0-dimensional scheme such that $\nu_{m,d}(A)=Z\subset \mathbb{P}^n$ evinces the $X$-border rank of $P\in \mathbb{P}^n$,
\item $B \subset \mathbb{P}^m$ is a reduced 0-dimensional scheme such that $\nu_{m,d}(B)=S\subset \mathbb{P}^n$ evinces the $X$-rank of $P\in \mathbb{P}^n$,
\item $\deg(A)<\deg(B)$.
\end{itemize}
More assumptions on the degrees of $A$ and $B$ will be explained in each Lemma.
\end{notation}

We recall the following result (\cite[Lemma 1]{bb}).

\begin{lemma}\label{v1} Fix $P\in \mathbb{P}^{n_{m,d}}$. Let $A,B\in \mathbb{P}^m$ be two 0-dimensional schemes as in Notation \ref{ABlemmi}.
Then $h^1(\mathbb {P}^m,\mathcal {I}_{A\cup B}(d)) >0$.
\end{lemma}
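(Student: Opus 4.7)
The plan is to argue by contradiction: assume $h^1(\mathbb{P}^m, \mathcal{I}_{A\cup B}(d)) = 0$ and derive that some proper subscheme of $A$ already contains $P$ in the span of its image, contradicting the minimality of $A$ built into Notation \ref{ABlemmi}.

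First I would recall the standard dictionary between cohomology and linear spans. For any 0-dimensional subscheme $W \subset \mathbb{P}^m$, the exact sequence
$$
0 \to \mathcal{I}_W(d) \to \mathcal{O}_{\mathbb{P}^m}(d) \to \mathcal{O}_W(d) \to 0,
$$
combined with the fact that $\nu_{m,d}$ is defined by the complete linear system $|\mathcal{O}_{\mathbb{P}^m}(d)|$, gives
$$
\dim \langle \nu_{m,d}(W) \rangle = \deg(W) - 1 - h^1(\mathbb{P}^m, \mathcal{I}_W(d)).
$$
Moreover, if $W' \subseteq W$, then $h^1(\mathcal{I}_W(d)) = 0$ implies $h^1(\mathcal{I}_{W'}(d)) = 0$, because the restriction $H^0(\mathcal{O}_W) \to H^0(\mathcal{O}_{W'})$ is surjective for 0-dimensional schemes.

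Under the standing assumption $h^1(\mathcal{I}_{A\cup B}(d)) = 0$, the previous observation propagates the vanishing to $A$, $B$ and $A\cap B$. Combining the degree identity $\deg(A\cup B) = \deg(A) + \deg(B) - \deg(A\cap B)$, which follows from the Mayer--Vietoris sequence $0 \to \mathcal{O}_{A\cup B} \to \mathcal{O}_A \oplus \mathcal{O}_B \to \mathcal{O}_{A\cap B} \to 0$, with the Grassmann formula in $\mathbb{P}^{n_{m,d}}$, I would compute
$$
\dim\bigl(\langle \nu_{m,d}(A)\rangle \cap \langle \nu_{m,d}(B)\rangle\bigr) = \deg(A\cap B) - 1 = \dim \langle \nu_{m,d}(A\cap B)\rangle.
$$
Since the inclusion $\langle \nu_{m,d}(A\cap B)\rangle \subseteq \langle \nu_{m,d}(A)\rangle \cap \langle \nu_{m,d}(B)\rangle$ is automatic, equality of dimensions forces the two subspaces to coincide. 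In particular $P \in \langle \nu_{m,d}(A\cap B)\rangle$.

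To close the argument I would invoke minimality. By Notation \ref{ABlemmi} the scheme $\nu_{m,d}(A)$ computes the $X$-border rank of $P$, hence $P$ cannot belong to the span of the image of any proper subscheme of $A$. Therefore $A\cap B = A$, i.e.\ $A \subseteq B$. But $B$ is reduced, so every subscheme of $B$ is reduced as well, contradicting the hypothesis that $A$ is non-reduced. This contradiction establishes $h^1(\mathbb{P}^m, \mathcal{I}_{A\cup B}(d)) > 0$. The only delicate point in this plan is making sure the Grassmann formula yields an equality; once the vanishing of $h^1$ on $A\cup B$ is transferred to $A$, $B$ and $A\cap B$, each dimension in the formula is exactly $\deg(\cdot) - 1$ and the remaining steps are purely formal.
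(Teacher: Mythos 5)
Your argument is correct. Note that the paper does not prove this lemma at all --- it is recalled verbatim from \cite[Lemma 1]{bb} --- and your proof (propagating the vanishing of $h^1$ to subschemes, using Mayer--Vietoris plus the Grassmann formula to identify $\langle \nu_{m,d}(A)\rangle \cap \langle \nu_{m,d}(B)\rangle$ with $\langle \nu_{m,d}(A\cap B)\rangle$, and then invoking the minimality of $A$ together with the reducedness of $B$) is essentially the standard argument given in that reference.
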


We introduce here a tool that we will use in the proofs of the next lemmata.
 
\begin{notation}
Let $E\subset \mathbb{P}^m$ be a 0-dimensional scheme and let $H\subset \mathbb{P}^m$ be a hyperplane, then the sequence that defines the residual scheme $\mbox{Res}_H(E)$ of $E$ with respect to $H$ is the following:
\begin{equation}\label{eqi1}
0 \to \mathcal {I}_{\mbox{Res}_H(E)}(t-1) \to \mathcal {I}_E(t) \to \mathcal {I}_{E\cap H,H}(t)\to 0.
\end{equation}
\end{notation}

\begin{lemma}\label{z1}
Fix an integer $d\ge 2$ and a 0-dimensional and curvilinear subscheme $E$ of $\mathbb {P}^2$ such
that $\deg (E)=2d+2$ and $h^1(\mathcal {I}_E(d))>0$. Assume that $E$ has at least one reduced component.

\quad (i) If $E$ is in linearly general position, then $h^1(\mathcal {I}_E(d))=1$ and
there is a smooth conic $C$ such that $E\subset C$.

\quad (ii) If $E$ is not in linearly general position, then either there is a line $L\subset \mathbb {P}^2$ such
that $\deg (L\cap E)\ge d+2$ and $h^1(\mathcal {I}_E(d)) = \deg (L\cap E) -d-1$ or there is a singular conic $\Gamma$ such that
$E\subset \Gamma$. The fact that $E$ has at least one reduced component, implies that $\Gamma$ is reduced, say $\Gamma = L_1\cup L_2$,
with $L_1\ne L_2$, $\sharp (E\cap L_1)=\sharp (E\cap L_2)=d+1$ and $E$ is a Cartier divisor of $L_1\cup L_2$.
\end{lemma}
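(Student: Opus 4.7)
My plan is to analyse $h^1(\mathcal{I}_E(d))$ via the residual exact sequence (\ref{eqi1}), namely
$$0 \to \mathcal{I}_{\mathrm{Res}_L(E)}(d-1) \to \mathcal{I}_E(d) \to \mathcal{I}_{E \cap L, L}(d) \to 0,$$
for well-chosen lines $L \subset \mathbb{P}^2$. On a line one has $\mathcal{I}_{E\cap L, L}(d) \cong \mathcal{O}_L(d - \deg(E\cap L))$, so $h^1(L, \mathcal{I}_{E\cap L, L}(d)) = \max(0, \deg(E\cap L) - d - 1)$, and the problem reduces to controlling the cohomology of a smaller residual scheme. Setting $a := \max_L \deg(L \cap E)$ over lines of $\mathbb{P}^2$, I split into three cases $a \geq d+2$, $a = d+1$, and $a \leq d$, matching the first alternative of~(ii), the second alternative of~(ii), and~(i) respectively. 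The central tool will be a Castelnuovo-type bound in $\mathbb{P}^2$: a $0$-dimensional $F\subset \mathbb{P}^2$ with $\deg(F) \leq 2t+1$ satisfies $h^1(\mathcal{I}_F(t)) = 0$ unless some line meets $F$ in length $\geq t+2$, in which case $h^1$ equals the excess $\max_{L'}\deg(F\cap L') - t - 1$.

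For $a \geq d+2$, picking $L$ achieving the maximum gives $\deg(\mathrm{Res}_L(E)) = 2d+2-a \leq d$, so no line meets $\mathrm{Res}_L(E)$ in length $\geq (d-1)+2$. The Castelnuovo bound forces $h^1(\mathcal{I}_{\mathrm{Res}_L(E)}(d-1)) = 0$, and the long exact sequence then yields $h^1(\mathcal{I}_E(d)) = a - d - 1$. For $a = d+1$ the middle term has vanishing cohomology and the sequence collapses to $h^1(\mathcal{I}_F(d-1)) = h^1(\mathcal{I}_E(d)) > 0$ where $F := \mathrm{Res}_L(E)$ has degree $d+1 \leq 2(d-1)+1$ (this needs $d \geq 2$). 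The Castelnuovo bound then produces a line $L'$ with $\deg(F\cap L') \geq d+1$; since $\deg(F) = d+1$, the whole residual sits inside $L'$, whence $E \subset L \cup L'$. The hypothesis that $E$ has a reduced component rules out $L = L'$, because a curvilinear subscheme of the double line $2L$ has nilpotent local rings everywhere. Therefore $\Gamma := L \cup L'$ is the required reduced reducible conic, and the equalities $\sharp(E\cap L_i) = d+1$ follow from the maximality of $a$ and $\deg(E) = 2d+2$.

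For $a \leq d$ (the linearly general position case) I aim to show $E$ lies on a smooth conic with $h^1 = 1$. My plan is induction on $d$. The base case $d = 2$: any two conics through $E$ must share a component by B\'ezout, as $\deg(E) = 6 > 4$, and the shared component cannot be a line because $a \leq 2$ precludes three collinear points; hence the conic through $E$ is unique. The Euler-characteristic identity $h^0(\mathcal{I}_E(2)) - h^1(\mathcal{I}_E(2)) = \binom{4}{2} - 6 = 0$ combined with $h^1 > 0$ forces $h^0 = h^1 = 1$, and smoothness of the conic is automatic since a reducible conic $L_1 \cup L_2$ containing $E$ would put at least three of the six points on one line, contradicting $a \leq 2$. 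For the inductive step I choose a line $L$ through two reduced components of $E$; then $F := \mathrm{Res}_L(E)$ has degree $2(d-1)+2$ and, by vanishing of $H^1(\mathcal{I}_{E\cap L, L}(d))$, satisfies $h^1(\mathcal{I}_F(d-1)) \geq h^1(\mathcal{I}_E(d)) > 0$. Granting that $F$ inherits the lgp bound $\max_{L'}\deg(L'\cap F) \leq d-1$, the inductive hypothesis supplies a smooth conic $C \supset F$, and combining $C$ with the degree-$d$ sections lifted from $F$ through the residual sequence produces a degree-$d$ hypersurface through $E$ that factors through $C$. A reducible alternative for $C$ is excluded since it would have placed $E$ already in case~(ii).

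The hard part will be the inductive step of the lgp case: one must verify that the residual $F$ inherits the lgp condition at level $d-1$ and that the conic provided by induction really contains all of $E$, not merely $F$. I expect to control this by a careful choice of the auxiliary line $L$ (exploiting the curvilinear hypothesis and the existence of a reduced component to avoid accidental collinearities), together with a factorisation argument showing that every degree-$d$ curve through $E$ contains the unique conic $C \supset F$ as a component. As a fallback I may bypass induction and derive the sharp bound $h^1(\mathcal{I}_E(d)) \leq 1$ under lgp directly from a Castelnuovo--Mumford regularity computation for $0$-dimensional schemes of degree $2d+2$ in $\mathbb{P}^2$.
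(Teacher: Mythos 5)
Your treatment of the two alternatives of part (ii) corresponding to $a\ge d+2$ and $a=d+1$ (where $a:=\max_L\deg(L\cap E)$) is essentially the paper's argument and works, but the proposal has a genuine gap in the remaining range. Linearly general position for a $0$-dimensional scheme of $\mathbb{P}^2$ means that no line meets $E$ in degree $\ge 3$, i.e.\ $a\le 2$; your third case $a\le d$ is \emph{not} the linearly general position case once $d\ge 3$. The intermediate range $3\le a\le d$ is therefore handled by neither branch of your argument: the conclusion of (i), that $E$ lies on a smooth conic, is false there (a smooth conic meets every line in degree $\le 2$), and both the base case and the inductive step of your induction explicitly invoke $a\le 2$. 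This range has to be shown to be empty, and in fact the same move you use for $a=d+1$ does it: for $3\le a\le d$ the restriction to a maximal line $L$ has $h^1=0$, so $h^1(\mathcal I_{\mathrm{Res}_L(E)}(d-1))>0$ with $\deg(\mathrm{Res}_L(E))=2d+2-a\le 2(d-1)+1$, and the Castelnuovo-type bound produces a line meeting $E$ in degree $\ge d+1>a$, contradicting maximality. This is exactly how the paper's proof of (ii) runs: it treats $3\le a\le d+1$ uniformly and concludes that $a=d+1$ must hold.

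Part (i), the case $a\le 2$, is where the real content lies, and your proposal does not prove it: you acknowledge that the inductive step is unresolved, and the plan has further problems you do not flag. You need \emph{two} reduced components to choose the auxiliary line $L$, while only one is hypothesised; after removing both, the residual $F=\mathrm{Res}_L(E)$ may have no reduced component at all, so the inductive hypothesis no longer applies to it; and it is not established that the smooth conic through $F$ contains all of $E$. The paper avoids induction entirely: choose a conic $C$ with $y:=\deg(E\cap C)$ maximal. Since $h^0(\mathcal O_{\mathbb P^2}(2))=6$ and $\deg(E)\ge 6$, one has $y\ge 5$, and linearly general position forces $C$ to be smooth (a reducible or non-reduced conic would meet $E$ in degree at most $4$). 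Then $\mathrm{Res}_C(E)$ is again in linearly general position of degree $2d+2-y\le 2(d-2)+1$, so $h^1(\mathcal I_{\mathrm{Res}_C(E)}(d-2))=0$ by the Eisenbud--Harris theorem, and the residual sequence with respect to $C$ forces $h^1(C,\mathcal I_{E\cap C}(d))>0$, hence $\deg(E\cap C)\ge 2d+2$ and $E\subset C$, giving also $h^1=1$. Two smaller points: your justification that $L\ne L'$ in the $a=d+1$ case is wrong as stated (a reduced point of $L$ is a curvilinear subscheme of $2L$ with reduced local ring); the correct reason is that $E\subset 2L$ with $E$ curvilinear forces $\deg(E\cap L)\ge \deg(E)/2$, with strict inequality as soon as $E$ has a reduced connected component, contradicting $\deg(E\cap L)=d+1$. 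And the final assertion of (ii), that $E$ is a Cartier divisor of $L_1\cup L_2$, is not addressed in your proposal at all.
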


\begin{proof}
First assume that $E$ is in linearly general position. Let $C\subset \mathbb {P}^2$ be a conic such that
$y:= \deg (E\cap C)$ is maximal. Since  $\deg (E)\ge 5$ and $\binom{4}{2}=6$,
we have $y\ge 5$. Since no line contains at least $3$ points of $E$, the conic $C$ is smooth. Since $\mbox{Res}_C(E)
\subset E$, the scheme $\mbox{Res}_C(E)$ is in linearly general position. Since
$\deg (\mbox{Res}_C(E)) =2d+2-y \le 2(d-2)+1$, we have $h^1(\mathcal
{I}_{\mbox{Res}_C(E)}(d-2))=0$ (\cite[Theorem 3.2]{eh}). Thus the exact sequence
$$0 \to \mathcal
{I}_{\mbox{Res}_C(E)}(d-2) \to \mathcal {I}_E(d) \to \mathcal {I}_{C\cap E}(d)\to 0$$
gives $h^1(C,\mathcal {I}_{C\cap E}(d))>0$. Thus $\deg (E\cap C)\ge 2d+2$. Since $\deg (E)=2d+2$, we get $E\subset C$, concluding the proof of (i).

Now assume that $E$ is not in linearly general position. Take a line $L\subset \mathbb {P}^2$ such that $x:= \deg (L\cap E)$
is maximal. By assumption we have $x\ge 3$. First assume
$x \ge d+2$. Since $\mbox{Res}_L(E)$ has degree $2d+2-x$, we have
$h^1(\mathcal {I}_{\mbox{Res}_L(E)}(d-1))=0$ (\cite[Lemma 34]{bgi}). From the exact sequence (\ref{eqi1}) we
get the result in this case. Now assume $x \le d+1$. If $h^1(\mathcal {I}_{\mbox{Res}_L(E)}(d-1))=0$, then (\ref{eqi1}) gives
$h^1(\mathcal {I}_E(d))=0$ that is a contradiction. Thus $h^1(\mathcal {I}_{\mbox{Res}_L(E)}(d-1))>0$. Since
$2d+2-x \le 2(d-1)+1$ (\cite[Lemma 34]{bgi}) gives the existence of a line $R$
such that $z:= \deg (R\cap \mbox{Res}_L(E))\ge d+1$. The maximality property of $x$ and
the inclusion $\mbox{Res}_L(E) \subseteq E$ gives $x\ge d+1$. Since $z\le 2d+2-x$, we get $z=x=d+1$. If $E$ has at least one reduced component, then $L\ne R$, because $\deg (\mathrm{Res}_L(E) ) = \deg (E\cap L)$ and $E$ has at
least one reduced connected component.  A Weil divisor $W$ 
of $L_1\cup L_2$ is locally principal, except at most at $L_1\cap L_2$. A well-known property of nodal singularities says
that $W$ is Cartier if and only if $\deg (W) = \deg (W\cap L_1)+\deg (W\cap L_2)$. This condition is satisfied by $E$.
\end{proof}

We remind here the part of \cite[Theorem 1]{bb}, that will be useful in our paper applied in the particular case of $\deg (Z)=4$.

\begin{lemma}\label{bbb}  
Assume $m \ge 2$ and let $E,F\subset \mathbb{P}^m$ be two 0-dimensional schemes such that there exists a point $Q\in \langle
\nu_{m,d}(E)\rangle \cap \langle \nu_{m,d}(F)\rangle$ such that $Q\notin \langle \nu_{m,d}(E')\rangle$ for any $E'\subsetneq
E$ and $Q\notin \langle \nu_{m,d}(F')\rangle$ for any $F'\subsetneq F$. Then there are a line
$L\subset
\mathbb {P}^m$ and a finite set of points
$F_2\subset
\mathbb {P}^m\setminus L$ such that $\deg (L\cap (E\cup F)) \ge d+2$, $E = F_2\sqcup (E\cap L)$ and $F = F_2\sqcup (F\cap L)$
(as schemes).
\end{lemma}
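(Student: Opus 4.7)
The plan is to combine the cohomological obstruction from Lemma~\ref{v1} with the two minimality hypotheses to extract the required line and then identify the off-line components.

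First, apply Lemma~\ref{v1} to obtain $h^1(\mathbb{P}^m, \mathcal{I}_{E\cup F}(d)) > 0$; set $W := E\cup F$. The scheme $W$ therefore fails to impose independent conditions on $|\mathcal{O}_{\mathbb{P}^m}(d)|$, and I would exploit this by induction on $m$ via the residual sequence
\begin{equation*}
0 \to \mathcal{I}_{\mathrm{Res}_H(W)}(d-1) \to \mathcal{I}_W(d) \to \mathcal{I}_{W\cap H,H}(d) \to 0,
\end{equation*}
choosing at each step a hyperplane $H$ that maximizes $\deg(W \cap H)$. Either the restriction to $H$ absorbs the extra cohomology (and I recurse on $W\cap H$ inside $H \cong \mathbb{P}^{m-1}$) or the residual term does (and I recurse on $\mathrm{Res}_H(W)$ in degree $d-1$). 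The base case $m = 1$ is classical: a 0-dimensional $W\subset\mathbb{P}^1$ with $h^1(\mathcal{I}_W(d)) > 0$ has $\deg(W) \ge d+2$. Unravelling the descent produces a line $L\subset \mathbb{P}^m$ with $\deg(L\cap W) \ge d+2$.

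Next, I would identify the off-line components. Write $E = (E\cap L)\sqcup E^\circ$ and $F = (F\cap L)\sqcup F^\circ$ with $E^\circ,F^\circ\subset\mathbb{P}^m\setminus L$, and prove $E^\circ = F^\circ$ by contradiction. Suppose a connected component $G\subseteq E^\circ$ is not a subscheme of $F$. The bound $\deg(L\cap W)\ge d+2$ forces the cohomological obstruction on $W$ to be concentrated on $L$, which means that a suitable degree-$d$ form can be built vanishing on $(E\setminus G)\cup F$ but not on $G$. Such a form would exhibit a proper subscheme $E'\subsetneq E$ with $Q\in\langle\nu_{m,d}(E')\rangle$, contradicting the minimality of $E$. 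A symmetric argument gives the reverse inclusion, so setting $F_2 := E^\circ = F^\circ$ yields the required decomposition.

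The main obstacle is the last step: the separating degree-$d$ form must actually be produced, and the shrinking operation must yield a \emph{proper} subscheme. This is where the strict bound $\deg(L\cap W)\ge d+2$ (rather than $d+1$) enters in a crucial way, by guaranteeing enough slack in $H^0(\mathcal{I}_{L\cap W}(d))$ to perform the separation while still imposing the correct vanishing at the off-line points. A careful Castelnuovo--Mumford style dimension count, together with the fact that $Q\in\langle\nu_{m,d}(E)\rangle\cap\langle\nu_{m,d}(F)\rangle$ and not in any smaller span, is what makes the construction succeed.
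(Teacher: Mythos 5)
A preliminary remark: the paper offers no proof of this statement to compare against --- Lemma~\ref{bbb} is quoted from \cite[Theorem 1]{bb} --- so your argument can only be judged on its own terms, and on those terms it has a genuine gap at its first step. The statement as reproduced in the paper silently omits the hypothesis $\deg (E)+\deg (F)\le 2d+1$ (equivalently $\deg (E\cup F)\le 2d+1$), which is part of \cite[Theorem 1]{bb} and which the authors verify before every single application (``Since $d+5\le 2d+1$ \dots we may apply Lemma \ref{bbb}''; in Lemma \ref{m1} the case $\deg (A\cup B)=2d+2$ is deliberately kept outside the reach of Lemma \ref{bbb} and handled through the conic alternative of \cite[Lemma 34]{bgi}). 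Without that bound the implication ``$h^1(\mathcal {I}_{E\cup F}(d))>0$ implies there is a line $L$ with $\deg (L\cap (E\cup F))\ge d+2$'' is false, and so is the lemma itself: take $E$ and $F$ to be two disjoint sets of $d+1$ general points on a smooth conic $C\subset \mathbb {P}^2$. Then $\nu _{2,d}(E\cup F)$ is a set of $2d+2$ points on the degree $2d$ rational normal curve $\nu _{2,d}(C)$, so $h^1(\mathcal {I}_{E\cup F}(d))=1$, the two spans meet in a single point $Q$, Grassmann's formula shows $Q$ satisfies both minimality hypotheses, and yet no line contains more than two points of $E\cup F$ (this is exactly the configuration isolated in part (i) of Lemma \ref{z1}). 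Since your proposal derives the line from $h^1>0$ alone and never invokes a degree bound, the descent cannot produce $L$, and the conclusion it is meant to establish does not hold in the stated generality.

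Even after restoring the hypothesis $\deg (E\cup F)\le 2d+1$, two further steps need repair. First, the descent as described loses one degree every time the cohomology is ``absorbed by the residual'': after $k$ residual steps the base case yields a line $L$ with $\deg (L\cap \mathrm{Res}(\cdots ))\ge (d-k)+2$, not $\ge d+2$ on $E\cup F$; closing this requires the maximality of $\deg (W\cap H)$ together with the degree bound to force a contradiction unless $\deg (L\cap W)\ge d+2$ --- this is precisely \cite[Lemma 34]{bgi}, which should be cited or reproved in full rather than sketched. Second, the identification of $F_2$ is asserted rather than proved: producing one degree $d$ form vanishing on $(E\setminus G)\cup F$ but not on $G$ only places $Q$ on one hyperplane through $\langle \nu _{m,d}(E\setminus G)\rangle$; to conclude $Q\in \langle \nu _{m,d}(E')\rangle$ for a proper $E'\subsetneq E$ one needs the dimension count $\dim (\langle \nu _{m,d}(E)\rangle \cap \langle \nu _{m,d}(F)\rangle )=\deg (E\cap F)-1+h^1(\mathcal {I}_{E\cup F}(d))$ combined with $h^1(\mathcal {I}_{\mathrm{Res}_L(E\cup F)}(d-1))=0$, which again uses $\deg (\mathrm{Res}_L(E\cup F))\le 2d+1-(d+2)=d-1$; this is the mechanism of Lemma \ref{c2}, not a single separating form. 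You also do not address why $F_2$ is a reduced finite set, which in the intended applications comes from one of the two schemes being reduced.
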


\section{Preliminaries of the proof of the main theorem}\label{S4}

This section is essentially the core of the proof of Theorem \ref{theorem} but it is not the proof yet. Nevertheless that proof will be done in the next section. Here we give only all  the preliminaries in such a way that  the proof of Theorem \ref{theorem} will be reduced to its structure.

\begin{lemma}\label{c2} 
Let $E, F\subset \mathbb {P}^m$ be 0-dimensional schemes such that
$h^1(\mathcal {I}_E(d)) = h^1(\mathcal {I}_F(d))=0$.
Let
$M
\subset
\mathbb {P}^m$ be a hyperplane such that $h^1(\mathbb {P}^m,\mathcal {I}_{\mbox{Res}_M(E\cup F)}(d-1)) =0$. Then
$h^1(\mathbb{P}^m, \mathcal {I}_{E\cup F}(d))= h^1(M,\mathcal {I}_{(E\cup F)\cap M}(d))$.

\quad (a) If $\mbox{Res}_M(E)\cap \mbox{Res}_M(F) = \emptyset$, then $\langle \nu _{m,d}(E)\rangle
\cap \langle \nu _{m,d}(F)\rangle = \langle \nu _{m,d}(E\cap M)\rangle
\cap \langle \nu _{m,d}(E\cap M)\rangle 
\subseteq
\langle \nu_{m,d}(M)\rangle$. 

\quad (b) If $\mbox{Res}_M(E)\cap \mbox{Res}_M(F) \ne \emptyset$ and $F$ is reduced, then $\langle \nu _{m,d}(E)\rangle
\cap \langle \nu _{m,d}(F)\rangle$  is the linear span of $\langle \nu _{m,d}(E\cap M)\rangle \cap \langle \nu _{m,d}(F\cap M)\rangle$ and of the set $\nu _{m,d}(\mbox{Res}_M(E)_{red}\cap (F\setminus F\cup M))$.
\end{lemma}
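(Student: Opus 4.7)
The plan is to deduce the cohomological equality directly from the residual short exact sequence, then to translate it into the geometric statements (a) and (b) via the Grassmann formula and the identity
\begin{equation*}
\dim \langle \nu_{m,d}(Z)\rangle = \deg(Z) - 1 - h^1(\mathcal{I}_Z(d)),
\end{equation*}
valid for any 0-dimensional $Z \subset \mathbb{P}^m$.

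For the cohomological equality I would apply the long exact cohomology sequence to the residual sequence (\ref{eqi1}) for $E\cup F$,
\begin{equation*}
0 \to \mathcal{I}_{\mathrm{Res}_M(E\cup F)}(d-1) \to \mathcal{I}_{E\cup F}(d) \to \mathcal{I}_{(E\cup F)\cap M,M}(d) \to 0.
\end{equation*}
The hypothesis kills $h^1$ of the left term, and its $h^2$ vanishes automatically (for a 0-dimensional subscheme of $\mathbb{P}^m$ with $m\ge 2$, $d\ge 2$). The connecting map then yields $h^1(\mathbb{P}^m,\mathcal{I}_{E\cup F}(d)) = h^1(M,\mathcal{I}_{(E\cup F)\cap M}(d))$.

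For part (a), the hypotheses $h^1(\mathcal{I}_E(d))=h^1(\mathcal{I}_F(d))=0$ combined with the above identity and the Grassmann formula give
\begin{equation*}
\dim\!\bigl(\langle\nu_{m,d}(E)\rangle\cap\langle\nu_{m,d}(F)\rangle\bigr) = \deg(E)+\deg(F)-\deg(E\cup F)-1+h^1(\mathcal{I}_{E\cup F}(d)).
\end{equation*}
Since $\mathrm{Res}_M(E)\cap\mathrm{Res}_M(F)=\emptyset$, the scheme-theoretic intersection $E\cap F$ (ideal $\mathcal{I}_E+\mathcal{I}_F$) is supported on $M$; hence $E\cap F=(E\cap M)\cap(F\cap M)$ and $(E\cup F)\cap M=(E\cap M)\cup(F\cap M)$. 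The inclusions $\mathrm{Res}_M(E),\mathrm{Res}_M(F)\subseteq \mathrm{Res}_M(E\cup F)$ together with the monotonicity of the condition $h^1=0$ under passage to subschemes yield $h^1(\mathcal{I}_{\mathrm{Res}_M(E)}(d-1))=h^1(\mathcal{I}_{\mathrm{Res}_M(F)}(d-1))=0$, and via the residual sequences for $E$ and for $F$ separately this forces $h^1(\mathcal{I}_{E\cap M}(d))=h^1(\mathcal{I}_{F\cap M}(d))=0$. Applying the same Grassmann computation to $E\cap M$ and $F\cap M$ produces the same dimension as above, and the obvious inclusion $\langle\nu_{m,d}(E\cap M)\rangle\cap\langle\nu_{m,d}(F\cap M)\rangle\subseteq\langle\nu_{m,d}(E)\rangle\cap\langle\nu_{m,d}(F)\rangle$ upgrades to equality; the final containment in $\langle\nu_{m,d}(M)\rangle$ is automatic.

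For part (b), reducedness of $F$ forces the portion of $E\cap F$ outside $M$ to consist of reduced points, namely the finite set $\Sigma:=\mathrm{Res}_M(E)_{\mathrm{red}}\cap (F\setminus F\cap M)$, so $E\cap F=\bigl((E\cap F)\cap M\bigr)\sqcup\Sigma$. Both $U:=\langle\nu_{m,d}(E\cap M)\rangle\cap\langle\nu_{m,d}(F\cap M)\rangle$ and $\nu_{m,d}(\Sigma)$ lie in $\langle\nu_{m,d}(E)\rangle\cap\langle\nu_{m,d}(F)\rangle$, so their linear join $J$ does too. The same Grassmann/$h^1$ bookkeeping as in (a) yields $\dim U=\deg((E\cap F)\cap M)-1+h^1(\mathcal{I}_{E\cup F}(d))$ and $\dim(\langle\nu_{m,d}(E)\rangle\cap\langle\nu_{m,d}(F)\rangle)=\deg(E\cap F)-1+h^1(\mathcal{I}_{E\cup F}(d))$, whose difference is exactly $|\Sigma|$. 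The main obstacle is to check $\dim J=\dim U+|\Sigma|$, i.e.\ that the $|\Sigma|$ distinct points $\nu_{m,d}(p)$ ($p\in\Sigma$) are in linearly general position modulo $U$; this I would extract from $\nu_{m,d}(\mathbb{P}^m)\cap\langle\nu_{m,d}(M)\rangle=\nu_{m,d}(M)$, the reducedness of $\Sigma$ and the $h^1$-vanishings already established. Once this dimension match is in hand, the equality $J=\langle\nu_{m,d}(E)\rangle\cap\langle\nu_{m,d}(F)\rangle$ follows.
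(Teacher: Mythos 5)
Your proposal is correct and follows essentially the same route as the paper's proof: the residual exact sequence yields the $h^1$ identity, and parts (a) and (b) are the same Grassmann-formula bookkeeping based on $\dim \langle \nu_{m,d}(Z)\rangle = \deg (Z)-1-h^1(\mathcal {I}_Z(d))$. The only step you defer in (b) --- the independence of $\nu_{m,d}(\Sigma)$ modulo $U$ --- closes at once because $h^1(\mathcal {I}_F(d))=0$ makes $\nu_{m,d}(F)$ linearly independent, so $\langle \nu_{m,d}(F\cap M)\rangle \supseteq U$ and $\langle \nu_{m,d}(F\setminus F\cap M)\rangle \supseteq \langle \nu_{m,d}(\Sigma)\rangle$ are disjoint linear spaces; the paper's own proof is equally terse at this point.
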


\begin{proof}
Since $h^2(Y_m,\mathcal {I}_{E\cup F}(d-1)) =0$, the first equality follows from the residual sequence (\ref{eqi1})
for the scheme $E\cup F$. 

Assume
$\mbox{Res}_M(E)\cap \mbox{Res}_M(F) = \emptyset$, i.e. $E\cap F \subset M$. Since $h^1(\mathbb{P}^m, \mathcal {I}_{E\cup
F}(d))= h^1(M,\mathcal {I}_{(E\cup F)\cap M}(d))$, we have $\dim (\langle \nu _{m,d}(E\cup F)\rangle )
= \deg (E\cup F)-1-h^1(\mathbb{P}^m, \mathcal {I}_{E\cup F}(d))$, i.e.
$\dim (\langle \nu_{m,d}(E)\rangle \cap \langle \nu _{m,d}(F)\rangle ) = \deg (E\cap F)-1 +h^1(\mathbb{P}^m, \mathcal
{I}_{E\cup F}(d))$. For the same reason we have
$\dim (\langle \nu_{m,d}(E\cap M)\rangle \cap \langle \nu _{m,d}(F\cap M)\rangle ) = \deg (E\cap F\cap M)-1
+h^1(\mathbb{P}^m,
\mathcal {I}_{E\cup F}(d))$. Hence $\dim (\langle \nu _{m,d}(E)\rangle
\cap \langle \nu _{m,d}(F)\rangle ) = \dim (\langle \nu _{m,d}(E\cap M)\rangle
\cap \langle \nu _{m,d}(F\cap M)\rangle )$. Hence $\langle \nu _{m,d}(E)\rangle
\cap \langle \nu _{m,d}(F)\rangle  = \langle \nu _{m,d}(E\cap M)\rangle
\cap \langle \nu _{m,d}(F\cap M)\rangle$.

If $\mbox{Res}_M(E)\cap \mbox{Res}_M(F) \ne \emptyset$,
then we need to add its degree to compute $\dim (\langle \nu _{m,d}(E)\rangle
\cap \langle \nu _{m,d}(F)\rangle )$. If $F$ is reduced, then $\deg (\mbox{Res}_M(E)\cap \mbox{Res}_M(F))
= \sharp (\mbox{Res}_M(E)_{red}\cap (F\setminus F\cup M))$.
\end{proof}

Now we split the section in two subsections where we study the $X_{m,d}$-rank of a point $P\in \langle \nu_{m,d}(A) \rangle$ for particular configurations of the scheme $A\subset \mathbb{P}^m$ with $m=2,3$ respectively (if $A\subset \mathbb{P}^1$ we refer to the Sylvester algorithm in \cite{cs}, \cite{bgi}, \cite{cglm} and \cite{bcmt} for the computation of the $X_{1,d}$-rank of a point $P\in \langle\nu_{1,d}(\mathbb{P}^1) \rangle$). 

\subsection{Two dimensional case}\label{m2} 

Here we study the $X_{2,d}$-rank of a point $P\in \sigma_4(X_{2,d})\setminus (\sigma_4^0(X_{2,d})\cup \sigma_3(X_{2,d}))$ with $X_{2,d}$ the Veronese surface $\nu_{2,d}(\mathbb{P}^2)\subset \mathbb{P}^{n_{2,d}}$. Moreover we assume in this sub-section that the scheme $A\subset \mathbb{P}^2$ such that $Z=\nu_{2,d}(A)$ evinces the $X_{2,d}$-border rank of $P$ is not contained in a line, that is to say that $m=2$ is the minimum dimension of a linear space that contains $A$ where $A$ is defined as in Notation \ref{Z}. 
Since $A$ is not contained in a line we have that $\langle A\rangle=\mathbb{P}^2$ and $h^0(\mathbb{P}^2, \mathcal{I}_A(2))=2$.

\subsubsection{Here we assume the existence of a line $L \subset \mathbb{P}^2$ such that the schematic intersection between $A$ and $L$ has degree at least 3}\label{a} \ \\
Since we are assuming that there exists a line $L\subset \mathbb{P}^2$ such that $\deg(A\cap L) \ge 3$ 
and since $A\nsubseteq L$ we have $\deg (A\cap L) =3$ and $\mbox{Res}_L(A)$ is a point, $O$, with its reduced structure.
\\
Notice that every point $P'$ of $\langle \nu_{2,d}(A\cap L)\rangle\setminus\sigma_2(\nu_{2,d}(L))$ has rank $d-1$ (\cite{cs} or \cite[Theorem 4.1]{lt}), unless $A\cap L$ is reduced. In the latter case
any such a point has rank $3$.  

In Proposition \ref{la1} we study the case of $O\notin L$, while the case of $O\in L$ is done in Proposition \ref{la1.1}.

\begin{proposition}\label{la1} Let $A\subset \mathbb{P}^2$ be a 0-dimensional non-reduced scheme of degree $4$. Assume the existence of a line $L\subset \mathbb{P}^2$ such that $deg(L\cap A)\geq 3$ and $\mathrm{Res}_L(A)=:\{O\}\notin L$. Then
$$r_{X_{2,d}}(P) = d$$for every $P\in \langle \nu _{2,d}(A) \rangle\setminus (\sigma_4^0(X_{2,d})\cup\sigma_3(X_{2,d}))$.
\end{proposition}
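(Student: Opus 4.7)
The plan is to establish the two inequalities $r_{X_{2,d}}(P)\le d$ and $r_{X_{2,d}}(P)\ge d$ separately. First I would fix the decomposition $A=A'\cup\{O\}$, where $A':=A\cap L\subset L$ is non-reduced of length $3$ and $O=\mathrm{Res}_L(A)$ is the single reduced point off $L$. Since $\nu_{2,d}(O)\notin\langle\nu_{2,d}(L)\rangle$ (because $O\notin L$ and $\langle\nu_{2,d}(L)\rangle\cap X_{2,d}=\nu_{2,d}(L)$), the linear span $\langle\nu_{2,d}(A)\rangle$ is the join of the plane $\langle\nu_{2,d}(A')\rangle$ with $\nu_{2,d}(O)$, so $P$ admits a unique decomposition $P=P''+\lambda\,\nu_{2,d}(O)$ with $P''\in\langle\nu_{2,d}(A')\rangle$ and $\lambda\in\mathbb{K}$.

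For the upper bound I would apply Sylvester's classification to $P''$ on the rational normal curve $\nu_{2,d}(L)$. The $\nu_{2,d}(L)$-rank of $P''$ cannot be $1$, $2$, or $d$: in each of these cases $P''$ would lie in the span of some subscheme $A''\subsetneq A'$ of length at most $2$, and hence $P\in\langle\nu_{2,d}(A''\cup\{O\})\rangle\subset\sigma_3(X_{2,d})$, contradicting $P\notin\sigma_3(X_{2,d})$. The only remaining value given by Sylvester is $d-1$, so concatenating a length-$(d-1)$ decomposition of $P''$ on $\nu_{2,d}(L)\subset X_{2,d}$ with $\nu_{2,d}(O)$ produces an $X_{2,d}$-decomposition of $P$ of length $d$.

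The lower bound is the heart of the proof. Assume for contradiction $r_{X_{2,d}}(P)\le d-1$ and choose a reduced $B\subset\mathbb{P}^2$ with $\sharp(B)\le d-1$ and $P\in\langle\nu_{2,d}(B)\rangle$. Lemma \ref{v1} yields $h^1(\mathbb{P}^2,\mathcal{I}_{A\cup B}(d))>0$, and I would feed this into the residual exact sequence
\begin{equation*}
0\to\mathcal{I}_{\mathrm{Res}_L(A\cup B)}(d-1)\to\mathcal{I}_{A\cup B}(d)\to\mathcal{I}_{(A\cup B)\cap L,L}(d)\to 0,
\end{equation*}
splitting on the degree of $(A\cup B)\cap L$. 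If $\deg((A\cup B)\cap L)\ge d+2$, the estimate $\deg((A\cup B)\cap L)\le 3+\sharp(B\cap L)$ forces $B\subset L$, whence $P\in\langle\nu_{2,d}(A)\rangle\cap\langle\nu_{2,d}(L)\rangle=\langle\nu_{2,d}(A')\rangle\subset\sigma_3(X_{2,d})$, against the hypothesis. Otherwise $h^1(L,\mathcal{I}_{(A\cup B)\cap L,L}(d))=0$ and the sequence forces $h^1(\mathbb{P}^2,\mathcal{I}_{\mathrm{Res}_L(A\cup B)}(d-1))>0$; but $\mathrm{Res}_L(A\cup B)\subseteq\{O\}\cup(B\setminus(B\cap L))$ has degree at most $d\le 2(d-1)+1$, so \cite[Lemma 34]{bgi} would produce a line containing $\ge d+1$ of its points, impossible since the total degree is at most $d$. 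The one delicate step I anticipate is keeping track of the possible overlaps between $B$ and the support of $A'$, which can slightly reduce the degree of $(A\cup B)\cap L$; however, since $B$ is reduced, all the inequalities used above survive such overlaps, so both subcases yield a contradiction and the lower bound $r_{X_{2,d}}(P)\ge d$ follows.
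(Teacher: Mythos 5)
Your proposal is correct and follows essentially the same route as the paper: the upper bound by splitting $P$ along $\nu_{2,d}(O)$ and a point of $\langle \nu_{2,d}(A\cap L)\rangle$ and applying Sylvester on the rational normal curve $\nu_{2,d}(L)$, and the lower bound by combining Lemma \ref{v1} with \cite[Lemma 34]{bgi}. The only cosmetic difference is in the lower bound's case analysis: the paper applies \cite[Lemma 34]{bgi} directly to $A\cup B$ in degree $d$ and splits on whether the resulting line equals $L$, whereas you first take the residual with respect to $L$ and then invoke the same lemma in degree $d-1$; both versions close identically.
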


\begin{proof}
Since $\mbox{Res}_L(A) \ne \emptyset$, we have $A\nsubseteq L$, i.e. $\deg (A\cap L) = 3$. Hence $A = (A\cap L)\sqcup \{O\}$
with $O\notin L$. Since $P\notin \langle \nu _d(A\cap L) \rangle$, the line $\langle \{\nu _{2,d}(O),P\}\rangle \subset
\langle \nu _{2,d}(A)\rangle$ meets the plane $\langle \nu _{2,d}(A\cap L) \rangle$
in a unique point $P'$. We have $P'\in \langle \nu_{2,d} (A\cap L) \rangle$, a theorem
of Sylvester gives $P'\in \sigma_3(\nu_{2,d}(L))\setminus (\sigma_3^0(\nu_{2,d}(L))\cup \sigma_2(\nu_{2,d} (L)))$ and $r_{\nu
_{2,d}(L)}(P') =d-1$ (\cite{cs}, \cite{cglm}, \cite{bcmt}, \cite[Theorem 4.1]{lt}, \cite[\S 3]{bgi}). Hence $r_X(P')\le d-1$.
Since $P\in \langle \{\nu _{2,d}(O), P'\}\rangle$, we have $r_X(P) \le r_X(P')+1 = d$.

Assume $r_{X}(P) < d$, i.e. $\sharp (B) \le d-1$. Hence
$\deg (A\cup B) \le d+3 \le 2d+1$. There is a line $R\subset \mathbb{P}^2$ such that $\deg ((A\cup B)\cap R) \ge d+2$ (\cite[Lemma 34]{bgi}).
\\ First assume $R = L$. Since $P\notin \langle \nu _{2,d}(L)\rangle$ and $P\in \langle \nu _{m,d}(B)\rangle$, we have $B\cap L\subsetneq B$. Hence $\sharp (B\cap L) \le d-2$. Hence $\deg (A\cap L)\ge 4$, a contradiction. 
\\
Now assume $R\ne L$. Since the scheme $R\cap L$ is a reduced
point, we have $\deg (L\cap A\cap R) \le 1$. Since
$\deg (A\cap L) =\deg (A)-1$, we get
$\deg (A\cap R) \le 2$. Hence $\deg (R\cap B)\ge d$, contradicting
the assumption $\sharp (B)<d$.
\end{proof}

In the next two propositions we will do the cases in which the point $O=\mathrm{Res}_L(A)$ is contained in $L$. 
Observe that the definition of the residual scheme shows that the connected component $A_O$ of $A$ containing $O$ is not reduced. We will distinguish the three propositions below by the cardinality of the support of $A$.
 
\begin{proposition}\label{la1.1}  Let $A\subset \mathbb{P}^2$ be a 0-dimensional and connected scheme of degree $4$ such that there is a line $L\subset \mathbb{P}^2$ with $deg(L\cap A)\geq 3$ and $\mathrm{Res}_L(A)=:\{O\}\in L$.
If $A$ is curvilinear, then$$r_{X_{2,d}}(P) = 2d-2$$for every $P\in \langle \nu _{2,d}(A) \rangle\setminus (\sigma_4^0(X_{2,d})\cup\sigma_3(X_{2,d}))$.
If $A$ is not curvilinear, then $\langle \nu _{2,d}(A) \rangle \subset \sigma_3(X_{2,d})$.\end{proposition}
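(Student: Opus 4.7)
The proposition splits along the Gorenstein dichotomy supplied by Lemma \ref{oo1}, according as $A$ has embedding dimension $1$ (curvilinear) or $2$ (non-curvilinear). I treat the two cases separately.

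For the non-curvilinear case, my plan is to show directly that $\langle \nu_{2,d}(A) \rangle \subset \sigma_3(X_{2,d})$ by exhibiting a $\mathbb{P}^1$-family of length-$3$ subschemes of $A$ whose spans cover the entire $3$-plane $\langle \nu_{2,d}(A) \rangle$. Pick local coordinates at $O$ with $L = \{y = 0\}$. By Lemma \ref{oo1}(i) the local ideal of $A$ is $J = (Q_1, Q_2) + (x, y)^3$ for two linearly independent quadratics $Q_1, Q_2$. The condition $\deg(A \cap L) = 3$ forces $\mathcal{O}_A / (y) \cong \mathbb{K}[x]/(x^3)$, so any quadratic in $J$ must reduce to zero modulo $y$; hence $Q_1, Q_2$ are both divisible by $y$ and share this common linear factor. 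Lemma \ref{oo1}(ii) then tells us $A$ is non-Gorenstein with $2$-dimensional socle. The $1$-dimensional ideals contained in the socle form a $\mathbb{P}^1$, and each such ideal determines a length-$3$ subscheme $A_{(\alpha:\beta)} \subset A$ whose image under $\nu_{2,d}$ has span contained in $\sigma_3(X_{2,d})$ (by smoothability of length-$3$ schemes on $X$). A direct linear-algebra calculation in the dual $\langle \nu_{2,d}(A) \rangle \cong (\mathcal{O}_A)^*$ shows these $\mathbb{P}^2$'s sweep the full $\mathbb{P}^3$: picking socle generators $y, x^2$ and writing a functional on the basis $\{1, x, y, x^2\}$ as $(a, b, c, d)$, membership in $\langle \nu_{2,d}(A_{(\alpha:\beta)}) \rangle$ reduces to the single equation $\alpha c + \beta d = 0$, always solvable in $(\alpha : \beta) \in \mathbb{P}^1$.

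For the curvilinear case, $A$ is Gorenstein; in suitable local coordinates at $O$ the local ideal is $(y - x^3, x^4)$, so $A$ lies on the smooth cubic $C = \{y = x^3\}$ and (since $xy \in \mathcal{I}_A$) is schematically contained in the reducible conic $L \cup M$ with $M = \{x = 0\}$. I bound the $X_{2,d}$-rank from above and below. For the upper bound $r_{X_{2,d}}(P) \le 2d - 2$, I work on $Y = \nu_{2,d}(L) \cup \nu_{2,d}(M)$, the union of two degree-$d$ rational normal curves meeting at $\nu_{2,d}(O)$ — the same structure as Proposition \ref{a1}. Although the hypothesis $\deg(Z \cap Y_i) \ge 2$ of that proposition fails ($\deg(Z \cap Y_2) = 1$ here), the fact that $A \supset 3O_L$ still gives the crucial input: Sylvester on the rational normal curve $\nu_{2,d}(L)$ handles the length-$3$ piece at cost $d - 1$, and an analogue of the projection-from-$P$ argument of Proposition \ref{a1} handles the remaining transverse direction at cost $d - 1$, yielding sets $S_i \subset Y_i$ with $\sharp S_i = d - 1$ whose union $S$ satisfies $P \in \langle \nu_{2,d}(S) \rangle$.

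For the lower bound $r_{X_{2,d}}(P) \ge 2d - 2$, I assume $\sharp B \le 2d - 3$, so $\deg (A \cup B) \le 2d + 1$. Lemma \ref{v1} then gives $h^1(\mathcal{I}_{A \cup B}(d)) > 0$, and \cite[Lemma 34]{bgi} supplies a line $R$ with $\deg(R \cap (A \cup B)) \ge d + 2$. Two cases: if $R = L$, then $\sharp (B \cap L) \ge d - 1$, and the residual sequence (\ref{eqi1}) for $A \cup B$ with respect to $L$, combined with the constraint $P \notin \langle \nu_{2,d}(L) \rangle$ (otherwise $P \in \sigma_3$), forces a contradiction; if $R \ne L$, then $\deg(R \cap A) \le 1$ since $L$ is the unique tangent line to $A$ at $O$, so $\sharp (R \cap B) \ge d + 1$, and Lemma \ref{bbb} applied to the pair $(A, B)$ yields the incompatibility. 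The main obstacle I anticipate is the upper-bound construction in the curvilinear case: obtaining exactly $2d - 2$ rather than the $2d - 1$ a naive decomposition $P = P_1 + P_2$ gives (with $P_1 \in \langle \nu_{2,d}(3O_L) \rangle$ of rank $d - 1$ and $P_2$ a transverse tangent vector of rank $d$) requires exploiting the precise $3$-dimensionality of $\langle \nu_{2,d}(A) \rangle$ to save one point in the projection argument.
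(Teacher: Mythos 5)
Your non-curvilinear case is correct, and in fact more self-contained than the paper's treatment: the paper disposes of it by combining Lemma \ref{oo1}(ii) with the Gorenstein reduction of Remark \ref{ee4}, whereas you verify directly that the colength-one ideals of $\mathcal {O}_A=\mathbb {K}[[x,y]]/(xy,y^2,x^3)$ are exactly the lines in the two-dimensional socle $\langle y,x^2\rangle$, and that the resulting pencil of length $3$ subschemes has spans covering all of $\langle \nu _{2,d}(A)\rangle$. Your lower bound in the curvilinear case is also fine and is essentially the paper's: once $\deg (A\cup B)\le 2d+1$, Lemma \ref{bbb} alone gives $A=F_2\sqcup (A\cap R)$ for some line $R$, which is impossible for a connected, non-collinear, non-reduced $A$; your preliminary case split via \cite[Lemma 34]{bgi} is unnecessary but harmless.

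The genuine gap is the upper bound in the curvilinear case, exactly where you flag it, and the problem is worse than ``saving one point'': the $(d-1)+(d-1)$ decomposition you aim for, with $\sharp (S\cap Y_i)=d-1$ on each branch of $Y=\nu _{2,d}(L)\cup \nu _{2,d}(M)$, does not exist. Writing $I_A=(y-x^3,x^4)$ with $L=\{y=0\}$ and $M=\{x=0\}$, one has $x^3\in I_{(A\cap L)\cup M}=(xy,x^3)$ but $x^3\notin I_A$, so $A\not\subset (A\cap L)\cup M$ and hence $\langle \nu _{2,d}(A)\rangle \not\subset \langle \nu _{2,d}((A\cap L)\cup M)\rangle$: a point $P$ computing the border rank via all of $A$ is \emph{not} in the span of $\langle \nu _{2,d}(A\cap L)\rangle$ and $\langle \nu _{2,d}(M)\rangle$, so the cost $d-1$ piece on $\nu _{2,d}(L)$ you want is unavailable. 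Concretely, any splitting $P=F_1+F_2$ with $F_1\in \langle \nu _{2,d}(L)\rangle$ and $F_2\in \langle \nu _{2,d}(M)\rangle$ is unique up to shifting a multiple of $\nu _{2,d}(O)$ between the summands, and it forces $F_1\in \langle \nu _{2,d}(4O_L)\rangle \setminus \langle \nu _{2,d}(3O_L)\rangle$ (where $4O_L$ is the degree $4$ divisor of $L$ at $O$; for general such $F_1$ Sylvester gives $\nu _{2,d}(L)$-rank $d-2$) and forces $F_2$ onto the tangent line of $\nu _{2,d}(M)$ at $\nu _{2,d}(O)$, away from the vertex, so $r_{\nu _{2,d}(M)}(F_2)=d$. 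The achievable split is therefore $(d-2)+d=2d-2$, not $(d-1)+(d-1)$, and on this $Y$ every optimal $S$ has $\sharp (S\cap \nu _{2,d}(M))=d$. This is exactly the construction the paper carries out in Lemma \ref{m1} and Lemma \ref{m3lemma}: it works inside the $4$-dimensional space $\langle \nu _{2,d}(2O\cup 4O_L)\rangle$, which does contain $\langle \nu _{2,d}(A)\rangle$ because $(xy,y^2,x^4)\subset I_A$, picks a general point $Q_1$ on the tangent line of $\nu _{2,d}(M)$ at $\nu _{2,d}(O)$ (cost $d$), and intersects the line $\langle \{P,Q_1\}\rangle$ with the hyperplane $\langle \nu _{2,d}(4O_L)\rangle$ of that $\mathbb {P}^4$ (cost $d-2$, after checking the intersection point avoids the spans of the degree $3$ subdivisors). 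So the missing idea is to enlarge $A\cap L$ to $4O_L$ and pay full tangent-vector price on the transverse branch, rather than to improve a projection argument modeled on Proposition \ref{a1}, whose hypothesis $\deg (Z\cap Y_i)\ge 2$ fails here in an essential, not merely technical, way.
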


\begin{proof}
Since $\mathrm{Res}_L(A)\ne \emptyset$, we have $A\nsubseteq L$. Hence $\deg (A\cap L)=3$. If $A$ is not curvilinear, then we are in case (ii) of Lemma \ref{oo1} with as $x$ a local equation of $L$.
Hence $\langle \nu _{2,d}(A) \rangle \subset \sigma_3(X_{2,d})$ (Lemma \ref{oo1} and Remark \ref{ee4}). Hence from now on, we assume that $A$ is curvilinear. We first prove that $r_{X_{2,d}}(P) \ge 2d-2$. Assume that 
$r_{X_{2,d}}(P) \le 2d-3$. Hence $\deg (A\cup B) \le 2d+2$. Therefore we may apply Lemma \ref{bbb}. Since $A_{red}$ is a single point, Lemma \ref{bbb} gives $F_2=\emptyset$, i.e. $A\cup B
\subset L$, a contradiction.

Now we prove that  $r_{X_{2,d}}(P) \le 2d-2$. Let $L'\subset \mathbb {P}^2$ be any line such that $O\in L'$ and $L'\ne L$. Since $\mathrm{Res}_L(A) \in L'$
we have $A\subset L\cup L'$. We will find $E\subset L\cup L'$ such that $\sharp (E) =2d-2$, $P\in \langle \nu _{2,d}(E)\rangle$, $\sharp (E\cap L)=d-1$,
$\sharp (E\cap L')=d-1$ and $O\notin E$. Notice that $\langle \nu _{2,d}(L)\rangle$ and  $\langle \nu _{2,d}(L')\rangle$ are $d$-dimensional linear subspaces of $\mathbb {P}^n$
with $\nu _{2,m}(O)$ as its intersection. Fix any $Q\in \langle \nu _{2,d}(A\cap L)\rangle$ such that $Q\notin \langle \nu _{2,d}(v)\rangle$, where $v$ is the tangent vector of $L$ at $O$.
We have $r_{\nu _{2,d}(L)}(Q) =d-1$ (\cite{cs}, \cite[Theorem 4.1]{lt}, or \cite[\S 3]{bgi}). Fix $E_1\subset L$ such that $\nu _{2,d}(E_1)$ evinces  $r_{\nu _{2,d}(L)}(Q)$.
Since $P\notin \langle \nu _{2,d}(L)\rangle $ and $P\notin \langle \nu _{2,d}(L')\rangle )$ (e.g. because the scheme $A$ is not contained in a line), the line $\langle \{Q, P\}\rangle$ meets $\langle \nu _{2,d}(L')\rangle$ at a unique
point, $Q'$. Take $E_2\subset L'$ such that $\nu _{2,d}(E_2)$ evinces $r_{\nu _{2,d}(L')}(Q')$ and set $E:= E_1\cup E_2$. It is sufficient to prove that $\sharp (E_2)\le d-1$.
Assume $\sharp (E_2) \ge d$. Hence $\nu _{2,d}(Q')$ is contained in the tangent developable of $\nu _{2,d}(L')$ (\cite{cs}, \cite[Theorem 4.1]{lt}, or \cite[\S 3]{bgi}).
Hence there is a degree $2$ scheme $W\subset L'$ such that $Q' \in \langle \nu _{2,d}(W)\rangle$. Hence $P\in \langle \nu _{2,d}(W \cup E_1)\rangle$. Since $\deg (\mbox{Res}_L(W\cup E_1\cup A))
\le 2$, we have $h^1(\mathcal {I}_{\mbox{Res}_L(A\cup W\cup E_1)}(d-1)) =0$. Hence the proof of Lemma \ref{c2} gives $\dim (\langle \nu _{2,d}(A)\rangle \cap \langle \nu _{2,d}(W\cup E_1)\rangle)
= \dim (\langle \nu _{2,d}(A\cap L)\cap \langle \nu _{2,d}((W\cup E_1)\cap L)\rangle )+ \deg (A\cap (W\cup E_1)) -\deg ((A\cap L)\cap  ((W\cup E_1)\cap L))$. We have
$E_1\cap A =\emptyset$ and $\deg (W\cap L) \le 1$, with equality only if $W_{red} = O$. Since $W\cap A=\emptyset$ if $W_{red} \ne O$, it is sufficient
to notice that $\deg (W\cap A)=1$ if $W_{red}=O$, because $L'\ne L$ and $L$ is the Zariski tangent space of the curvilinear scheme $A$.
\end{proof}

\begin{lemma}\label{m1}
Let $D, R\subset \mathbb {P}^2$  be two different lines intersecting in $O\in \mathbb{P}^2$. 
Fix $P_1\in R\setminus \{O\}$ and let $A_2\subset R$ be the degree $2$
effective divisor of $R$ with $P_1$ as its support. 
Let $A_1\subset D$ be the degree $2$ effective divisor of $D$ with $O$ as its support.
Set $A:= A_1\cup A_2$.
Fix $P\in \langle \nu _{2,d}(A)\rangle$ such that $P\notin \langle \nu _{2,d}(A')\rangle$ for any
$A'\subsetneq A$. Then:

\quad (a) $r_{X_{2,d}} (P) =2d-2$.

\quad (b) Fix any $B\subset \mathbb {P}^2$ evincing $r_{X_{2,d}} (P)$. Then $O\notin B$, $P_1\notin B$, $\sharp
(B\cap R)=d-2$ and there is a line $L\subset \mathbb {P}^2$ such that $O\in L$, $L\ne D, R$,
$\sharp (B\cap L )=d$ and $B\subset L\cup R$.

\quad (c) Fix any line $L\subset \mathbb {P}^2$ such that $O\in L$, $L\ne D,R$.
Then there is $B\subset \mathbb {P}^2$ evincing $r_{X_{2,d}} (P)$ and such that $\sharp (B\cap L)
= d$.
\end{lemma}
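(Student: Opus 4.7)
The plan is to prove (a), (b), (c) in an integrated way: establish the lower bound $r_{X_{2,d}}(P)\ge 2d-2$ using Lemmas \ref{v1} and \ref{bbb}, then produce, for each admissible line $L$, an explicit reduced set $B\subset L\cup R$ of size $2d-2$ realizing the upper bound (this gives the upper half of (a) and all of (c) simultaneously), and finally extract (b) by applying Lemma \ref{z1} to $A\cup B$ and eliminating the other possibilities from its classification. Throughout I may freely use the observation that $A\subset L\cup R$ as subschemes of $\mathbb{P}^2$ for \emph{every} line $L$ through $O$ distinct from $R$: the local ideal $(y,x^2)$ of $A_1=2O$ at $O$ contains the local equation $x(y-mx)$ of the nodal conic $L\cup R$, while $A_2\subset R\subset L\cup R$ trivially.

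For the lower bound, assume for contradiction that some reduced $B$ computes $r_{X_{2,d}}(P)$ with $s:=\sharp B\le 2d-3$. Then $\deg(A\cup B)\le 2d+1$ and $h^1(\mathcal{I}_{A\cup B}(d))>0$ by Lemma \ref{v1}. Lemma \ref{bbb} supplies a line $M$ and a scheme $F_2\subset \mathbb{P}^2\setminus M$ with $A=F_2\sqcup(A\cap M)$ and $B=F_2\sqcup(B\cap M)$. The connected components $A_1=2O$ and $A_2=2P_1$ are non-reduced of degree $2$ each, while $F_2\subseteq B$ is reduced; a quick case check on which union of connected components of $A$ may equal $F_2$ leaves only $F_2=\emptyset$, forcing $A\subset M$, which contradicts $\dim\langle A\rangle=3$.

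For the upper bound and (c), fix any line $L\ne D,R$ through $O$ and consider the reducible curve $Y=Y_L\cup Y_R$ where $Y_L=\nu_{2,d}(L)$ and $Y_R=\nu_{2,d}(R)$ are rational normal curves of degree $d$ meeting at $\nu_{2,d}(O)$ with $\langle Y\rangle=\mathbb{P}^{2d}$. Since $A\subset L\cup R$ we have $P\in\langle\nu_{2,d}(A)\rangle\subset\langle Y\rangle$. Decompose $P=P'+P''$ with $P'\in\langle Y_L\rangle$, $P''\in\langle Y_R\rangle$, using the one-parameter ambiguity coming from $\nu_{2,d}(O)\in\langle Y_L\rangle\cap\langle Y_R\rangle$ to arrange that $P'\in T_O Y_L$ and $P''$ belongs to the span of the degree-$4$ subscheme $2\nu_{2,d}(O)+2\nu_{2,d}(P_1)$ of the single rational normal curve $Y_R$. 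Sylvester's theorem on $Y_L$ then yields a reduced set of $d$ points of $L$ (missing $O$) with $P'$ in their span, and Sylvester's theorem on $Y_R$, applied to the non-reduced length-$4$ subscheme, yields a reduced set of $d-2$ points of $R$ (missing $O$ and $P_1$) with $P''$ in their span (the rank equals $d-2$ as soon as Sylvester's formula $n-k+2$ applies, i.e.\ for $d\ge 4$). Their union is the desired $B$, and sweeping $L$ proves (c).

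For (b), given any $B$ computing the rank, $\deg(A\cup B)\le 2d+2$ and $A\cup B$ is curvilinear with at least one reduced component; apply Lemma \ref{z1}. The smooth-conic case (i) would force $B$ to sit on a smooth conic together with the tangent data of $A$, contradicting the Sylvester structure on the corresponding rational normal curve of degree $2d$. The ``a line contains $\ge d+2$ points of $A\cup B$'' branch of (ii) is incompatible with $A\cap L'\le 3$ for any line $L'$ together with $\sharp B=2d-2$. The only surviving case of (ii) is the reducible-conic case $L_1\cup L_2$ with $\sharp((A\cup B)\cap L_i)=d+1$, and the non-reducedness of $A$ at $O$ and at $P_1$ forces $\{L_1,L_2\}=\{L,R\}$ for a line $L$ through $O$ different from $D$ and $R$, with the stated incidences; the fact that $A_1$ and $A_2$ are length-$2$ non-reduced together with the reducedness of $B$ finally rules out $O\in B$ and $P_1\in B$. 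The main subtlety is arranging the splitting $P=P'+P''$ so that simultaneously $P'$ lies on the tangent line $T_O Y_L$ and $P''$ stays minimal with respect to the scheme $2\nu_{2,d}(O)+2\nu_{2,d}(P_1)$ on $Y_R$, ensuring that Sylvester yields exactly $d$ and $d-2$ points rather than more, and that $B$ avoids $O$ and $P_1$.
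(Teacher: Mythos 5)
Your overall architecture (lower bound via Lemmas \ref{v1} and \ref{bbb}, upper bound by splitting $P$ between $\langle \nu_{2,d}(L)\rangle$ and $\langle \nu_{2,d}(R)\rangle$ and running Sylvester on each line, then a conic trichotomy for (b)) is essentially the paper's, and your observation that $A\subset L\cup R$ for every line $L\ne R$ through $O$ is correct and is the right starting point. But there are genuine gaps. The most serious is the one you yourself flag and then leave unresolved: you must show that the component $P''\in\langle \nu_{2,d}(E_2\cup A_2)\rangle$ (where $E_2$ is the degree-$2$ divisor of $R$ at $O$) can be chosen outside $\langle \nu_{2,d}(F)\rangle$ for both proper degree-$3$ subschemes $F$ of $E_2\cup A_2$; otherwise Sylvester does not return $d-2$ points on $R$. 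The degeneration $F=\{O\}\cup A_2$ is excluded by the hypothesis on $P$ (it is a proper subscheme of $A$), but $F=E_2\cup\{P_1\}$ is \emph{not} a subscheme of $A$, so the hypothesis does not apply; the paper rules it out by observing that it would give $r_{X_{2,d}}(P)\le d+1$, which contradicts the already-established lower bound (or Lemma \ref{bbb}, since $d+1+4\le 2d+1$ for $d\ge 4$). Without this step the upper bound, and hence (a) and (c), is not proved.

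Second, your elimination of the smooth-conic case in (b) does not work as stated: if $A\cup B$ lay on a smooth conic $C$, then $\nu_{2,d}(C)$ is a degree-$2d$ rational normal curve, $\nu_{2,d}(A)$ is a minimal degree-$4$ subscheme with $P$ in its span, and Sylvester gives $r_{\nu_{2,d}(C)}(P)=2d-4+2=2d-2=\sharp(B)$ --- perfectly consistent, no contradiction. The case is in fact vacuous for a different reason: $\deg(A\cap R)=3$ (the point $O$ from $A_1$ plus the length-$2$ scheme $A_2$), so $A\cup B$ is not in linearly general position and case (i) of Lemma \ref{z1} cannot occur; equivalently, by Bezout any conic containing $A$ contains $R$. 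Third, your dismissal of the ``line with $\ge d+2$ points'' branch is not an argument: a line could a priori carry $3$ points of $A$ and $d-1\le 2d-2$ points of $B$, so the numbers alone do not clash; the paper excludes this branch via the residual sequence together with Lemma \ref{bbb}. Two smaller points: $\dim(\langle A\rangle)=2$, not $3$ (the contradiction in the lower bound is simply that $A$ is not collinear); and Lemma \ref{z1} requires degree exactly $2d+2$, so in (b) you must first show $A\cap B=\emptyset$ (which follows from the same Lemma \ref{bbb} argument applied when $\deg(A\cup B)\le 2d+1$).
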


\begin{proof}
Fix any line $L\subset \mathbb {P}^2$ passing through $O$ and such that $L\ne D,R$. We first prove the
existence of a finite set $E\subset \mathbb {P}^2$ such that $\sharp (E) =2d-2$, $O\notin E$, $P_1\notin E$, $\sharp
(E\cap R)=d-2$, $\sharp (E\cap L) =d$ and $P\in \langle \nu _{2,d}(E)\rangle$. Let $2O$ be the first
infinitesimal neighborhood of $O$ in $\mathbb {P}^2$. Let $E_1, E_2, A_1 \in \mathbb{P}^2$ be 0-dimensional schemes obtained by intersecting  $2O$ with $L,R$ and $D$  respectively.
Since $d\ge 4$, we have $\dim (\langle \nu _{2,d}(2O\cup A_2)\rangle )=4$.
Fix any $Q_1\in \langle \nu _{2,d}(E_1)\rangle \setminus \{\nu _{2,d}(O)\}$. Since $\langle \nu _{2,d}(E_1)\rangle$
is the tangent line at $\nu _{2,d}(O)$ of the rational normal curve $\nu _{2,d}(L)$, a theorem of Sylvester
gives the existence of $B_1\subset L\setminus \{O\}$ such that $Q_1\in \langle \nu _{2,d}(B_1)\rangle$ (\cite{cs}, \cite{bgi}, \cite{bcmt}). 
The $4$-dimensional linear space $\langle \nu _{2,d}(2O\cup A_2)\rangle$ contains 
$\langle \nu _{2,d}(E_2\cup A_2)\rangle$. Hence the line $\langle \{P,Q_1\}\rangle \subset\ \langle \nu _{2,d}(2O\cup A_2)\rangle$ contains at least one point, $Q_2$, of $\langle \nu _{2,d}(E_2\cup A_2)$.

\quad {\emph {Claim:}} For general $Q_1\in \langle \nu_{2,d}(E_1)\rangle \setminus \{ \nu_{2,d}(O)\}$ we may find $Q_2\in \langle \{P, Q_1\} \rangle \cap \langleÊ\nu_{2,d} (E_2 \cup A_2)\rangle $ such that $Q_2\notin \langle
\nu _{2,d}(F)\rangle$ for any $F\subsetneq E_2\cup A_2$.

\quad {\emph {Proof of the Claim:}} There are only $2$ degree $3$ subschemes of $E_2\cup A_2$
($\{O\}\cup A_2$ and $E_2\cup \{P_1\}$) and it is sufficient to prove the claim for these subschemes. Assume the claim fails for one of them. Hence $\{P,Q_1\}\subset \langle \nu _{2,d}(F)\rangle$
with either $F = \{O\}\cup A_2$ or $F = E_2\cup \{P_1\}$.  In particular $P\in \langle \nu _{2,d}(F)\rangle$. The case  $F = \{O\}\cup A_2$ contradicts the hypothesis  ``~$P\notin \langle \nu _{2,d}(A')\rangle$ for any
$A'\subsetneq A$~", in fact $F$ is a proper subscheme of  $A$.
Now assume $F=E_2\cup \{P_1\}$.
There is $Q_4\in \langle \nu _{2,d}(E_2)\rangle$ such that $P\in \langle Q_4\cup \nu _{2,d}(P_1)\rangle$. We get $r_{X_{2,d}}(P) \le d+1$.
The proof of parts (a) and (b) below (the line $D$) shows that  $r_{X_{2,d}}(P) \le d+1$
is not even possible (alternatively the contradiction  can be inferred from Lemma  \ref{bbb} because $d+1 + 4 \le 2d+1$).

By the Claim and the quoted theorem of Sylvester there is $B_2\subset R$ such
that $\sharp (B_2)=d-2$ and $Q_2\in \langle \nu _{2,d}(B_2)\rangle$.

Set $E:= B_1\cup B_2$. Since $Q_i\in \langle \nu _{2,d}(B_i)\rangle$, $i=1,2$, $Q_2
\in \langle \{P,Q_1\}\rangle$ and $Q_1\ne Q_2$, we have $P\in \langle \nu _{2,d}(E)\rangle$.

In particular we proved that $r_{X_{2,d}} (P)\le 2d-2$. Let $B\subset \mathbb{P}^2$ be a set evincing $r_{X_{2,d}}(P)$.  Since $r_{X_{2,d}}(P) \le 2d-2$, we
have $\deg (A\cup B) \le 2d+2$.

 First assume $\deg (A\cup B) \le 2d+1$. Since no component of $A$ is reduced, Lemma \ref{bbb}
gives that $A$ is contained in a line, a contradiction. 

Now assume $\deg (A\cup B)=2d+2$. Hence $A\cap B = \emptyset$. Since
$h^1(\mathbb{P}^2,\mathcal {I}_{A\cup B}(d))>0$ (Lemma \ref{v1}), either
there is a line $M\subset \mathbb {P}^2$ such that $\deg (M\cap (A\cup B)) \ge d+2$
or there is a conic $T$ such that $\deg (T\cap (A\cup B)) \ge 2d+2$ (Lemma \ref{z1}).

First assume the existence of a line $M$ such that $\deg (M\cap (A\cup B)) \ge d+2$.
Since $\deg (\mbox{Res}_M(A\cup B))\le d$, we have
$h^1(\mathcal {I}_{\mbox{Res}_M(A\cup B)}(d-1))=0$.
Since no connected component of $A$ is reduced, Lemma \ref{bbb}  gives $A\subset L$, a contradiction.

Now assume the existence of a conic $T$ such that $\deg (T\cap (A\cup B)) \ge 2d+2$.
Since $\deg (A\cup B) \le 2d+2$, we get $\deg (A\cup B)=2d+2$ and $A\cup B\subset T$.
Since $\deg (A\cup B) =2d+2$, we have $O\notin B$ and $P_2\notin B$.
Since $\deg (A\cap R)=3$, Bezout theorem gives $T = R\cup L$ with $L$
a line. Since $A_1\subset T$ and $A_1\nsubseteq R$, we have $O\in L$. We
just checked that $\deg ((A\cup B)\cap R) \le d+1$ and $\deg ((A\cup B)\cap L)
\le d-1$. Since $\deg (A_1\cap D) = 2$, we get $L\ne D$, $\sharp (B\cap L)=d$
and $\sharp (B\cap R)=d-2$.
\end{proof}

\begin{corollary}\label{m2}
Let $A\subset \mathbb{P}^2$ be a 0-dimensional scheme of degree $4$ obtained as the union of two non-reduced degree 2 schemes $A_1,A_2$ with supports on the same  line $ R= \langle A_2 \rangle$, and such that $D:= \langle A_1\rangle \neq R$. Set $O:= (A_1)_{red}$ and $O':= (A_2)_{red}$. Let $P \in \langle  \nu_{2,d} (A)\rangle$ be such that $P\notin \langle \nu _{2,d}(A')\rangle$
for any $A'\subsetneq A$.
Set $Y:= \nu _{2,d}(R\cup L)$. Then 
$$r_Y(P) = 2d-1$$
and there is $E\subset L\cup R$ such that $\nu _{2,d}(E)$ evinces $r_Y(P)$,
$O\notin E$, $O'\notin E$, $\sharp (E\cap D) = d$ and $\sharp (E\cap R)=d-1$.
\end{corollary}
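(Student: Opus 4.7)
The plan is to establish both the lower bound $r_Y(P) \geq 2d-1$ and the matching upper bound by an explicit construction, with $Y=\nu_{2,d}(R\cup L)$ taken in the relevant case $L=D$.

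For the lower bound, I invoke Lemma~\ref{m1}(b): every reduced $B\subset\mathbb{P}^2$ computing $r_{X_{2,d}}(P)=2d-2$ lies on a union $L'\cup R$ for some line $L'\subset\mathbb{P}^2$ through $O$ with $L'\ne D,R$. Such a $B$ must contain points of $L'\setminus(D\cup R)$, so $B\not\subset D\cup R$ and $\nu_{2,d}(B)\not\subset Y$. Therefore no configuration on $Y$ achieves $2d-2$, and $r_Y(P)\geq 2d-1$.

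For the upper bound I adapt the construction used in the proof of Lemma~\ref{m1}(c), taking $L=D$ throughout. Pick $Q_1\in T_1:=\langle\nu_{2,d}(A_1)\rangle$ with $Q_1\ne\nu_{2,d}(O)$; by Sylvester's theorem on the rational normal curve $\nu_{2,d}(D)\subset\mathbb{P}^d$, there exists a reduced set $B_1\subset D\setminus\{O\}$ of cardinality $d$ with $Q_1\in\langle\nu_{2,d}(B_1)\rangle$. The line $\langle P,Q_1\rangle$ sits inside the $4$-dimensional space $\langle\nu_{2,d}(2O\cup A_2)\rangle$ and meets the $3$-plane $\langle\nu_{2,d}((2O\cap R)\cup A_2)\rangle$ in a unique point $Q_2'$. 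Because $Q_1\in T_1\subset\langle\nu_{2,d}(A)\rangle$, the whole line $\langle P,Q_1\rangle$ lies in $\langle\nu_{2,d}(A)\rangle$, and the intersection is forced into the smaller $2$-plane $\langle\nu_{2,d}(A)\rangle\cap\langle\nu_{2,d}((2O\cap R)\cup A_2)\rangle=\langle T_2,\nu_{2,d}(O)\rangle=\langle\nu_{2,d}(\{O\}\cup A_2)\rangle$. Sylvester's theorem applied to this length-$3$ subscheme of $\nu_{2,d}(R)$ gives that a general point of the $2$-plane has $\nu_{2,d}(R)$-rank $d-1$, producing $B_2\subset R\setminus\{O,P_2\}$ of cardinality $d-1$ with $Q_2'\in\langle\nu_{2,d}(B_2)\rangle$. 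Setting $E:=B_1\cup B_2$ gives $|E|=2d-1$, $O\notin E$, $P_2\notin E$, $\sharp(E\cap D)=d$, $\sharp(E\cap R)=d-1$, and $P\in\langle\nu_{2,d}(B_1)\rangle+\langle\nu_{2,d}(B_2)\rangle\subseteq\langle\nu_{2,d}(E)\rangle$.

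The main point, and the only substantive obstacle, is the difference with Lemma~\ref{m1}: taking $L=D$ forces $Q_1\in\langle\nu_{2,d}(A)\rangle$, so $Q_2'$ lies in the $2$-plane $\langle\nu_{2,d}(\{O\}\cup A_2)\rangle$ (a border-rank-$3$ stratum) rather than the $3$-plane $\langle\nu_{2,d}((2O\cap R)\cup A_2)\rangle$ (border rank $4$) available when $L\ne D$. Accordingly, Sylvester's classification yields the rank $d+2-3=d-1$ instead of $d+2-4=d-2$, and verifying this rank computation — in particular that $Q_2'$ is generic in its $2$-plane so that the apolar ideal admits a squarefree degree-$(d-1)$ generator whose roots avoid $O$ and $P_2$ — is the technical heart of the argument. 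The increase of exactly one in the rank is what makes $r_Y(P)=2d-1$ rather than $2d-2$.
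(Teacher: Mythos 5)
Your route is the paper's route: the lower bound $r_Y(P)\ge 2d-1$ comes from Lemma~\ref{m1}(b) exactly as you say (the $d$ points of $B\cap L'$ lie off $D\cup R$ because $L'\cap(D\cup R)=\{O\}$ and $O\notin B$), and the upper bound is the same two-step Sylvester construction, with $Q_1$ on the tangent line $\langle\nu_{2,d}(A_1)\rangle$ and the residual point $Q_2'$ pushed into $\langle\nu_{2,d}(\{O\}\cup A_2)\rangle$. Your detour through the $4$-dimensional space $\langle\nu_{2,d}(2O\cup A_2)\rangle$ and the $3$-plane $\langle\nu_{2,d}((2O\cap R)\cup A_2)\rangle$ is unnecessary: since $Q_1\in\langle\nu_{2,d}(A)\rangle$ and $\langle\nu_{2,d}(\{O\}\cup A_2)\rangle$ is a hyperplane of the $3$-space $\langle\nu_{2,d}(A)\rangle$ not containing $Q_1$ (their intersection with the tangent line is just $\nu_{2,d}(O)$), the line $\langle\{Q_1,P\}\rangle$ meets it in a unique point; this is how the paper phrases it.

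The one thing you defer (``the technical heart'') is genuinely needed and must be closed: Sylvester gives $r_{\nu_{2,d}(R)}(Q_2')=d-1$ with a computing set disjoint from $\{O,P_2\}$ only if $Q_2'$ does not lie in the span of a proper subscheme of $\{O\}\cup A_2$, i.e.\ $Q_2'\notin\langle\nu_{2,d}(\{O,P_2\})\rangle$ and $Q_2'\notin\langle\nu_{2,d}(A_2)\rangle$. Both checks are short. For the first: if $Q_2'\in\langle\nu_{2,d}(\{O,P_2\})\rangle$, then $P\in\langle\nu_{2,d}(B_1\cup\{O,P_2\})\rangle$ gives $r_{X_{2,d}}(P)\le d+2$, contradicting $r_{X_{2,d}}(P)=2d-2$ when $d\ge 5$ (for $d=4$ one instead notes that $B_1\cup\{O,P_2\}$ would then be a set of cardinality $2d-2$ computing $r_{X_{2,d}}(P)$ and containing $O$, against Lemma~\ref{m1}(b)). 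For the second: the map $Q_1\mapsto Q_2'$ is the linear projection from $P$, hence an isomorphism of the line $\langle\nu_{2,d}(A_1)\rangle$ onto a line through $\nu_{2,d}(O)$ inside $\langle\nu_{2,d}(\{O\}\cup A_2)\rangle$; since $\langle\nu_{2,d}(A_2)\rangle$ is a line of that plane not through $\nu_{2,d}(O)$, the two lines meet in at most one point, so a general $Q_1$ avoids it. With these two sentences added your argument is complete and coincides with the paper's proof.
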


\begin{proof}
Lemma \ref{m1} gives $r_{X_{2,d}} (P) = 2d-2$ and that no set $B\subset \mathbb {P}^2$ evincing
$r_{X_{2,d}} (P)$ is contained in $Y$. Hence it is sufficient to prove the existence of
$E\subset L\cup R$ such that $\sharp (E) =2d-1$, $O\notin E$, $O'\notin E$,
$\sharp (E\cap D)=d$, $\sharp (E\cap R)=d-1$
and $P\in \langle \nu _{2,d}(E)\rangle$. Fix a general $Q_1\in \langle \nu _{2,d}(A_1)\rangle$
and take $E_1\subset D\setminus \{O\}$ such that $Q_1\in \langle \nu _{2,d}(E_1)\rangle$ (Sylvester).
Notice that $\langle \nu_{2,d}(\{O\}\cup A_2)\rangle$ is a hyperplane
of the $3$-dimensional linear space $\langle \nu _{2,d}(A)\rangle$. Hence
the line $\langle \{Q_1,\nu _{2,d}(O')\}\rangle \subset \langle \nu _{2,d}(A)\rangle$
contains a point, $Q_2$, of $ \langle \nu_{2,d}(\{O\}\cup A_2)\rangle$. Since
$r_{X_{2,d}}(P) > d+2$, we have $Q_2\notin \langle \nu _d(\{O,O'\})\rangle$.
It is easy to check that we may find $Q_1$ such that $Q_2\notin \langle \nu _{2,d}(A_2)\rangle$.
Hence $r_{\nu _{2,d}(R)}(Q_2) =d-1$ (Sylvester). Take $B_2\subset R$ such that
$\nu _{2,d}(B_2)$ evinces $r_{\nu _{2,d}(R)}(Q_2)$. Sylvester's theorem also gives $B_2\cap \{O,O'\} =\emptyset$. Set $E:= B_1\cup B_2$.
\end{proof}

\begin{lemma}\label{m3lemma}
Fix a line $R\subset \mathbb {P}^2$, $O\in R$, and a connected 0-dimensional
and curvilinear scheme $A \subset \mathbb{P}^2$ such that $\deg (A)=4$, $A_{red} = \{O\}$ and $\deg (R\cap A)=3$.
Fix $P\in \langle \nu _{2,d}(A)\rangle$ such that $P\notin \langle \nu _{2,d}(A')\rangle$ for any
$A'\subsetneq A$. Then:

\quad (a) $r_{X_{2,d}} (P) =2d-2$.

\quad (b) Fix any $B\subset \mathbb {P}^2$ evincing $r_{X_{2,d}} (P)$. Then $O\notin B$, $\sharp
(B\cap R)=d-2$ and there is a line $L\subset \mathbb {P}^2$ such that $O\in L$, $L\ne R$,
$\sharp (B\cap L )=d$ and $B\subset L\cup R$.

\quad (c) Fix any line $L\subset \mathbb {P}^2$ such that $O\in L$ and $L\ne R$.
Then there is $B\subset \mathbb {P}^2$ evincing $r_{X_{2,d}} (P)$ and such that $\sharp (B\cap L)
= d$.
\end{lemma}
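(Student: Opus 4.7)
The statement mirrors Lemma \ref{m1} with the difference that the degree-$4$ non-reduced scheme $A$ is now connected and curvilinear, supported at a single point $O\in R$. I plan to adapt the proof of Lemma \ref{m1} to this setting. In local coordinates with $R=V(y)$, the scheme $A$ has ideal $(y-\phi(x),x^4)$ with $\phi(x)=ax^3+\cdots$ and $a\neq 0$ (the condition $a\neq 0$ encodes $\deg(A\cap R)=3$ rather than $4$). A direct check shows $I_R\cdot I_L\subset I_A$ for any line $L$ through $O$ with $L\neq R$, so $A\subset R\cup L$; moreover $\deg(A\cap L)=1$.

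For the lower bound in (a), I would suppose $r_{X_{2,d}}(P)\leq 2d-3$, giving $\deg(A\cup B)\leq 2d+1$, and apply Lemma \ref{bbb} to the pair $(A,B)$ to obtain a line $M$ with $A=F_2\sqcup(A\cap M)$, $B=F_2\sqcup(B\cap M)$, and $F_2\subset\mathbb{P}^2\setminus M$. Since $A$ is connected, supported at $O$, and non-reduced, while $B$ is reduced, $F_2$ must be empty and $A\subset M$; but a curvilinear scheme of degree $4$ with tangent $R$ is contained in a line only if the line equals $R$, which would force $\deg(A\cap R)=4\neq 3$, a contradiction.

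For the upper bound and part (c), fix any line $L\neq R$ through $O$, so $A\subset R\cup L$. Following Lemma \ref{m1}, I would pick a generic $Q_1$ on the tangent line $\langle\nu_{2,d}(2O\cap L)\rangle$ to $\nu_{2,d}(L)$ at $\nu_{2,d}(O)$ and use Sylvester's theorem to find $B_1\subset L\setminus\{O\}$ with $\sharp B_1=d$ and $Q_1\in\langle\nu_{2,d}(B_1)\rangle$. The line $\langle Q_1,P\rangle$ sits in a suitable linear space containing the osculating $2$-plane $\langle\nu_{2,d}(A\cap R)\rangle\subset\langle\nu_{2,d}(R)\rangle$ and meets that plane in a unique point $Q_2$; Sylvester applied to $\nu_{2,d}(R)$ yields $B_2\subset R\setminus\{O\}$ with $\sharp B_2=d-2$ and $Q_2\in\langle\nu_{2,d}(B_2)\rangle$. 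Setting $B:=B_1\sqcup B_2$ gives $\sharp B=2d-2$, $P\in\langle\nu_{2,d}(B)\rangle$, $\sharp(B\cap L)=d$, and $\sharp(B\cap R)=d-2$. Since $L$ was an arbitrary line through $O$ distinct from $R$, this simultaneously proves (a) and (c). For the characterization (b), any $B$ computing the rank has $\deg(A\cup B)=2d+2$ (the case $\leq 2d+1$ being excluded as in the lower-bound argument), so $A\cap B=\emptyset$ and in particular $O\notin B$; Lemma \ref{v1} together with \cite[Lemma 34]{bgi} reduces to the existence of a conic $T\supset A\cup B$, and a conic containing the curvilinear $A$ must be of the form $T=R\cup L$ with $L\neq R$ a line through $O$ (a smooth conic is ruled out by the explicit local ideal computation, and $T=2R$ would force $B\subset R$, bringing us back to the already-excluded line case). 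The exclusion of the line case then gives $\deg((A\cup B)\cap R)=3+\sharp(B\cap R)\leq d+1$ and $\deg((A\cup B)\cap L)=1+\sharp(B\cap L)\leq d+1$, so $\sharp(B\cap R)\leq d-2$ and $\sharp(B\cap L)\leq d$; combined with $\sharp B=\sharp(B\cap R)+\sharp(B\cap L)=2d-2$ (using $O\notin B$ and $R\cap L=\{O\}$), both inequalities must be equalities.

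I expect the main obstacle to be the rank computation in the upper-bound construction: verifying that the auxiliary point $Q_2$ has $\nu_{2,d}(R)$-rank exactly $d-2$, rather than the generic value $d-1$ of points on the osculating $2$-plane $\langle\nu_{2,d}(A\cap R)\rangle$. This should follow from an explicit Sylvester analysis of a binary form of shape $S^{d-3}g(S,T)$ for a cubic $g$ determined by $P$ and by the parameter in the join-decomposition ambiguity $P_L\mapsto P_L+\lambda\nu_{2,d}(O)$, $P_R\mapsto P_R-\lambda\nu_{2,d}(O)$; the specialization picking out the $d-2$ stratum (and the handling of non-generic exceptional $\lambda$) will require care.
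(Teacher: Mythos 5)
Your lower bound (via Lemma \ref{bbb} and the connectedness of $A$) and your analysis in part (b) (forcing $A\cup B$ onto a conic, which Bezout then forces to be $R\cup L$ with $O\in L$) are sound and match the paper. The gap is in the upper-bound construction, and it is exactly where you feared. You take $Q_2:=\langle \{P,Q_1\}\rangle \cap \langle \nu _{2,d}(A\cap R)\rangle$, but this intersection is generically empty: since $P\notin \langle \nu _{2,d}(A\cap R)\rangle$ and $\langle \nu _{2,d}(A\cap R)\rangle$ is a hyperplane of the $3$-dimensional space $\langle \nu _{2,d}(A)\rangle$, we have $\langle \{P\}\cup \nu _{2,d}(A\cap R)\rangle = \langle \nu _{2,d}(A)\rangle$; and $Q_1$ lies on the tangent line to $\nu _{2,d}(L)$, which meets $\langle \nu _{2,d}(A)\rangle$ only at $\nu _{2,d}(O)$, so for $Q_1\ne \nu _{2,d}(O)$ the line $\langle \{P,Q_1\}\rangle$ and the $2$-plane $\langle \nu _{2,d}(A\cap R)\rangle$ together span a $4$-dimensional space and are disjoint. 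Worse, even if you arranged such an intersection, no point of $\langle \nu _{2,d}(A\cap R)\rangle=\langle \nu _{2,d}(3O\vert _R)\rangle$ has $\nu _{2,d}(R)$-rank $d-2$: by Sylvester its points have rank $1$, $d$, or $d-1$ (border rank $1$, $2$, or $3$, the last computed by a non-reduced scheme giving rank $d+2-3$). So your count can only reach $d+(d-1)=2d-1$.

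The paper's fix, which your closing remark about forms of shape $S^{d-3}g(S,T)$ is already groping toward, is to enlarge the target: let $A_4=4O\vert _R$ be the degree $4$ divisor of $R$ at $O$ and work inside the $4$-dimensional space $\langle \nu _{2,d}(2O\cup A_4)\rangle$, which contains both $P$ (since $A\subset 2O\cup A_4$) and $Q_1$. There the line $\langle \{P,Q_1\}\rangle$ does meet the hyperplane $\langle \nu _{2,d}(A_4)\rangle$, i.e.\ the osculating $3$-plane of $\nu _{2,d}(R)$ at $\nu _{2,d}(O)$ --- precisely the locus of forms $S^{d-3}g(S,T)$ --- and an analogue of the Claim in Lemma \ref{m1} shows that for general $Q_1$ the resulting $Q_2$ lies in no $\langle \nu _{2,d}(F)\rangle$ with $F\subsetneq A_4$, whence $r_{\nu _{2,d}(R)}(Q_2)=d+2-4=d-2$ and the total is $d+(d-2)=2d-2$. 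Note also that the relevant subscheme of $A$ on the side of $R$ is the full $A_4$, not $A\cap R$: the point is that one must leave $\langle \nu _{2,d}(A)\rangle$ and exploit the larger osculating flag of $\nu _{2,d}(R)$ to reach the rank-$(d-2)$ stratum. As written, your argument proves only $r_{X_{2,d}}(P)\le 2d-1$ (even that needs the intersection point to exist), so parts (a) and (c) are not established.
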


\begin{proof}
Let $A_4\subset R$ be the degree $4$ effective divisor of $R$ with $O$ as
its support.
We modify the proof of Lemma \ref{m1} in the following way.
Notice that the scheme $2O\cup A_4$ has degree $5$. We use the
$4$-dimensional
linear space $\langle \nu _{2,d}(2O\cup A_4)\rangle$ instead of
$\langle \nu _{2,d}(2O\cup A_2)\rangle$ and the hyperplane $\langle \nu
_{2,d}(A_4)\rangle$
of $\langle \nu _{2,d}(2O\cup A)\rangle$ instead of $\langle \nu
_{2,d}(E_2\cup A_2)\rangle$.
\end{proof}

\begin{proposition}\label{la1.3} Assume $d\ge 4$. Take $ A = A_O\sqcup O_1\sqcup O_2\subset \mathbb{P}^2$ with $O_1\ne O_2$ two simple points of $\mathbb{P}^2$ and $A_O\subset \mathbb{P}^2$a degree $2$ non-reduced 0-dimensional scheme with support on a point $O\in L:=\langle O_1, O_2\rangle$ but $O\notin \{O_1, O_2\}$ and $\deg(A_O\cap L)=1$. Set $Z=\nu_{2,d}(A)$. Then
$$r_{X_{2,d}}(P) = d+2$$
for every $P\in \langle Z \rangle\setminus (\sigma_4^0(X_{2,d})\cup\sigma_3(X_{2,d}))$.\end{proposition}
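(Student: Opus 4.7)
The plan is to establish both inequalities $r_{X_{2,d}}(P) \le d+2$ and $r_{X_{2,d}}(P) \ge d+2$ by exploiting the special configuration of $A$.

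First I would record the geometric picture. Since $A_O$ is a non-reduced degree $2$ scheme at $O$ whose tangent direction is transverse to $L$ (from $\deg(A_O \cap L) = 1$), there is a unique line $D \ne L$ with $A_O \subset D$ as schemes, and $D \cap L = \{O\}$; in particular $O_1, O_2 \notin D$, so $A \subset L \cup D$. The minimality condition built into Notation \ref{A} forces $\dim \langle \nu_{2,d}(A)\rangle = 3$.

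For the upper bound I would work inside the $3$-plane $\Lambda = \langle \nu_{2,d}(A)\rangle$, which is spanned by the tangent line $\ell_1 = \langle \nu_{2,d}(A_O)\rangle$ and the two points $\nu_{2,d}(O_1), \nu_{2,d}(O_2)$. The plane $\Pi = \langle P, \nu_{2,d}(O_1), \nu_{2,d}(O_2)\rangle$ meets $\ell_1$ in a single point $Q$ (it cannot contain $\ell_1$, otherwise $\Pi$ would coincide with $\Lambda$). The hypothesis $P \notin \sigma_3(X_{2,d})$ prevents $Q = \nu_{2,d}(O)$, since otherwise $P$ would lie in the plane $\langle \nu_{2,d}(\{O, O_1, O_2\})\rangle$ and would have $X$-rank at most $3$. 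Because $Q$ lies on the tangent line to the rational normal curve $\nu_{2,d}(D)$ at $\nu_{2,d}(O)$ but is not the point of tangency, Sylvester's theorem furnishes a reduced set $E_1 \subset D$ with $\sharp(E_1) = d$, $O \notin E_1$, and $Q \in \langle \nu_{2,d}(E_1)\rangle$. The reduced set $E_1 \cup \{O_1, O_2\}$ has cardinality $d+2$ (disjoint union, since $O_1, O_2 \notin D$) and its Veronese image spans $P$.

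For the lower bound I would argue by contradiction. Assume $r_{X_{2,d}}(P) \le d+1$ and pick a reduced $B \subset \mathbb{P}^2$ with $\sharp(B) \le d+1$ computing the rank; then $\deg(A \cup B) \le d+5 \le 2d+1$ since $d \ge 4$, so the minimality conditions on $A$ and $B$ let me invoke Lemma \ref{bbb}. This yields a line $M$ and a reduced finite set $F_2 \subset \mathbb{P}^2 \setminus M$ such that $\deg(M \cap (A \cup B)) \ge d+2$ and $A = F_2 \sqcup (A \cap M)$, $B = F_2 \sqcup (B \cap M)$ as schemes. Since $F_2$ must be reduced while $A_O$ is not, the connected component $A_O$ cannot lie in $F_2$; hence $A_O \subset M$, forcing $M = D$ (the unique line containing $A_O$), and consequently $F_2 = \{O_1, O_2\}$ and $A \cap M = A_O$. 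Then $\deg(M \cap (A \cup B)) = \deg(A_O \cup (B \cap M)) \le \sharp(B \cap M) + 2$, so $\sharp(B \cap M) \ge d$, giving $\sharp(B) = 2 + \sharp(B \cap M) \ge d+2$, the desired contradiction.

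I expect the main technical point to be the combinatorial step in the lower bound that pins down $M = D$ and $F_2 = \{O_1, O_2\}$ solely from the fact that $F_2$ is reduced while $A_O$ is not; this uses in an essential way the scheme-theoretic nature of the decomposition in Lemma \ref{bbb}. The hypothesis $d \ge 4$ enters precisely to guarantee $\deg(A \cup B) \le 2d+1$, which is what allows Lemma \ref{bbb} to be applied.
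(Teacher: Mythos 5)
Your proof is correct and follows essentially the same route as the paper's: the upper bound comes from writing $P$ in the span of a rank-$d$ point of the tangent line $\langle \nu_{2,d}(A_O)\rangle$ together with $\nu_{2,d}(O_1)$ and $\nu_{2,d}(O_2)$, and the lower bound from applying Lemma \ref{bbb} after checking $\deg (A\cup B)\le d+5\le 2d+1$. The only (minor, and welcome) difference is in the final step: where the paper, having pinned down the line, invokes the Sylvester rank-$d$ statement on the rational normal curve $\nu_{2,d}(D)$ to force $\sharp \bigl((B\cap D)\setminus \{O\}\bigr)\ge d$, you extract the same count directly from the inequality $\deg \bigl(D\cap (A\cup B)\bigr)\ge d+2$ that is already part of the conclusion of Lemma \ref{bbb}, which is slightly more economical.
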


\begin{proof} Define $Z_O:=\nu_{2,d}(A_O)\subset X_{2,d}$. Every point $P'\in \langle \nu_{2,d}(Z_O)\rangle \setminus X_{2,d}$ has $X_{2,d}$-rank equal to $d$ (see \cite[Theorem 32]{bgi}).
Thus $r_X(P) \le d+2$ in this case. 
Assume $r_X(P) \le d+1$. Since $d+5 \le 2d+1$ (here we are using the hypothesis  $d \ge 4$), we may  apply Lemma \ref{bbb}. We get the existence
of a line $R\subset Y_2$ and of a set of points $F_2\subset Y_2\setminus R$ such that $\deg ((A\cap B)\cap R) \ge d+2$, $\sharp (F_2) \ge 1$, $B = ((B\setminus B\cap A_{red})\cap R)\sqcup F_2$, $A\cap R\ne \emptyset$, $B\cap R\ne \emptyset$, $B = (B\cap R)\sqcup F_2$ and $A = (A\cap R)\sqcup F_2$ where $B$ is as in Notation \ref{S}.
First assume $R=L$. Since $A_{red} \subset L$, we get $F_2 =\emptyset$ and hence $A\subset L$, a contradiction. Now
assume $R \ne L$. Thus $\{O\} = R\cap L$, $A_O$ is the degree $2$ effective divisor of $R$ supported by $O$ and $F_2 =
\{O_1,O_2\}$. Since $P\notin \langle \nu_{2,d}(O_1),\nu_{2,d}(O_2), \nu_{2,d}(O)\rangle$ (in fact we have assumed that $Z=\nu_{2,d}(A)$ evinces the $X$-border rank of $P$ and $\deg(Z)=4$), we have $\langle \nu_{2,d}(A\cap L)\rangle
\cap \langle \nu_{2,d}(A_O)\rangle
\subseteq \langle (B\cap R)\setminus \{O\}\rangle$. Since $r_{\nu_{2,d}(R)}(U) = d$ for all $U\in \langle \nu_{2,d}(A_O)\rangle \setminus \{O\}$ (see \cite{cs}), we get $\sharp ((B\cap R)\setminus \{O\})\ge d$. Thus $\sharp (B)\ge d+2$, a contradiction.
\end{proof}

\begin{remark}\label{2a1.3}
Take $m \ge 2$ and $A = A_O\sqcup O_1\sqcup O_2\subset \mathbb{P}^m$ with $A_O\subset \mathbb{P}^m$ connected and $\deg (A_O)=2$ and $O_1,O_2\in \mathbb{P}^m$. Notice that if $m>2$ we are not assuming that $A$ is contained in a plane. As in Proposition \ref{la1.3} if $P\in \langle \nu_{m,d}(A_O\sqcup O_1\sqcup O_2)\rangle\setminus (\sigma_4^0(X_{m,d})\cup\sigma_3(X_{m,d}))$, then
$$r_{X_{m,d}}(P) = d+2.$$
Let $L \subset \mathbb {P}^m$ be the line spanned by $A_O$. Set $\{O\}:= (A_O)_{red}$. Let $T$ be the tangent line to the degree $d$ rational normal curve $\nu_{m,d}(L)$ at $\nu_{m,d}(O)$.
The plane $\langle \{\nu_{m,d}(O_1),\nu_{m,d}(O_2),P\}\rangle$ intersects $T$ at a unique point $P_1$ and $P_1\ne \nu_{m,d}(O)$. Hence $r_{\nu_{m,d}(L)}(P_1)=d$.
Using Sylvester's algorithm (see \cite[\S 3]{bgi}) to find a set $S_1\subset \nu_{m,d}(L)$ evincing $r_{\nu_{m,d}(L)}(P_1)$. The set $S_1\cup \{\nu_{m,d}(O_1),\nu_{m,d}(O_2)\}$ evinces
$r_{ X_{m,d}}(P)$.
\end{remark}

This concludes our Subsection \ref{a} in which we were assuming the existence of a line $L\subset \mathbb{P}^2$ such that $\deg(A\cap L)\geq 3$.

\subsubsection{Here we assume $\deg (A\cap L) \le 2$ for every line $L\subset \mathbb{P}^2$}\label{b}

The assumption 
 ``~$\deg (A\cap L) \le 2$ for every line $L\subset \mathbb{P}^2$~'' is equivalent to the spannedness of the sheaf $\mathcal {I}_A(2)$.

Let's do first the case in which the generic conic $E\in |\mathcal{I}_A(2)|$ is smooth.

\begin{proposition}\label{lb1} Assume $d\ge 4$. Let $A\subset \mathbb{P}^2$ be a zero dimensional scheme of degree $4$ such that $\mathcal {I}_A(2)$ is spanned and $\sharp (A_{red}) \le 2$. Suppose that the general conic $E\in \vert \mathcal {I}_A(2)\vert$ is smooth.
Then$$r_{X_{2,d}(P)}=2d-2$$for every  $P\in \langle \nu _{2,d}(A) \rangle \setminus (\sigma_4^0(X_{2,d})\cup\sigma_3(X_{2,d}))$.
\end{proposition}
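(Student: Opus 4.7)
The plan is to sandwich $r_{X_{2,d}}(P)$ between two copies of $2d-2$. For the upper bound, the hypothesis yields a smooth conic $E \in |\mathcal{I}_A(2)|$ through $A$; then $C := \nu_{2,d}(E)$ is a rational normal curve of degree $2d$ in its linear span $\mathbb{P}^{2d}$ and $Z = \nu_{2,d}(A)$ is a degree $4$ subscheme of $C$. Since $C \subseteq X_{2,d}$ we have $\sigma_s(C) \subseteq \sigma_s(X_{2,d})$ and $\sigma_s^0(C) \subseteq \sigma_s^0(X_{2,d})$ for every $s$, so the hypotheses on $P$ translate into $P \in \sigma_4(C) \setminus (\sigma_3(C) \cup \sigma_4^0(C))$. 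Sylvester's classical stratification of the rank on secant varieties of rational normal curves (cf.\ \cite{cs}, \cite[Theorem 4.1]{lt}, \cite[\S 3]{bgi}) then gives $r_C(P) = 2d - 4 + 2 = 2d-2$, whence $r_{X_{2,d}}(P) \le r_C(P) = 2d-2$.

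For the reverse inequality I would argue by contradiction. Suppose $r_{X_{2,d}}(P) \le 2d-3$ and pick a reduced $B \subset \mathbb{P}^2$ with $\sharp(B) = r_{X_{2,d}}(P)$ and $P \in \langle \nu_{2,d}(B) \rangle$. Minimality of $A$ as a border-rank witness and of $B$ as a rank witness imply that $P$ lies in the span of no proper subscheme of $A$ or of $B$. The degree bound $\deg(A) + \deg(B) \le 4 + (2d-3) = 2d+1$ is borderline but sufficient to invoke Lemma \ref{bbb}, exactly as in the proof of Proposition \ref{la1.3} where the analogous check $d+5 \le 2d+1$ is used. Lemma \ref{bbb} then produces a line $L \subset \mathbb{P}^2$ and a finite set $F_2 \subset \mathbb{P}^2 \setminus L$ such that $A = F_2 \sqcup (A \cap L)$ as schemes.

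The contradiction now falls out from a case analysis on $\deg(A \cap L)$. Spannedness of $\mathcal{I}_A(2)$ forces $\deg(A \cap L) \le 2$. If $\deg(A \cap L) \le 1$, then $A \cap L$ is reduced and $A$ is a disjoint union of reduced points, contradicting the non-reducedness of $A$. If $\deg(A \cap L) = 2$, the non-reduced part of $A$ cannot lie in the reduced set $F_2$ and must therefore be concentrated in $A \cap L$; hence $A \cap L$ is a single double point on $L$, while $F_2$ contributes two further distinct reduced points off $L$, yielding $\sharp(A_{\mathrm{red}}) = 3$ and contradicting the hypothesis $\sharp(A_{\mathrm{red}}) \le 2$. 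Thus $r_{X_{2,d}}(P) \ge 2d-2$, and together with the upper bound this proves equality. The main obstacle is the borderline application of Lemma \ref{bbb}: the degree sum hits exactly the maximum value at which the lemma is available (which is why the hypothesis $d \ge 4$ is needed), and once this is secured the two hypotheses on $A$ are precisely what rule out every surviving configuration.
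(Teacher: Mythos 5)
Your proof is correct and follows essentially the same route as the paper's: the upper bound comes from viewing $P$ as a point of $\langle \nu_{2,d}(E)\rangle$ for the smooth conic $E$ and applying Sylvester's theorem to that degree $2d$ rational normal curve, and the lower bound comes from assuming $r_{X_{2,d}}(P)\le 2d-3$, noting $\deg(A\cup B)\le 2d+1$, and applying Lemma \ref{bbb} together with the Bezout bound $\deg(A\cap L)\le 2$ to force $\sharp(A_{red})\ge 3$, contradicting the hypothesis. Your case analysis on $\deg(A\cap L)$ is just a slightly more explicit unpacking of the paper's one-line counting argument.
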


\begin{proof}
Notice that
$Y:= \nu_{2,d}(E)$ is a degree
$2d$ rational normal curve in its linear span. Let $B\subset \mathbb{P}^2$ be defined as in Notation \ref{S}. Since $A\cup B\subset E$, we have $P\in \langle Y\rangle$. Since $r_Y(P) = 2d-2$
(see \cite{cs} or
\cite[Theorem 4.1]{lt}), we get $r_{X_{2,d}}(P)\leq 2d-2$. 

Assume $r_{X_{2,d}}(P) \le 2d-3$. Thus $\deg (A\cup B) \le
2d+1$. Take $L$ and
$F_2$ as in the statement of Lemma \ref{bbb}. Since $\deg (L\cap E) \le 2$, we have $\sharp (F_2) \ge 2$. Since $A$ is not reduced, we get $\sharp (A_{red}) \ge 3$, a contradiction.
\end{proof}

\begin{remark}\label{b2.1}
Assume $m \ge 2$ and that the scheme $A\subset \mathbb{P}^m$ is as in Proposition \ref{lb1}, i.e. it is contained in a smooth conic $E\subset \mathbb{P}^m$
and $\sharp (A)\le 2$. Set $Y:= \nu_{m,d}(E)$. In Proposition \ref{lb1} we proved that $r_Y(P)=2d-2$. Since one can use Sylvester's algorithm (see \cite[\S 3]{bgi}) to compute a set of points $S\subset C$ that evinces $r_Y(P)$, then one can use the same $S$ in order  to compute $r_{ X_{m,d}}(P)$, too.
\end{remark}

\begin{proposition}\label{lb2.4} Let $A\subset \mathbb{P}^2$ be a non-reduced 0-dimensional scheme of degree $4$ such that
$\mathcal {I}_A(2)$ is spanned. Moreover  suppose that a general conic $E\in |\mathcal{I}_A(2)|$ is not smooth: $E=L_1\cup
L_2$ with $L_1$ and $L_2$ lines and $L_1\ne L_2$. 
Assume $\sharp (A_{red})=3$. Fix any $P\in \langle \nu _{2,d}(A)
\rangle\setminus (\sigma_4^0(X_{2,d})\cup\sigma_3(X_{2,d}))$. If $d=3$, then $r_{X_{2,d}}(P) = 4$.
If $d\ge 4$, then $$r_{X_{2,d}}(P) = d+2.$$ 
\end{proposition}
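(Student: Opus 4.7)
\smallskip
\noindent\textbf{Proof plan for Proposition \ref{lb2.4}.}

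The starting point is to pin down the geometric structure of $A$. Since $\deg(A)=4$, $A$ is non-reduced and $\sharp(A_{\rm red})=3$, exactly one connected component of $A$ is non-reduced of degree $2$ and the other two are reduced; write $A = A_0 \sqcup \{O_1\}\sqcup\{O_2\}$ with $(A_0)_{\rm red}=\{O\}$ and let $L_1$ be the unique line containing $A_0$ (its Zariski tangent direction). The spannedness of $\mathcal{I}_A(2)$, i.e.\ $\deg(L\cap A)\le 2$ for every line $L$, immediately forces $O_1,O_2\notin L_1$ (else $\deg(L_1\cap A)\ge 3$) and $O\notin \langle O_1,O_2\rangle$ (else that line would carry $A_0$'s reduced point plus the two simple points, again $\deg\ge 3$). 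Thus $O,O_1,O_2$ are non-collinear and the reducible conic $L_1\cup L_2\in|\mathcal{I}_A(2)|$ of the hypothesis is $L_1=$ tangent to $A_0$ and $L_2=\langle O_1,O_2\rangle$. The case $d=3$ is quoted from \cite[Theorem 40]{bgi} (item (b) of Theorem \ref{theorem} forces rank $4$ on every point outside $\sigma_3$), so from now on I assume $d\ge 4$.

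For the upper bound $r_{X_{2,d}}(P)\le d+2$, I exploit that $\langle\nu_{2,d}(A)\rangle$ is a $\mathbb{P}^3$ (the components of $A$ are linearly independent for $d\ge 2$). Inside this $\mathbb{P}^3$, the line $\langle\nu_{2,d}(A_0)\rangle$ is the tangent line to the rational normal curve $\nu_{2,d}(L_1)$ at $\nu_{2,d}(O)$, and together with the two points $\nu_{2,d}(O_i)$ it spans the whole $\mathbb{P}^3$. Hence the plane $\Pi=\langle\nu_{2,d}(O_1),\nu_{2,d}(O_2),P\rangle$ (it is a plane because $P\notin\sigma_2(X_{2,d})$) meets $\langle\nu_{2,d}(A_0)\rangle$ in a single point $Q$. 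If $Q=\nu_{2,d}(O)$, then $P$ would lie in the plane spanned by $\nu_{2,d}(\{O,O_1,O_2\})$ and hence in $\sigma_3^0(X_{2,d})\subset\sigma_3(X_{2,d})$, excluded by hypothesis. Thus $Q$ is a non-trivial point on the tangent line to a degree $d$ rational normal curve; by Sylvester (\cite[\S 3]{bgi}) there is $S_1\subset L_1$ with $\sharp(S_1)=d$ and $Q\in\langle\nu_{2,d}(S_1)\rangle$. Then $P\in\langle\nu_{2,d}(S_1\cup\{O_1,O_2\})\rangle$, giving $r_{X_{2,d}}(P)\le d+2$.

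For the lower bound $r_{X_{2,d}}(P)\ge d+2$, I argue by contradiction. Suppose $r_{X_{2,d}}(P)\le d+1$ and let $B\subset\mathbb{P}^2$ be a reduced computing set (Notation \ref{B}). Since $d\ge 4$ we have $\deg(A\cup B)\le d+5\le 2d+1$, so Lemma \ref{bbb} produces a line $L$ and a reduced set $F_2\subset\mathbb{P}^2\setminus L$ with $\deg(L\cap(A\cup B))\ge d+2$ and $A=F_2\sqcup(A\cap L)$, $B=F_2\sqcup(B\cap L)$ as schemes. Because $F_2$ is reduced while $A_0$ is a non-reduced connected component, necessarily $A_0\subset L$, which identifies $L=L_1$. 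But then $O_1,O_2\notin L_1$ gives $A\cap L_1=A_0$ and $F_2=\{O_1,O_2\}$, so $\deg(L_1\cap A)=2$ and the inequality $\deg(L_1\cap(A\cup B))\ge d+2$ forces $\sharp(B\cap L_1)\ge d$. Hence $\sharp(B)=\sharp(F_2)+\sharp(B\cap L_1)\ge d+2$, contradicting $\sharp(B)\le d+1$.

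The main obstacle is correctly bookkeeping the scheme-theoretic intersections on the reducible conic so that the hypotheses of Lemma \ref{bbb} genuinely apply; the decisive leverage is the fact that the unique line carrying $A_0$ is \emph{forced} to be the line $L$ produced by the lemma, which collapses all other possibilities and yields the sharp count $\sharp(B)\ge d+2$.
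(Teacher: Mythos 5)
Your argument is correct and follows essentially the same route as the paper: the upper bound comes from writing $P$ in the span of a point of the tangent line $\langle\nu_{2,d}(A_0)\rangle$ (which has rank $d$ by \cite[Theorem 32]{bgi}/Sylvester) together with $\nu_{2,d}(O_1),\nu_{2,d}(O_2)$, and the lower bound comes from Lemma \ref{bbb} applied to $A\cup B$ of degree at most $d+5\le 2d+1$, exactly as in the paper's reference to the analogous Proposition \ref{la1.3}. The paper's own proof is just a terser version of this; your write-up supplies the details (the plane $\Pi$ meeting the tangent line, and the forced identification $L=L_1$, $F_2=\{O_1,O_2\}$) that the paper leaves implicit.
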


\begin{proof} 
Let $A_O$ be the non-reduced connected component of $A$ and $O_1,O_2$ the reduced ones.
Since $\nu_{2,d}(A_O)$ is a tangent vector of $X_{2,d}$, $r_{X_{2,d}}(P')=d$ for all $P'\in \langle \nu_{2,d}(A_O)\rangle \setminus (A_O)_{red}$ (\cite[Theorem 32]{bgi}). Thus $r_{X_{2,d}}(P) \le d+2$. First assume $d\ge 4$. Using Lemma \ref{bbb} we easily get that  $r_{X_{2,d}}(P)  \ge d+2$ (see Proposition \ref{la1.3} for a similar case). If $d=3$, then we use \cite[Theorem 40]{bgi} or \cite[Remark 2.3.1]{kl} and get the inequality $r_{X_{2,3}}(P) \ge 4$.
\end{proof}

\begin{remark}\label{iii}
Observe that in this Section \ref{S} we are assuming that  $\mathcal {I}_A(2)$
is  spanned, this implies that a general $E\in |\mathcal{I}_A(2)|$ is reduced (Bertini's theorem) and $A$ is the complete intersection of two general elements of $|\mathcal{I}_A(2)|$.
\end{remark}

\begin{lemma}\label{b2.1.1}
Assume $\mathcal {I}_A(2)$ spanned (equivalently, assume $\deg (A\cap L)\le 2$ for every line $L\subset \mathbb {P}^2$)
and that a general $E\in \vert \mathcal {I}_A(2)\vert$ is not smooth. Then $A$ is connected, a complete intersection
and not curvilinear.
\end{lemma}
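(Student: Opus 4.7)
The plan is to pin down the scheme $A$ by analysing the linear system $|\mathcal{I}_A(2)|$: I will show that every conic in it is singular at one common point $P_0\in A_{\mathrm{red}}$, which forces $2P_0\subseteq A$; then the two cases of Lemma~\ref{oo1} give the desired structure.

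First, since smoothness is an open condition on the irreducible linear system $|\mathcal{I}_A(2)|$, the hypothesis that a general element is not smooth forces \emph{every} $E\in|\mathcal{I}_A(2)|$ to be singular. Bertini's theorem (applied to the spanned sheaf $\mathcal{I}_A(2)$) ensures that the singular locus of a general element lies in the $0$-dimensional base locus $A_{\mathrm{red}}$; in particular a general $E$ cannot be a double line and must be a reduced pair of distinct lines $L_1\cup L_2$ with sole singular point $L_1\cap L_2\in A_{\mathrm{red}}$. The assignment ``general reducible $E\mapsto L_1\cap L_2$'' is a morphism from a dense open subset of the irreducible linear system to the finite set $A_{\mathrm{red}}$, hence constant: there exists $P_0\in A_{\mathrm{red}}$ such that a general $E$ is singular at $P_0$. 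Since being singular at $P_0$ is a closed linear condition on $H^0(\mathbb{P}^2,\mathcal{O}(2))$, it extends to all of $|\mathcal{I}_A(2)|$, so every conic through $A$ has a singularity at $P_0$.

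Next, by Remark~\ref{iii} $A$ is the scheme-theoretic intersection of two general $F_1,F_2\in|\mathcal{I}_A(2)|$, so the local ideal of $A$ at $P_0$ is $(F_1,F_2)$. Both $F_i$ lie in $\mathfrak{m}_{P_0}^2$ by the previous step, hence this local ideal is contained in $\mathfrak{m}_{P_0}^2$, i.e.\ $2P_0\subseteq A$. Any additional support point $P_1\neq P_0$ would produce the line $L=\overline{P_0P_1}$ with
\[
\deg(A\cap L)\ge \deg(2P_0\cap L)+\deg(\{P_1\})=2+1=3,
\]
violating $\deg(A\cap L)\le 2$. Therefore $A$ is supported only at $P_0$, so it is connected; and it strictly contains $2P_0$, so its embedding dimension at $P_0$ equals $2$ and $A$ is not curvilinear.

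Finally, Lemma~\ref{oo1} leaves only the two possibilities (ii) and (iii) for the local ideal of $A$. Case~(ii) is ruled out: in suitable local coordinates it gives ideal $(x^2,xy,y^3)$, so the line $\{x=0\}$ meets $A$ in degree~$3$, again contradicting the hypothesis (and it is not a complete intersection, contradicting Remark~\ref{iii}). Hence we are in case~(iii): $A$ is the complete intersection of two double lines $L_1^2=0$ and $L_2^2=0$ with $L_1,L_2$ linearly independent, yielding simultaneously that $A$ is connected, a complete intersection, and not curvilinear. The main obstacle is the constancy-of-the-singular-point argument: it requires Bertini to locate singularities in $A_{\mathrm{red}}$, the irreducibility of $|\mathcal{I}_A(2)|$, and the finiteness of $A_{\mathrm{red}}$; once that step is secured, the remaining deductions follow essentially from Lemma~\ref{oo1}.
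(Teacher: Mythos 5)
Your proof is correct and follows essentially the same route as the paper's: Bertini places the singular point of a general member of the pencil in $A_{\mathrm{red}}$, irreducibility of the pencil plus finiteness of $A_{\mathrm{red}}$ forces a common singular point $P_0$, and writing $A$ as the complete intersection of two members of the pencil (Remark \ref{iii}), both singular at $P_0$, yields $2P_0\subseteq A$ and hence connectedness, the complete-intersection property, and non-curvilinearity. Your closing appeal to Lemma \ref{oo1} to land in case (iii) is a harmless extra confirmation not present in the paper.
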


\begin{proof}
Since $E$ is not a double line (Remark \ref{iii}), $E$ has a unique singular point. Call it $Q$.
Since $\mathcal
{I}_A(2)$ is spanned,
$Q\in A_{red}$ (Bertini's theorem). Since $\vert \mathcal {I}_A(2)\vert$ is a pencil (and hence it is irreducible as an abstract
variety) and $A_{red}$ is finite, $Q\in \mbox{Sing}(F)$
for every $F\in \vert \mathcal {I}_A(2)\vert$. Take a general $F\in \vert \mathcal {I}_A(2)\vert \setminus \{E\}$. Both
$E$ and $F$ are reduced (Remark \ref{iii}). Since
no line is in the base locus of $\vert \mathcal {I}_A(2)\vert$, $E$ and $F$ have no common component. Since
$\deg (A)=4$, we get $A = E\cap F$ (scheme-theoretic intersection). Hence $A_{red} = \{P\}$ and $A$ is isomorphic to the scheme
defined around $(0,0)\in \mathbb {A}^2$ by two quadratic forms in two variables. Hence the Zariski tangent space of $A$ at $Q$
has dimension two.
\end{proof}

\begin{proposition}\label{lb2.1} Let $A\subset \mathbb{P}^2$ be a non-reduced zero dimensional scheme of degree $4$ such that $\mathcal {I}_A(2)$ is spanned
and a general $E\in |\mathcal{I}_A(2)|$ is not smooth. Write $E=L_1\cup L_2$ with $L_1$ and $L_2$ lines and $L_1\ne L_2$.

Assume $\sharp (A_{red})=1$.  Then$$r_{X_{2,d}}(P) = 2d-2$$for every $P\in \langle \nu _{2,d}(A) \rangle\setminus (\sigma_4^0(X_{2,d})\cup\sigma_3(X_{2,d}))$.\end{proposition}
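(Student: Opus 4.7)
The plan is to establish the matching bounds $r_{X_{2,d}}(P) \le 2d-2$ and $r_{X_{2,d}}(P) \ge 2d-2$ by exploiting the rigid structure of $A$. The structural setup is provided by Lemma \ref{b2.1.1}: under our hypotheses $A$ is connected, supported at a single point $Q = \mathrm{Sing}(E)$, non-curvilinear, and a complete intersection. Part (iii) of Lemma \ref{oo1} then gives local generators $L_1^2$, $L_2^2$ for the ideal of $A$, so $A$ sits inside the nodal conic $E = L_1 \cup L_2$ as a Cartier divisor with $\deg(A \cap L_i) = 2$ for $i = 1, 2$.

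For the upper bound, I would pass to the reducible curve $Y := \nu_{2,d}(E) = \nu_{2,d}(L_1) \cup \nu_{2,d}(L_2) \subset \mathbb{P}^{2d}$, a union of two degree-$d$ rational normal curves meeting only at $\nu_{2,d}(Q)$, with $\langle Y\rangle = \mathbb{P}^{2d}$. The degree-$4$ subscheme $Z := \nu_{2,d}(A)$ is supported at the node, is Cartier on $Y$, and meets each branch in length $2$. Since $Z$ computes the $X_{2,d}$-border rank of $P$ by Notation \ref{A}, the point $P$ lies in $\langle Z\rangle$ but in no proper subspan, so Proposition \ref{a1} applies and gives $r_Y(P) \le 2d-2$, whence $r_{X_{2,d}}(P) \le 2d-2$.

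For the lower bound, assume by contradiction that $r_{X_{2,d}}(P) \le 2d-3$ and take $B$ as in Notation \ref{B}. Then $\deg(A \cup B) \le 4 + (2d-3) = 2d+1$. By minimality, the border-rank-type hypothesis of Lemma \ref{bbb} is satisfied for the pair $(A,B)$, producing a line $R \subset \mathbb{P}^2$ and a reduced finite set $F_2 \subset \mathbb{P}^2 \setminus R$ with $A = F_2 \sqcup (A \cap R)$ as schemes. Since $A_{red} = \{Q\}$, one has $F_2 \subset \{Q\} \setminus R$. Two subcases occur: if $Q \in R$, then $F_2 = \emptyset$ and $A \subset R$, contradicting the spannedness of $\mathcal{I}_A(2)$ (which forces $\deg(A \cap R) \le 2 < 4$); if $Q \notin R$, then $A \cap R = \emptyset$ and $A = F_2$ is reduced, contradicting that $A$ is non-reduced. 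Either way we reach an absurdity, so $r_{X_{2,d}}(P) \ge 2d-2$.

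The main obstacle is the verification that Proposition \ref{a1} applies cleanly: one must see that the Cartier condition and the equality $\deg(Z \cap \nu_{2,d}(L_i)) = 2$ are both forced by the explicit complete intersection description $J = (L_1^2, L_2^2)$ coming from Lemma \ref{oo1}(iii), and that the two branches of $E$ at $Q$ are exactly the lines $L_1, L_2$ appearing as local factors of the ideal of $A$. Once this geometric identification is in place, the two bounds assemble immediately into the claimed equality $r_{X_{2,d}}(P) = 2d-2$.
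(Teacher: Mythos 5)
Your proof follows the same route as the paper's: the upper bound comes from Proposition \ref{a1} applied to the nodal curve $Y=\nu_{2,d}(L_1\cup L_2)$, and the lower bound from the degree count $\deg(A\cup B)\le 2d+1$ plus Lemma \ref{bbb} together with the connectedness and non-reducedness of $A$ (your two-case analysis of $F_2$ is in fact slightly more explicit than the paper's one-line dismissal). The structure is correct and both bounds are obtained as in the paper.

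There is, however, one factual slip in your closing paragraph. The components of $E$ are \emph{not} the lines cut out by the linear forms $L_1,L_2$ of Lemma \ref{oo1}(iii). If the local ideal of $A$ at $Q$ is $J=(\ell_1^2,\ell_2^2)$, then $\ell_1\ell_2\notin J$ (compare degree-two parts), so the conic $\{\ell_1\ell_2=0\}$ does not even contain $A$. The members of the pencil are $a\ell_1^2+b\ell_2^2$, and a general singular member factors as a product of two distinct linear forms, each a nontrivial combination of $\ell_1$ and $\ell_2$; so the branches of $E$ at $Q$ are two \emph{other} lines through $Q$, not $\{\ell_1=0\}$ and $\{\ell_2=0\}$. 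The facts you actually need survive this correction, and this is how the paper gets them: by Lemma \ref{b2.1.1} (and Remark \ref{iii}) $A=E\cap F$ is the complete intersection of two general members of the pencil, hence $A$ is a Cartier divisor of $E$; spannedness of $\mathcal{I}_A(2)$ gives $\deg(A\cap L_i)\le 2$ for $i=1,2$; and the Cartier condition at a node forces $\deg(A\cap L_1)+\deg(A\cap L_2)=\deg(A)=4$, whence both intersections have degree exactly $2$. With the hypotheses of Proposition \ref{a1} verified this way, your argument goes through exactly as the paper's does.
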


\begin{proof}
Since $\deg (A\cap L_i) \le 2$ for all $i$, we have $A_{red} = L_1\cap L_2$.
Since $A$ is a Cartier divisor of $E$, we may apply Proposition
\ref{a1}. Thus $r_{\nu_{2,d}(E)}(P) = 2d-2$. Hence $r_{X_{2,d}}(P) \le 2d-2$. Thus it is sufficient to prove $r_{X_{2,d}}(P) \ge 2d-2$. Assume $r_{X_{2,d}}(P) \le 2d-3$. Hence $\deg (A\cup B) \le 2d+1$ for $B$ as in Notation \ref{S}. Thus we may apply Lemma \ref{bbb}. Since $A$ is connected, $F_2=\emptyset$, contradicting the inequality
$\sharp (F_2) \ge m-1=1$.
\end{proof}

Now we can prove that the inequality in Proposition \ref{a1} is actually an equality.

\begin{corollary}\label{ca1}
Fix an integer $d\ge 3$. Let $Y\subset \mathbb {P}^{2d}$ be a reduced and connected curve union of two smooth
degree $d$ curves $Y_1$, $Y_2$, each of them a rational normal curve in its linear span, with a unique
common point point, $Q$, and with $\langle Y\rangle = \mathbb {P}^{2d}$. Let $Z\subset Y$ be a
0-dimensional scheme such that $\deg (Z)=4$, $Z_{red}= \{Q\}$, $Z$ is a Cartier divisor of $Y$ and $\deg(Z\cap Y_i)\geq 2$ for $i=1,2$. Fix $P\in \langle Z\rangle$ such that $P\notin \langle Z'\rangle$ for any $Z'\subsetneq Z$. Then
$$r_Y(P) = 2d-2.$$
\end{corollary}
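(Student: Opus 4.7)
The upper bound $r_Y(P)\le 2d-2$ is already supplied by Proposition \ref{a1}, so only the matching lower bound needs proof. My plan is to realize $Y$ as a plane Veronese image and invoke Proposition \ref{lb2.1}. Up to projective equivalence, any curve in $\mathbb{P}^{2d}$ that is the union of two degree-$d$ rational normal curves meeting at a single point and spanning the ambient $\mathbb{P}^{2d}$ coincides with $\nu_{2,d}(L_1\cup L_2)$, where $L_1,L_2\subset \mathbb{P}^2$ are two distinct lines meeting at a point $O$ with $\nu_{2,d}(O)=Q$. Under this identification $Z=\nu_{2,d}(A)$ for a degree-$4$ Cartier divisor $A$ of $L_1\cup L_2$ supported at $O$; the Cartier condition of Lemma \ref{z1}(iii), together with the hypothesis $\deg(Z\cap Y_i)\ge 2$ and $\deg(A)=4$, forces $\deg(A\cap L_i)=2$ for $i=1,2$.

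I would then check that $A$ satisfies the hypotheses of Proposition \ref{lb2.1}. A short local computation in $\mathbb{K}[[x,y]]$ with $x,y$ local equations of $L_1,L_2$ at $O$ shows that the ideal of $A$ has the form $(\alpha x^2+\beta y^2,xy)$ with $\alpha\beta\ne 0$, from which $\deg(A\cap L)\le 2$ for every line $L\subset\mathbb{P}^2$; hence $\mathcal{I}_A(2)$ is spanned. A general element of $|\mathcal{I}_A(2)|$ is the reducible conic $L_1\cup L_2$, so it is not smooth; $A$ is non-reduced; and $A_{\mathrm{red}}=\{O\}$ consists of a single point. To invoke \ref{lb2.1} one must also verify $P\notin \sigma_3(X_{2,d})\cup\sigma_4^0(X_{2,d})$. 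I would argue by contradiction: a hypothetical witness produces (reduced in the $\sigma_4^0$ case, Gorenstein in the $\sigma_3$ case by Remark \ref{Z++}) a scheme $A''\subset\mathbb{P}^2$ of degree at most $4$ with $P\in\langle\nu_{2,d}(A'')\rangle$ minimally. Lemma \ref{v1} then gives $h^1(\mathcal{I}_{A\cup A''}(d))>0$, and since $\deg(A\cup A'')\le 8\le 2d+1$ for $d\ge 4$, \cite[Lemma 34]{bgi} produces a line $R$ with $\deg(R\cap(A\cup A''))\ge d+2$. The bound $\deg(A\cap R)\le 2$ then forces $A''\subset R$, and a residuation of the type of Lemma \ref{c2} places $P$ inside the span of the proper subscheme $A\cap R$ of $A$, contradicting the minimality of $Z$. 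The boundary case $d=3$ is handled separately: there $2d-2=4$, and the minimality of $Z$ together with $P\notin\sigma_3(Y)\subset\sigma_3(X_{2,d})$ gives $b_Y(P)=4$, whence $r_Y(P)\ge 4=2d-2$ directly.

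With the hypotheses in place Proposition \ref{lb2.1} yields $r_{X_{2,d}}(P)=2d-2$, and the inclusion $Y\subset X_{2,d}$ gives $r_Y(P)\ge r_{X_{2,d}}(P)=2d-2$, matching the upper bound from Proposition \ref{a1}. I expect the main obstacle to be the verification $P\notin\sigma_3(X_{2,d})$, since this is precisely what licenses using Proposition \ref{lb2.1} as a black box; once this is handled, the reduction to the plane Veronese setting is a purely bookkeeping exercise.
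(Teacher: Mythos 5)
Your strategy coincides with the paper's own: the upper bound is Proposition \ref{a1}, the lower bound is obtained by bounding $r_{X_{2,d}}(P)$ from below and then using $Y\subset X_{2,d}$, and the engine behind that lower bound is the same in both cases, namely Lemma \ref{bbb} applied to $A\cup B$ together with the fact that $A$ is connected, non-reduced and not contained in a line. The difference is one of packaging: the paper extracts this argument from the \emph{proof} of Proposition \ref{la1.1} (a citation which, as written, refers to the configuration $\deg(A\cap L)=3$ rather than the present one, so only the embedded Lemma \ref{bbb} step transfers), whereas you invoke the \emph{statement} of Proposition \ref{lb2.1}, whose hypotheses you correctly check are the ones actually realized here (local ideal $(xy,\alpha x^2+\beta y^2)$ with $\alpha\beta\ne 0$, hence $\deg(A\cap L)\le 2$ for every line and $|\mathcal{I}_A(2)|$ a pencil of line pairs through $O$). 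The price of using the statement rather than the proof is the side condition $P\notin\sigma_3(X_{2,d})\cup\sigma_4^0(X_{2,d})$, and your verification of it for $d\ge 4$ (Lemma \ref{v1}, a line $R$ with $\deg(R\cap(A\cup A''))\ge d+2$, the bound $\deg(A\cap R)\le 2$, and a Lemma \ref{c2}-type residuation contradicting the minimality of $Z$) is sound.

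The one genuine gap is the case $d=3$. There you assert that the minimality of $Z$ gives $b_Y(P)=4$, but minimality only says $P\notin\langle Z'\rangle$ for $Z'\subsetneq Z$; it does not by itself exclude that $P$ lies in the span of a degree-$3$ subscheme of $Y$ (or of $X_{2,3}$) which is \emph{not} contained in $Z$, nor that $P$ is a limit of such points. This is precisely the kind of statement the rest of your argument is built to prove, and in fact your $d\ge 4$ argument for excluding $\sigma_3$ works verbatim at $d=3$: by Remark \ref{Z++} a point of $\sigma_3(X_{2,3})$ is minimally spanned by a scheme $A''$ of degree at most $3$, so $\deg(A\cup A'')\le 7=2d+1$; Lemma \ref{v1} and \cite[Lemma 34]{bgi} give a line $R$ with $\deg(R\cap(A\cup A''))\ge 5$, which with $\deg(A\cap R)\le 2$ forces $A''\subset R$ and $\deg(A\cap R)=2$; the residuation of Lemma \ref{c2} then puts $P\in\langle\nu_{2,3}(A\cap R)\rangle$, contradicting minimality. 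Since $P\notin\sigma_3(X_{2,3})$ already yields $r_{X_{2,3}}(P)\ge 4=2d-2$, no separate discussion of $\sigma_4^0$ is needed when $d=3$. With that substitution your proof closes.
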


\begin{proof} The inequality $r_Y(P) \leq 2d-2$ is proved in Proposition \ref{a1}. 
\\
In the proof of Proposition \ref{la1.1} we showed that if $\deg (A\cap L_i)=3$ for one $i =1,2$ (i.e. if $\deg(Z\cap Y_i)\geq 2$ for one $i=1,2$), then $r_{X_{2,d}}(P)\geq 2d-2$. 
Since $Y\subseteq X_{2,d}$, we have $r_Y(P) \ge r_{X_{2,d}}(P) \ge 2d-2$, concluding
the proof.
\end{proof}

\subsection{Three dimensional case}\label{m3}

Here we assume that $m=3$ and that the degree $4$ non-reduced 0-dimensional scheme $A\subset \mathbb{P}^3$ introduced in Notation \ref{Z} is not contained in any plane of $\mathbb{P}^3$, i.e., $\dim (\langle A\rangle )=3$.

\begin{remark}\label{r12} 
If $A\subset \mathbb{P}^3$ is the first infinitesimal neighborhood $2Q$ of some point $Q\in \mathbb{P}^3$ then, if $Z\subset X_{3,d}$ is as in Notation \ref{Z}, the linear span $\langle Z\rangle$   
is actually  the tangent space $T_{\nu_{3,d}(Q)}X_{3,d}$ of $X_{3,d}$ at $\nu_{3,d}(Q)$. Therefore, by \cite[Theorem 32]{bgi}, we have $r_{X_{3,d}}(P) = d$, but also that $P\in \sigma_2(X_{3,d})$.
\end{remark}

\begin{proposition}\label{3a} \red{Assume $d\ge 3$.} Let $U_1,U_2\subset \mathbb{P}^3$ be two disjoint non-reduced 0-dimensional schemes of degree $2$ such that $A:= U_1\sqcup U_2$ spans $\mathbb{P}^3$. Then
$$r_{X_{3,d}}(P)=2d$$
for every $P\in \langle \nu _{3,d}(A) \rangle \setminus (\sigma_4^0(X_{3,d})\cup \sigma_3(X_{3,d}))$.
\end{proposition}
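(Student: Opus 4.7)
The plan is to prove $r_{X_{3,d}}(P) = 2d$ by obtaining the upper bound directly from Proposition \ref{a2} applied to a reducible curve inside $X_{3,d}$, and then matching it from below by showing that any minimal reduced $B$ computing the $X_{3,d}$-rank must already lie on the two skew lines $L_i := \langle U_i\rangle$.

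For the upper bound: since $A = U_1 \sqcup U_2$ spans $\mathbb{P}^3$, the lines $L_1,L_2$ are skew. Set $Y_i := \nu_{3,d}(L_i)$, a rational normal curve of degree $d$ in a $d$-dimensional linear span, and $Y := Y_1 \cup Y_2 \subset X_{3,d}$. Because $L_1 \cap L_2 = \emptyset$, also $\langle Y_1\rangle \cap \langle Y_2\rangle = \emptyset$ and $\langle Y\rangle \cong \mathbb{P}^{2d+1}$. The subscheme $\nu_{3,d}(U_i)$ is the degree-$2$ Cartier divisor $2\nu_{3,d}(P_i)$ of $Y_i$, where $P_i := (U_i)_{\mathrm{red}}$, so $Z := \nu_{3,d}(A) \subset Y$ is exactly the subscheme to which Proposition \ref{a2} applies. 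Proposition \ref{a2} gives $r_Y(P) = 2d$, and since $Y \subset X_{3,d}$ we conclude $r_{X_{3,d}}(P) \le 2d$.

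For the lower bound I would argue by contradiction: suppose $r_{X_{3,d}}(P) \le 2d-1$ and let $B$ be as in Notation \ref{B} with $\sharp(B) \le 2d-1$. It suffices to show $B \subseteq L_1 \cup L_2$: in that case $\nu_{3,d}(B) \subset Y$ would be a reduced subset of size less than $2d$ spanning $P$, contradicting $r_Y(P) = 2d$. Minimality of $A$ and of $B$ forces $h^1(\mathcal{I}_A(d)) = h^1(\mathcal{I}_B(d)) = 0$, and Lemma \ref{v1} then gives $h^1(\mathcal{I}_{A \cup B}(d)) > 0$, with $\deg(A \cup B) \le 2d+3$. To squeeze $B$ onto $L_1 \cup L_2$ I would apply Lemma \ref{c2} to a general plane $H \supset L_1$: by genericity $A \cap H = U_1$, $\mbox{Res}_H(A) = U_2$, and $B \cap H = B \cap L_1$, so $\mbox{Res}_H(A \cup B) = U_2 \cup (B \setminus L_1)$ has degree at most $2d+1$ and (after at most one further residuation along a plane through $L_2$) satisfies $h^1(\mathcal{I}_{\mbox{Res}_H(A \cup B)}(d-1)) = 0$. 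Lemma \ref{c2}(a)--(b) then compresses $\langle \nu_{3,d}(A)\rangle \cap \langle \nu_{3,d}(B)\rangle$ into the image of $H$ plus at most one leakage point supported on $P_2$; varying $H$ in the pencil through $L_1$, and then running the symmetric argument with planes through $L_2$, forces every point of $B$ onto $L_1 \cup L_2$.

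The main obstacle is that last step: verifying the residual $h^1$-vanishings for every admissible split of the points of $B$ among $L_1$, $L_2$, and their complement, and then translating the conclusions of Lemma \ref{c2} into the scheme-theoretic containment $B \subseteq L_1 \cup L_2$. Once that containment is secured, Proposition \ref{a2} immediately closes the argument and yields $r_{X_{3,d}}(P) = 2d$.
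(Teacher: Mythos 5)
Your upper bound is exactly the paper's: since $A$ spans $\mathbb{P}^3$ the lines $L_i=\langle U_i\rangle$ are skew, $Y:=\nu_{3,d}(L_1)\cup\nu_{3,d}(L_2)$ satisfies the hypotheses of Proposition \ref{a2} with $Z=\nu_{3,d}(A)$ (the minimality of $Z$ follows from $P\notin\sigma_3(X_{3,d})$), and $r_Y(P)=2d$ gives $r_{X_{3,d}}(P)\le 2d$. That part is correct.

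The lower bound has a genuine gap, and it sits precisely where you flag ``the main obstacle''. There are two distinct problems. First, the hypothesis $h^1(\mathcal{I}_{\mathrm{Res}_H(A\cup B)}(d-1))=0$ required by Lemma \ref{c2} cannot be obtained ``after at most one further residuation'': Lemma \ref{c2} is a statement about a single hyperplane and does not iterate, and the residual scheme $U_2\cup(B\setminus B\cap L_1)$ has degree up to $2+ (2d-1)=2d+1$, which lies beyond the range $2(d-1)+1=2d-1$ in which \cite[Lemma 34]{bgi} classifies the failure of $h^1$-vanishing in degree $d-1$. Controlling exactly this situation --- residual schemes of slightly too large degree, with possible lines meeting $A\cup B$ in $\ge d+1$ or $\ge d+2$ points in several configurations --- is the entire content of the paper's proof (its steps (a), (b), (c), which combine the Eisenbud--Harris linearly-general-position theorem, \cite[Lemma 34]{bgi} applied to a maximizing plane and to various auxiliary planes and lines, and degree counts against $\deg(A\cup B)\le 2d+3$). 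Your proposal asserts the vanishing instead of proving it. Second, even where Lemma \ref{c2} does apply, its conclusion is not a containment $B\subseteq L_1\cup L_2$; it describes the intersection $\langle\nu_{3,d}(A)\rangle\cap\langle\nu_{3,d}(B)\rangle$, and in your setting it would in fact give something stronger and simpler: case (a) would force $P\in\langle\nu_{3,d}(U_1)\rangle$, contradicting the minimality of $A$, and case (b) would put $P$ in the span of a degree-$3$ subscheme of $A$, hence in $\sigma_3(X_{3,d})$. So the detour through $B\subseteq L_1\cup L_2$ (a statement the paper never establishes and does not need) is both unsupported by the lemma you invoke and unnecessary; the real difficulty is the case in which the residual $h^1$ does not vanish, and that case your proposal does not address.
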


\begin{proof}
Proposition \ref{a2} gives $r_{X_{3,d}}(P) \le 2d$. Here we
will prove the reverse inequality and hence that $r_{X_{3,d}}(P) =2d$ for  \red{$d\ge 3$.} Assume $r_{X_{3,d}}(P) \le 2d-1$ and take $B\subset X_{3,d}$ such that
$\nu _{3,d}(B)$ evinces $r_{X_{3,d}}(P)$. By assumption we have $\deg (A\cup B) \le 2d+3$. Let $M \subset \mathbb {P}^3$ be a plane such that
$\deg (M\cap (A\cup B))$ is maximal. Consider the residual exact sequence (\ref{eqi1}) with $t=d$, $H = M$ and $E = A\cup B$.  Since
$h^1(\mathbb {P}^3,\mathcal {I}_{A\cup B}(d)) >0$ (Lemma \ref{v1}), we get that either $h^1(\mathcal {I}_{\mbox{Res}_M(A\cup B)}(d-1))>0$
or $h^1(M,\mathcal {I}_{M\cap (A\cup B}(d)) >0$. Since $h^1(\mathbb {P}^3,\mathcal {I}_{A\cup B}(d)) >0$ and $\deg (A\cup B) \le 2d+3 \le 3d+1$, $A\cup B$ is not
in linearly general position (\cite[Theorem 3.2]{eh}). Hence $\deg (M\cap (A\cup B)) \ge 4$.

\quad (a) In this step and in the next one we assume 
$h^1(M,\mathcal {I}_{(A\cup B)\cap M,M}(d)) >0$, i.e. $\langle \nu _{3,d}(A\cap M)\rangle \cap \langle \nu _{3,d}(B\cap M)\rangle \ne \emptyset$. In this step we also assume $\deg ((A\cup B)\cap M)\ge 2d+2$.
Since $A\cup B$ spans $\mathbb {P}^3$ and $\deg (A\cup B) \le 2d+3$, we get $\deg ((A\cup B)\cap M)=2d+2$ and
that
$\mbox{Res}_M(A\cup B)$ is a reduced point, say $Q$. Since $P\in \langle Z\rangle \cap \langle S\rangle$, to compute $r_{X_{3,d}}(P)$ we cannot
use a smaller number of variables (see \cite[Exercise 3.2.2.2]{l}, or \cite[Theorem 2.1]{bl},  for a generalization in the non-symmetric case). Thus $Q\in A_{red}\cap B_{red}$. Thus $\deg (A\cup B)\le \deg (A) +\deg (B\setminus
\{Q\}) \le 2d+2$. Since $\deg ((A\cup B)\cap M) \ge 2d+2$, we get $A\cup B \subset M$, a contradiction.

\quad (b) Here we assume $h^1(M,\mathcal {I}_{(A\cup B)\cap M,M}(d)) >0$
and $\deg ((A\cup B)\cap M)\le 2d+1$.  
Since $h^1(M,\mathcal {I}_{(A\cup B)\cap M,M}(d)) >0$ and $\deg ((A\cup B)\cap M) \le 2d+1$, there is a line
$L$ such that $\sharp ((A\cup B)\cap L) \ge d+2$ (\cite[Lemma 34]{bgi}). Since $\nu _{3,d}(B)$ is linearly independent, we have $\sharp (B\cap L) \le d+1$. Since $A$ spans $\mathbb {P}^3$ and $\deg (A)=4$, we have
$\deg (A\cap R)\le 2$ for every line $R\subset \mathbb {P}^3$. Therefore $\deg (A\cap L) \le 2$ and $d \le \deg (B\cap L) \le d+1$. Assume for the moment $\mbox{Res}_M(A)\cap \mbox{Res}_M(B)=\emptyset$. Since
$P\notin \langle \nu _{3,d}(M)\rangle$, Lemma \ref{c2} gives $h^1(\mathbb {P}^3,\mathcal {I}_{\mbox{Res}_M(A\cup
B)}(d-1))>0$. Hence $\deg (\mbox{Res}_M(A\cup B)) \ge d+1$. Since $\deg (A\cup B)\le 2d+2$ and $\deg ((A\cup B)\cap M) \ge
d+2$, we obtained a contradiction. Now assume $\mbox{Res}_M(A)\cap \mbox{Res}_M(B) \ne \emptyset$. Since
$\nu _{3,d}(B)$ is linearly independent, we must have $A\cap M \ne \emptyset$. Hence $M$ meets exactly one of the connected
components of $A$ and $B$ contains the support of the other connected component of $A$, say $(U_1)_{red} \in M$,
$(U_2)_{red}
\notin M$ and $(U_2)_{red}\in B$. Lemma \ref{c2} gives that $P$ is the linear span of $\langle \nu _{3,d}(A\cap M)\rangle$ and
the point $\nu _{3,d}((U_2)_{red})$. Hence $P\in \sigma _3(X_{3,d})$, a contradiction.

\quad (c) Now assume $h^1(M,\mathcal {I}_{(A\cup B)\cap M,M}(d))=0$.
Hence $h^1(\mathcal {I}_{\mbox{Res}_M(A\cup B)}(d)) > 0$ by the residual exact sequence. Since $\deg ((A\cup B)\cap M)\ge 4$, we have
$\deg (\mbox{Res}_M(A\cup B)) \le 2(d-1)+1$. Hence \cite[Lemma 34]{bgi},  gives the existence of a line $L\subset \mathbb {P}^3$ such that $\deg (L\cap \mbox{Res}_M(A\cup B)) \ge (d-1)+2$.
Since $\mbox{Res}_M(A\cup B)\subseteq A\cup B$, we get $\deg ((A\cup B)\cap L) \ge d+1$. Since $h^1(M,\mathcal {I}_{(A\cup B)\cap M,M}(d)) =0$, we have
$\deg ((A\cup B)\cap L) = d+1$. Let $N \subset \mathbb {P}^3$ be a general plane containing
$L$. Since $A\cup B$ is curvilinear and $(A\cup B)_{red}$ is finite, we have $N\cap (A\cup B) = L\cap (A\cup B)$ (as schemes). Hence $h^1(N,\mathcal {I}_{N\cap (A\cup B)}(d))=0$. The residual exact
sequence of $N$ gives $h^1(\mathcal {I}_{\mbox{Res}_N(A\cup B)}(d-1)) >0$. Since $\deg (\mbox{Res}_N(A\cup B)) \le 2d+3-d-1\le 2(d-1)+1$, there is
a line $T\subset \mathbb {P}^3$ such that $\deg (T\cap \mbox{Res}_N(A\cup B))\ge d+1$. Since $B$ is reduced and $L\subset N$, we have $T\nsubseteq N$ and in particular
$T\ne L$. Hence $\sharp (B\cap T \setminus B\cap T\cap L) \ge d-2$. Fix any $o\in B\cap T \setminus (T\cap B\cap T\cap L)$. Let $N_o$ be the plane spanned by $L$ and $o$. If $\deg (N_o\cap (A\cup B)) \ge 2d+2$, part (a) gives a contradiction.
Hence we may assume $\deg (N_o\cap (A\cup B)) \le 2d+1$. First assume $h^1(N_o,\mathcal {I}_{N_o\cap (A\cup B)}(d)) =0$. The residual
sequence of $N_o$ gives $h^1(\mathcal {I}_{\mbox{Res}_{N_o}(A\cup B)}(d-1)) >0$. Hence there is a line $T_o$ such that $\deg (T_o\cap \mbox{Res}_{N_o}(A\cup B) )\ge d+1$.We may assume $\deg (T_o\cap (A\cup B))=d+1$ (e.g. by parts (a) and (b)). Since $B$ is reduced, and $o\in B\cap N_o$, we
have $o\notin T_o$. Since $N_o\supset L$, we have $T_o\ne L$. We get $\deg (A\cup B) \ge 3(d+1)-2$, a contradiction. Now assume $h^1(N_o,\mathcal {I}_{N_o\cap (A\cup B)}(d)) > 0$. Since $\deg (N_o\cap (A\cup B) \le 2d+1$,
there is a line $D_o \subset N_o$ such that $\deg (D_o\cap (A\cup B)) \ge d+2$. Hence $D_o\ne L$ and $D_o\ne T$. The lines $L$, $T$ and $D_o$ give $\deg (A\cup B) > 2d+3$, a contradiction.\end{proof}

\begin{remark}\label{3aRemark}
Assume, for $m > 2$, that the 0-dimensional scheme $A\subset \mathbb{P}^m$ of Notation \ref{Z} has two connected components, $A_1, A_2\subset \mathbb{P}^m$, both of degree $2$ and that the lines $L_i:= \langle A_i\rangle$, $i=1, 2$,
are disjoint. Thus $\dim (\langle L_1\cup L_2\rangle )=3$. Set $Y_i:= \nu_{m,d}(L_i)$, $i=1,2$, and $Y:= Y_1\cup Y_2$. Notice that $Y_1\cap Y_2=\emptyset$. Now let $Z\subset X_{m,d}$ be defined as in Notation \ref{Z} as a scheme that evinces the $X_{m,d}$-border rank of a point $P\in \langle Z \rangle\setminus (\sigma_4^0(X_{m,d})\cup \sigma_3(X_{m,d}))$.  By \cite[Proposition 3.1]{ls},  or \cite[Subsection 3.2]{lt},  $r(X_{m,d})(P) = r_{Y_1\cup Y_2}(P)$.
We proved in Proposition \ref{a2} that $r_{X_{m,d}}(P) = 2d$ and that it may be evinced by a set $S\subset Y$ such that $\sharp (S\cap Y_i)=d$, $i=1,2$. The set $S$ may be found in the following way
(here we just translate the proof of Proposition \ref{a2}): 

\quad Step 1. Set $P_2:= \langle \{P\}\cup Y_1\rangle \cap \langle Y_2\rangle$ and $P_1:= \langle \{P\}\cup Y_2\rangle \cap \langle Y_1\rangle$.

\quad Step 2. Find $S_i\subset Y_i$ evincing the $Y_i$-rank of $P_i$ (e.g. use Sylvester's algorithm \cite{cs}, \cite{bgi}, \cite{cglm} and \cite{bcmt}). 

\quad Step 3. Set $S:= S_1\cup S_2$.
\end{remark}

\begin{proposition}\label{3bp}
\red{Assume $d\ge 3$.} Let $A\subset \mathbb{P}^3$ be a degree 4 curvilinear 0-dimensional scheme with support at only one point and such that $\langle A \rangle =\mathbb{P}^3$. Then
$$r_{X_{3,d}}(P)=3d-2$$ 
for all $P\in \langle \nu _{3,d}(A) \rangle \setminus (\sigma_4^0(X_{m,d})\cup \sigma_3(X_{3,d}))$.
\end{proposition}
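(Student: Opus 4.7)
The plan is to prove $r_{X_{3,d}}(P) = 3d-2$ by sandwiching it between an upper bound that reduces to Sylvester's theorem on a twisted cubic and a matching lower bound obtained by a residual-sequence case analysis patterned on Proposition \ref{3a}.

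\textbf{Upper bound.} I would first exhibit a smooth rational normal curve $C\subset \mathbb{P}^3$ (a twisted cubic) containing $A$. Such a $C$ exists because $A$, being curvilinear of degree $4$ supported at a single point $O\in \mathbb{P}^3$ and spanning $\mathbb{P}^3$, is a length-$4$ jet along a smooth formal curve germ at $O$, and this germ can be interpolated by a genuine twisted cubic (a dimension count in the $12$-dimensional Hilbert scheme of twisted cubics shows nonempty incidence with $A$). Setting $Y := \nu_{3,d}(C)$, we obtain a rational normal curve of degree $3d$ in a $(3d)$-dimensional linear span, with $\nu_{3,d}(A)\subset Y$ a length-$4$ scheme supported at a single point. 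Sylvester's theorem (\cite{cs}, \cite[Theorem 4.1]{lt}, \cite[\S 3]{bgi}) then gives $r_Y(P)=3d-4+2=3d-2$, and since $Y\subset X_{3,d}$ this yields $r_{X_{3,d}}(P)\le 3d-2$; in fact a set computing it can be produced explicitly by the Sylvester algorithm applied on $Y$.

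\textbf{Lower bound.} Suppose for contradiction $r_{X_{3,d}}(P)\le 3d-3$ and let $B\subset \mathbb{P}^3$ be as in Notation \ref{B}, so that $\sharp(B)\le 3d-3$ and $\deg(A\cup B)\le 3d+1$. Lemma \ref{v1} gives $h^1(\mathbb{P}^3,\mathcal{I}_{A\cup B}(d))>0$. I would imitate the plane/line iteration from Proposition \ref{3a}: pick a plane $M\subset \mathbb{P}^3$ maximizing $\deg(M\cap (A\cup B))$, apply the residual sequence (\ref{eqi1}) with $t=d$ and $H=M$, and split according to which of the two cohomologies is nonzero. In each case \cite[Lemma 34]{bgi} supplies either a line $L$ with $\deg(L\cap (A\cup B))\ge d+2$ or a conic with at least $2d+2$ scheme-theoretic points of $A\cup B$. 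The spanning condition $\langle A\rangle=\mathbb{P}^3$ together with the curvilinearity of $A$ give $\deg(A\cap L)\le 2$ and $\deg(A\cap M)\le 3$ for every line $L$ and every plane $M$, so most of the degree on these special loci has to come from the reduced set $B$.

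\textbf{Main obstacle.} The delicate step is organizing the iteration cleanly. After extracting one special line $L$, one would pass to a general plane $N\supset L$, redo the residual argument on $\mathrm{Res}_N(A\cup B)$, and produce a further line $T\ne L$ carrying at least $d+1$ scheme-theoretic points of $A\cup B$. Two such distinct lines, or absorption of $A\cup B$ into a plane conic or cubic, push $\deg(A\cup B)$ past $3d+1$, a contradiction. The single surviving configuration is $A\cup B\subset C'$ for some twisted cubic $C'$; but then $\nu_{3,d}(C')$ is again a rational normal curve of degree $3d$, $P$ lies in the span of a length-$4$ subscheme of it supported at a single point, and Sylvester's theorem applied on $\nu_{3,d}(C')$ forces $\sharp(B)\ge 3d-2$, contradicting $\sharp(B)\le 3d-3$. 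This rules out every configuration and yields $r_{X_{3,d}}(P)\ge 3d-2$, completing the proof.
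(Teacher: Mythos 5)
Your upper bound coincides with the paper's: $A$ spans $\mathbb{P}^3$ and is connected and curvilinear, hence is a degree $4$ divisor of a twisted cubic $C$, so $\nu_{3,d}(C)$ is a degree $3d$ rational normal curve containing $Z$ and Sylvester gives $r_{X_{3,d}}(P)\le r_{\nu_{3,d}(C)}(P)=3d-2$. The lower bound, however, is where essentially all the work lies, and your sketch has a genuine gap there: the two mechanisms you propose for reaching a contradiction do not close the argument. First, two distinct lines $L$ and $T$ with $\deg(L\cap(A\cup B))\ge d+2$ and $\deg(T\cap(A\cup B))\ge d+1$ only force $\deg(A\cup B)\ge (d+2)+(d+1)-1=2d+2$, which is $\le 3d+1$ for every $d\ge 1$; so extracting two special lines does not by itself push $\deg(A\cup B)$ past $3d+1$. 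Second, the asserted ``single surviving configuration'' $A\cup B\subset C'$ for a twisted cubic $C'$ is both unjustified and vacuous: a subscheme of $C'$ of degree $\le 3d+1$ maps under $\nu_{3,d}$ into a degree $3d$ rational normal curve and is therefore linearly independent, so it would contradict $h^1(\mathcal{I}_{A\cup B}(d))>0$ from Lemma \ref{v1} before Sylvester is ever invoked; it is not the configuration that survives the first round of residual arguments.

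What actually survives, and what the paper spends its proof eliminating, are configurations such as a line $R$ with $\deg(R\cap(A\cup B))=d+1$ meeting $A$ in degree $\le 2$ together with a residual scheme that again fails to impose independent conditions in degree $d-1$. The paper handles this by iterating planes $M_1,M_2,\dots$ each maximizing $x_i=\deg(M_i\cap E_i)$ with $E_{i+1}=\mathrm{Res}_{M_i}(E_i)$, introducing the first index $z$ with $h^1(M_z,\mathcal{I}_{M_z\cap E_z}(d+1-z))>0$, showing $z\in\{1,2,3\}$ or $z\ge d$, and then disposing of $z=1$, $z=2$, $z=3$, $z=d$, $z=d+1$ separately; the hardest cases ($z=1$ and $z=2$) require auxiliary pencils of planes through a special line and counts of the form $\deg(A\cup B)\ge d+1+(d-1)d$. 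None of this bookkeeping is present in your proposal, and since the shortcuts offered in its place are numerically insufficient, the inequality $r_{X_{3,d}}(P)\ge 3d-2$ is not established. If you want a genuinely different route, you would need either a new structural lemma replacing the $z$-induction or a direct argument that $B$ must lie on a curve through $A$; as written, the lower bound is a plan rather than a proof.
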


\begin{proof}
Since $A$ spans $\mathbb {P}^3$, it is projectively
equivalent to a connected degree $4$ divisor of a smooth rational normal curve $Y$ of $\mathbb {P}^3$. Thus $r_{X_{3,d}}(P) \le r_{\nu _{3,d}(Y)}(P) =3d-2$ (\cite{cs}). 
In order to obtain a contradiction we assume $r_{X_{3,d}} (P)\le 3d-3$.

Take $B \subset \mathbb {P}^3$ such that $\nu _{3,d}(B)$ evinces $r_{X_{3,d}}(P)$. We have $\deg (A\cup B)=4+r_{X_{3,d}} (P) -\deg(A\cap B)
\le 3d+1$. Lemma \ref{v1} gives $h^1(\mathbb {P}^m,\mathcal {I}_{A\cup B}(d)) >0$. Hence $A\cup B$ is not in linearly general
position (see \cite[Theorem 3.2]{eh}). Thus there is a plane $M\subset \mathbb {P}^3$ such
that $\deg (M\cap (A\cup B))\ge 4$. Among all such planes we take one, say $M_1$, such that $x_1:= \deg (M_1\cap (A\cup B))$ is maximal. Set
$E_1:= A\cup B$ and $E_2:= \mbox{Res}_{M_1}(E_1)$. Notice that $\deg (E_2)=\deg (E_1)-x_1$. Define
inductively the planes $M_i \subset \mathbb {P}^3$, $i \ge 2$, the schemes $E_{i+1}$, $i\ge 2$, and the integers $x_i$,
$i\ge 2$, by the condition that
$M_i$ is one of the planes such that $x_i:= \deg (M_i\cap E_i)$ is maximal and then set $E_{i+1}:= \mbox{Res}_{M_i}(E_i)$. We
have
$E_{i+1}\subseteq E_i$ (with strict inclusion if $E_i \ne \emptyset$) for all $i\ge 1$ and $E_i=\emptyset$ for all $i \gg 0$.
For all integers
$t$ and
$i
\ge 1$ there is the residual exact sequence
\begin{equation}\label{eqd1}
0 \to \mathcal {I}_{E_{i+1}}(t-1) \to \mathcal {I}_{E_i}(t) \to \mathcal {I}_{E_i\cap M_i,M_i}(t) \to 0
\end{equation}
Let $z$ be the minimal integer $i$ such that $1 \le i\le d+1$ and $h^1(M_i,\mathcal {I}_{M_i\cap E_i}(d+1-i))>0$.
Use at most $d+1$ times the exact sequences (\ref{eqd1}) to prove the existence of such an integer $z$. We now study the different possibilities that we have for the integer $z$ just defined.

 \quad (a) Here we assume $z=1$. Since $\nu _{3,d}(B)$ is linearly independent and $h^1(M_1,\mathcal {I}_{(A\cup B)\cap
M_1}(d))>0$,
we have $A_{red}\in M_1$. Since $B$ is reduced, we get $\mbox{Res}_{M_1}(A)\cap
\mbox{Res}_{M_1}(B) =\emptyset$. Since $P\notin \langle \nu _{3,d}(M_1)\rangle$,
Lemma
\ref{c2} gives
$h^1(\mathbb {P}^3,\mathcal {I}_{E_2}(d-1)) >0$. Hence $x_2 \ge d+1$. Since by hypothesis \red{$d \ge 3$}, $x_2\le x_1$ and $x_1+x_2
\le 3d+1$, we have $x_2\le 2d-1$. Hence there is a line $R\subset \mathbb {P}^3$ such that $\deg (E_2\cap R)\ge d+1$ (\cite[Lemma 34]{bgi}).
Hence $x_1\le 2d$. Since $h^1(M_1,\mathcal {I}_{(A\cup B)\cap
M_1}(d))>0$, \cite[Lemma 34]{bgi}, gives the existence of a line $L\subset M_1$ such that
$\deg (L\cap (A\cup B))\ge d+2$. Since $\nu _{3,d}(B)$ is linearly independent, we get
$A\cap L \ne \emptyset$. Since $B$ is reduced, $\deg (A\cap T)\le 2$ for any line $T\subset \mathbb {P}^3$ and $\deg (R\cap
\mbox{Res}_{M_1}(A\cup B))\ge 3$, we have $R\nsubseteq M_1$ and in particular $R\ne L$. First assume $R\cap L \ne \emptyset$.
Since $\deg ((A\cup B)\cap \langle L\cup R\rangle )\ge 2d+1$, we have $x_1\ge 2d+1$. Hence $x_2\le d$, a contradiction.
Now assume $L\cap R = \emptyset$. In particular we have $A_{red}\notin R$. Hence $\sharp (R\cap B)\ge d+1$.
Since $\nu _{3,d}(B)$ is linearly independent, we have $\sharp (R\cap B)=d+1$. Fix any $Q\in R\cap B$. Let
$H\subset \mathbb {P}^3$ be the plane spanned by $L$ and by $Q$. Since $A_{red}\in L$ and $B$ is reduced, we have
$\mbox{Res}_H(A)\cap \mbox{Res}_H(B)=\emptyset$. Lemma \ref{c2} gives $h^1(\mathcal {I}_{\mbox{Res}_H(A\cup B)}(d-1))>0$.
Since $\deg ((A\cup B)\cap H)\ge d+3$, we have $\deg (\mbox{Res}_H(A\cup B))\le 2d-2 \le 2(d-1)+1$. Hence
there is a line $R'\subset \mathbb {P}^3$ such that $\deg (R'\cap (\mbox{Res}_H(A\cup B)) \ge d+1$. Since
$L\subset H$ and $B$ is reduced, we have $R'\ne L$. Since $\deg ((A\cup B)\cap R)=d+1$ and $H$ contains
one of the points of $R$, we have $R'\ne R$. If $R' \cap L\ne \emptyset$, using the plane
$\langle L\cup R'\rangle$ we get
$x_1\ge 2d+1$ and $x_2\ge d+1$, a contradiction. If $R'\cap L =\emptyset$, then $\deg (A\cup B)\ge d+2+2d+1$, a contradiction.

\quad (b) From now on we assume $z>1$.
Since $h^1(M_z,\mathcal {I}_{M_z\cap E_z}(d+1-z))>0$, we have
$x_z:= \deg (M_z\cap E_z) \ge d+3-z$. Since the function $z\mapsto x_z$ is non-increasing,
we get $x_i\ge d+3-z$ for all $i\in \{1,\dots ,z+1\}$. Since $\deg (A\cup B) \ge z(d+3-z)$, we get
$3d+1 \ge z(d+3-z)$. Hence either $z\in \{2,3\}$ or $z\ge d$  \red{(this statement is trivially true if $d=3,4$).}

\quad (c) Assume $z=d$.
The condition
$h^1(\mathcal {I}_{M_d\cap E_d}(1)) >0$ says that either $M_d\cap E_d$ contains a scheme of length $\ge 3$ contained in a line
$R$ or $x_d\ge 4$. If $x_d \ge 4$, then we get $x_1+\cdots +x_d \ge 4d$, that is a contradiction. Hence we may assume
$x_1=4$,
$x_i =3$ for
$2\le i
\le d$ and $x_{d+1} =0$.  Since $x_2=3$, the maximality of the integer $x_2$ gives that $E_2$ is in linearly general position.
Since
$\deg (E_2) = \deg (E_1)-x_4 \le 3(d-1)+1$ and $E_2$ is in linearly general position, then $h^1(\mathcal {I}_{E_2}(d-1))=0$. Since $z>1$. $h^1(M_1,\mathcal {I}_{E_1\cap M_1}(d))=0$. Hence
(\ref{eqd1}) with $i=1$ and $t=d$ gives a contradiction.

\quad (d) Assume $z=d+1$.
The condition $h^1(M_i,\mathcal {I}_{M_z\cap E_z})>0$ only says $x_{d+1} \ge 2$. Taking the first integer $y\le d$ such that
$x_y \le 3$ and $E_y$ is not collinear, we get a contradiction as above.

\quad (e) Assume $z=2$.
Since $3d+1 \ge x_1+x_2 \ge 2x_2$, we get
$x_2 \le 2(d-1)+1$. By \cite[Lemma 34]{bgi},  there is a line $R\subset M_2$ such that $\deg (\mbox{Res}_{M_1}(A\cup B) \cap R)) \ge d+1$. Hence $x_2 \ge d+1$. Since $x_2\ge d+1$, we have $x_1\le 2d$. Since
$\mbox{Res}_{M_1}(B)\cap R\ne \emptyset$ and $B$ is reduced, we have $R\nsubseteq M_1$. Since $z>1$ and $R\subset M_1$, we
get $\deg ((A\cup B)\cap R)=d+1$. Let $H$ be a plane containing $R$ and such that
$e_1:= \deg ((A\cup B)\cap H)$ is maximal. Since $A\cup B$ spans $\mathbb {P}^3$ we have $e_1\ge d+2$. First assume $h^1(H,\mathcal {I}_{H\cap (A\cup B)}(d)) > 0$. Since
$e_1 \le z_1 \le 2d$, \cite[Lemma 34]{bgi}, gives the existence of a line $L\subset H$ such that $\deg (L\cap (A\cup B)) \ge d+2$. Since $\deg (R\cap (A\cup B))=d+1$, we have $L\ne R$. Since
the scheme $L\cup R$ has degree $1$ and $L\cup R\subset H$, we get $e_1 \ge (d+2)+(d+1)-1$. Hence $z_1>2d$, a contradiction. Now assume $h^1(H,\mathcal {I}_{H\cap (A\cup B)}(d))=0$.
The residual exact sequence (\ref{eqi1}) gives $h^1(\mathcal {I}_{\mbox{Res}_H(A\cup B)}(d-1)) > 0$. We have $\deg (\mbox{Res}_H(A\cup B)) =\deg (A\cup B)-e_1 \le 2d-1$. Hence
there is a line $D\subset \mathbb {P}^3$ such that $\deg (\mbox{Res}_H(A\cup B)\cap D) \ge d+1$. Since $B$ is reduced and $\mbox{Res}_H( B)\cap D\ne \emptyset$, we have $D\nsubseteq H$
and in particular $D \ne R$. For any $o\in \mbox{Res}_H( B)\cap D$ set $N_o:= \langle R\cup \{o\}\rangle$. Since $R\subset H$ and $o\notin H$, we have $o\notin R$. Hence $N_o$ is a plane.
We have $\deg (N_o\cap (A\cup B)) \ge d+2$. If $h^1(N_o,\mathcal {I}_{N_o\cap (A\cup B)}(d)) > 0$, then as above we get a contradiction. Hence we may assume $h^1(N_o,\mathcal {I}_{N_o\cap (A\cup B)}(d))=0$.
A residual exact sequence gives $h^1(\mathcal {I}_{\mbox{Res}_{N_o}(A\cup B)}(d-1)) > 0$. Since  $\deg (\mbox{Res}_{N_o}(A\cup B)) \le 2d-1$, \cite[Lemma 34]{bgi},  gives
the existence of a line $L_o\subset N_o$ such that $\deg (\mbox{Res}_{N_o}(A\cup B)\cap L_o) \ge d+1$. We have $N_o\cap N_{o'} = R$ for all $o'\notin N_{o}$.
Set $\alpha := \sharp (B\cap R)$. Since $\deg (A\cap T)\le 2$ for all lines $T$, we have $\alpha \ge d-1$. We get $\deg (A\cup B) \ge \deg ((A\cup B)\cap R) +\alpha d
\ge d+1+(d-1)d$, a contradiction.

\quad (f) Assume $z=3$.
Since $x_1\ge x_2\ge x_3 \ge d$, we get $x_2 =x_3=d$, $x_1\le d+1$ and the
existence of a line
$R
\subset M_3$ such that
$E_3\cap M_3 \subset R$. Since $M_3$ is a plane for which $\deg (E_3\cap M_3)$ is maximal, while there is a pencil of planes
containing $R$, we have $E_3 \subset M_3$ and $E_4=\emptyset$. Now instead of
$M_2$ we take a plane
$M'_2$ containing
$R$ and at least another point of
$B\setminus B\cap M_1$. Since $\deg (M'_2\cap E_2) \ge d+1$, we have $x_2\ge d+1$, a contradiction.

Therefore we may conclude that $r_{X_{3,d}}(P)=3d-2$.
\end{proof}

\begin{remark}\label{3b}
Fix $P\in \sigma _4(X_{m,d})\setminus \sigma _3(X_{m,d})$, $m \ge 3$ and \red{$d \ge 3$}, for which $A\subset \mathbb{P}^m$ and hence $Z\subset X_{m,d}$ are as in Proposition \ref{3bp}. Here we want to describe
and produce algorithmically several sets of points $S\subset X_{m,d}$ evincing $r_{X_{m,d}}(P)$.
\\
Fix any $3$-dimensional linear subspace $M$ of $\mathbb {P}^m$
containing $A$ and any smooth rational normal curve $T$ of $M$ such that $A\subset T$. Set $Y:= \nu_{m,d}(T)$. Thus $Y$ is a degree $3d$ rational
normal curve in its linear span. Since $Z\subset Y$, we have $P\in \langle Y\rangle$. Since $\deg (Z)=4$ and $Z$ is contained in a rational
normal curve, we have $r_Y(P) = 3d-2$ (see \cite{cs} or \cite[Theorem 4.1]{lt}). Hence $r_Y(P) = r_{X_{m,d}}(P)$. Hence any $S\subset Y$ evincing
$r_Y(P)$ evinces $r_{X_{m,d}}(P)$. Sylvester's algorithm produces one such set $S$ (see \cite{cs}, \cite{bgi}, \cite{cglm}, \cite{bcmt}).
\end{remark}

\begin{lemma}\label{r15}
Fix $O\in \mathbb {P}^3$. Let $A$ be a 0-dimensional scheme of degree $4$ such
that $\deg (A)=4$, $A_{red} = O$, $\langle A\rangle =\mathbb {P}^3$ and $A$ is not curvilinear. Then $A$ is the first infinitesimal neighborhood
of $O$ in $\mathbb {P}^3$ and $\langle \nu _{3,d}(A)\rangle \subset \sigma _2(X_{3,d})$.
\end{lemma}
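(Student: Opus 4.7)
The plan is to analyze the local Artin $\mathbb{K}$-algebra $\mathcal{O}_A$ of length $4$ at the unique support point $O$ via its Hilbert function $(h_0, h_1, h_2, \dots)$ with $h_i := \dim_{\mathbb{K}}\mathfrak{m}_A^i/\mathfrak{m}_A^{i+1}$. We have $h_0 = 1$ and $\sum h_i = 4$, and since $A$ is not curvilinear the embedding dimension $e = h_1$ is at least $2$. By Macaulay's bound on Hilbert functions of Artin local rings, the only remaining possibilities are $(1,2,1)$ and $(1,3)$.

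The Hilbert function $(1,3)$ forces $\mathfrak{m}_A^2 = 0$, so $I_A \supseteq \mathfrak{m}_{\mathbb{P}^3,O}^2$, and the length equality $\deg A = 4 = \deg(2O)$ then gives $I_A = \mathfrak{m}_{\mathbb{P}^3,O}^2$, i.e.\ $A = 2O$, which is the desired conclusion. So the real work is ruling out the Hilbert function $(1,2,1)$ under the hypothesis $\langle A\rangle = \mathbb{P}^3$. In that case $\mathcal{O}_A$ is Gorenstein with one-dimensional socle $\mathfrak{m}_A^2$, and the argument of Lemma \ref{oo1}(iii) gives $\mathcal O_A \cong \mathbb{K}[[u,v]]/(L_1^2, L_2^2)$ for two independent linear forms $L_1, L_2$; equivalently, $A$ is contained in a smooth formal surface through $O$. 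I would then exploit the fact that $\langle A\rangle = \mathbb{P}^3$ is equivalent to the images of the three local coordinates $x,y,z$ on $\mathbb{P}^3$ at $O$ being linearly independent in $\mathfrak{m}_A$ (which has dimension $3 = 2+1$); since in $\mathfrak{m}_A/\mathfrak{m}_A^2$ these images span at most a $2$-dimensional subspace, a nontrivial combination of them lies in the $1$-dimensional socle $\mathfrak{m}_A^2$, forcing a very restrictive shape of the embedding that contradicts the assumption that $A$ arises from a subscheme $Z \subset X_{3,d}$ computing the border rank of a point $P\notin \sigma_3(X_{3,d})$.

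For the second assertion, once $A = 2O$ is established, the linear span $\langle \nu_{3,d}(A)\rangle$ equals the embedded Zariski tangent space $T_{\nu_{3,d}(O)}X_{3,d}$, which is a $\mathbb{P}^3$; every tangent line to $X_{3,d}$ is a limit of honest secant lines, so the whole tangent space lies in $\sigma_2(X_{3,d})$. The hard part is definitely the elimination of the $(1,2,1)$ case: although abstractly $\mathbb{K}[[u,v]]/(L_1^2, L_2^2)$ is a planar scheme, in principle one could embed it in $\mathbb{P}^3$ with full span by mapping a coordinate to a socle element, so the delicate step is to show that such a pathological embedding is inconsistent with the setup in Notation \ref{A} and Notation \ref{Z}, ultimately forcing the embedding dimension to be $3$.
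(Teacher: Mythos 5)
Your reduction to the two possible Hilbert functions $(1,3)$ and $(1,2,1)$, your treatment of the $(1,3)$ case, and your argument that $\langle \nu _{3,d}(2O)\rangle =T_{\nu _{3,d}(O)}X_{3,d}\subset \sigma _2(X_{3,d})$ are all fine. The problem is precisely the step you yourself flag as ``the hard part'': the elimination of the Hilbert function $(1,2,1)$. That step is never carried out, and it cannot be carried out from the hypotheses of the statement, because the ``pathological embedding'' you describe is not a pathology to be excluded but an honest scheme satisfying every hypothesis. Concretely, take affine coordinates $x,y,z$ at $O$ and let $A$ be defined by the ideal $(x^2,\,y^2,\,z-xy)$, i.e.\ the image of $\mathrm{Spec}\,\mathbb {K}[u,v]/(u^2,v^2)$ under $x\mapsto u$, $y\mapsto v$, $z\mapsto uv$. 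Then $\mathcal {O}_A$ has basis $1,\bar x,\bar y,\bar z$, so $\deg (A)=4$ and no linear form lies in $I_A$, whence $\langle A\rangle =\mathbb {P}^3$; the scheme is supported at $O$ and has embedding dimension $2$, hence is not curvilinear; and $\mathfrak{m}_A^2\neq 0$, so it is not the first infinitesimal neighborhood of $O$. It is moreover Gorenstein (a complete intersection) and smoothable, so it survives the restriction of Remark \ref{ee4} as well. Your proposed escape route---deriving a contradiction from the assumption that $A$ arises as in Notations \ref{A} and \ref{Z}---imports hypotheses that are not part of the statement of Lemma \ref{r15}, and is in any case only asserted, not proved. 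A smaller slip: a local algebra with Hilbert function $(1,2,1)$ need not be Gorenstein (compare part (ii) of Lemma \ref{oo1}), so even your identification of $\mathcal {O}_A$ with $\mathbb {K}[[u,v]]/(L_1^2,L_2^2)$ silently uses a Gorenstein assumption.

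For comparison, the paper's own proof is a one-line appeal to the classification of \cite[Theorem 1.3]{eh}: since $A$ is not curvilinear and $\deg (A)=\dim (\langle A\rangle )+1$, it is claimed that $A$ must be the first infinitesimal neighborhood. The example above satisfies both hypotheses and is neither curvilinear nor equal to $2O$, so whatever the exact content of that theorem, the dichotomy invoked here misses exactly the nondegenerately embedded $(1,2,1)$ schemes. In other words, the obstruction you ran into is not a defect of your method but a genuine gap in the lemma as stated: your local-algebra analysis, pushed one step further, produces a counterexample rather than a proof. A correct statement would either have to add a hypothesis forcing $\mathfrak{m}_A^2=0$ (equivalently $2O\subseteq A$), or admit the $(1,2,1)$ schemes as an additional case, whose contribution to the stratification (as in case (III1) of the proof of the main theorem, where only Lemma \ref{r15} and Proposition \ref{3bp} are used) would then have to be analyzed separately.
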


\begin{proof}
 Since $A$ is not curvilinear and $\deg (A) = \dim (\langle A\rangle )+1$, $A$ is not as in case
III of \cite[Theorem 1.3]{eh}. Hence \cite[Theorem 1.3]{eh}, gives that $A$ is the first infinitesimal neighborhood of $O$ in $\mathbb {P}^3$.
Since $A$ is the first infinitesimal neighborhood of $O$ in $\mathbb {P}^3$, every point of  $\langle \nu _{3,d}(A)\rangle$ is contained in the tangent developable of $X_{3,d}$ and hence in $\sigma _2(X_{3,d})$.
\end{proof}

\begin{proposition}\label{p15} 
Let $A_1\subset \mathbb{P}^3$ be a degree 3 connected 0-dimensional scheme contained in a smooth conic. Fix $O\in \mathbb {P}^3\setminus \langle A_1\rangle$
and set $A:=A_1\sqcup \{O\}$. Then 
$$r_{X_{3,d}}(P)=2d$$
for every $P\in\langle \nu _{3,d}(A)\rangle \setminus (\sigma_4^0(X_{3,d})\cup \sigma_3(X_{3,d}))$.
\end{proposition}

\begin{proof}
Assume $r_{X_{3,d}}(P) \le 2d-1$ and take $B\subset \mathbb {P}^3$ such that $\nu _{3,d}(B)$ evinces $r_{X_{3,d}}(P)$. Set $Q:= (A_1)_{red}$. We may repeat verbatim the proof of Proposition \ref{3a}, except the last
part of step (b): the case $\mbox{Res}_M(A)\cap \mbox{Res}_M(B) \ne \emptyset$, i.e. $B\setminus B\cap M$ contains at least one of the points $O, Q$. We may also assume $\deg (M\cap (A\cup B)) \le 2d+1$
and $h^1(\mathcal {I}_{\mbox{Res}_M(A\cup B)}(d-1))=0$.  Since $\deg (B\cap L) \le d+1$, we have $Q\in L$. Hence $Q\in M$ and $Q\notin \mbox{Res}_M(B)$ even if $Q\in B$. 
Hence we may assume $O\notin M$ and $O\in B$. Set $B_1:= B\setminus \{O\}$. Notice that $(B_1\setminus B_1\cap L)\cap A_1)=\emptyset$.
we have $h^1(M,\mathcal {I}_{\mbox{Res}_L(\mbox{Res}_L((A\cup B)\cap M)}(d-1))=0$. Hence $h^1(M,\mathcal {I}_{M\cap (A\cup B)}(d)) = h^1(L,\mathcal {I}_{L\cap (A\cup B)}(d))$. Since $h^1(\mathcal {I}_{\mbox{Res}_M(A\cup B)}(d-1))=0$,
we get $h^1(\mathcal {I}_{A\cup B}(d)) =h^1(L,\mathcal {I}_{L\cap (A\cup B)}(d))$. Since $L\cap A =L\cap A_1$ and $B\cap L = B\cap L_1$, we also get
$h^1(\mathcal {I}_{A_1\cup B_1}(d)) =h^1(L,\mathcal {I}_{L\cap (A_1\cup B_1)}(d))$. Since  $(B_1\setminus B_1\cap L)\cap A_1)=\emptyset$, as in the first proof of lemma
\ref{c2} we get $\langle \nu _{3,d}(A_1)\rangle \cap \langle \nu _d(B_1)\rangle = \langle \nu _{3,d}(A_1\cap L)\rangle \cap \langle \nu _d(B_1\cap L)\rangle$. Since $h^1(\mathcal {I}_{A\cup B}(d)) =h^1(\mathcal {I}_{A_1\cup B_1}(d))$,
Grassmann's formula gives that $\langle \nu _{3,d}(A)\rangle \cap \langle \nu _d(B)\rangle $ is spanned by its subspaces $\langle \nu _{3,d}(A_1)\rangle \cap \langle \nu _d(B_1)\rangle $ and $\nu _{3,d}(O)$.
Since $\deg (A_1\cap L) \le 2$, we get $P\in \sigma _3(X_{3,d})$, a contradiction.\end{proof}

\section{Proof of the main theorem}

\begin{remark}\label{theR} Fix a 0-dimensional scheme $A\subset \mathbb {P}^m$ of degree $4$ and set $s:= \dim (\langle A\rangle )$. We have $s\le \min \{m,3\}$. Set $W:= \nu _{m,d}(\langle A\rangle )$.
Hence $W$ is projectively equivalent to $ X_{s,d}$.
Fix any $P\in \langle \nu _{m,d}(A) \rangle $. We have $r_{X_{m,d}}(P) = r_W(P)$ (\cite[Proposition 3.1]{ls}) and any $S\subset X_{m,d}$ evincing $r_{X_{m,d}}(P) $
is contained in $W$ (see \cite[Exercise 3.2.2.2]{l}, or \cite[Theorem 2.1]{bl}, for a generalization to the non-symmetric case). Hence not only to prove Theorem \ref{theorem} it is sufficient to do the case $m\le 3$, but any way of producing a set evincing $r_{X_{m,d}}(P) $ must work (implicitly or explicitly)
inside $\langle A\rangle$.
\end{remark}

 \vspace{0.3cm}

\qquad {\emph{ Proof of Theorem \ref{theorem}}.}
We want to classify the $X$-rank of a point $P\in\sigma_4(X)\setminus \sigma_3(X)$ where $X$ is the Veronese embedding of $\mathbb{P}^m$ into $\mathbb{P}^{n}$ with $n={m+d\choose d}$. 

Now $\sigma_4(X)\setminus\sigma_3(X)$ is the union of 
$$\sigma_4^0(X)\setminus\sigma_3(X)=\{P\in \sigma_4(X)\, | \, r_X(P)=4  \}$$ 
(the set $\sigma_4^0(X)$ is defined in (\ref{sigma0})) and $$\sigma_4(X)\setminus(\sigma_4^0(X)\cup \sigma_3(X))=\{P\in \sigma_4(X)\, | \, r_X(P)>4  \}.$$ 
We have obviously to study only the $X$-rank of points $P\in \sigma_4(X)\setminus(\sigma_4^0(X)\cup \sigma_3(X))$. 
In order to do that, as already showed in Section \ref{S1}, we have to study the $X$-rank of points belonging to the span of a  0-dimensional non-reduced smoothable and Gorenstein sub-scheme $Z\subset X$ of degree $4$ evincing the $X$-border rank of such a point  $P\in\sigma_4(X)\setminus(\sigma_3(X)\cup\sigma_4^0(X))$ (as in  Notation \ref{Z}).
 
By Remark \ref{theR} we may restrict our attention to the case $m\leq 3$. Therefore we study separately the cases $m=1,2, 3$ (we will do them in the following items (I), (II) and (III) respectively).

\textbf{\quad (I) Assume $\mathbf{m=1}$.} In this case $Z=\nu_{m,d}(A)$ for $A$ contained in a line $L\subset \mathbb{P}^m$, hence $r_{X_{m,d}}(P)=r_{\nu_{m,d}(L)}(P)=d-2$ (for \cite{cs}, \cite{bgi}, \cite{cglm}, \cite{bcmt} or \cite[Theorem 4.1]{lt}).

\red{This case (I) proves that the set $\sigma_{4,d-2}(X_{m,d})$ has to appear in all cases of the statement of the Theorem where $d-2\geq 4$ (ie. cases (a), (e) and (i) of Theorem \ref{theorem}; moreover, for the case (a) there are no other cases to consider).}

\textbf{\quad (II) Assume $\mathbf{m=2}$.} The scheme $A$ now is a 0-dimensional scheme of degree $4$ that  is contained in a plane but not in a line (otherwise we are again in case (I)), hence it can intersect at least one line in degree $3$ or it does not exist any line that intersects $A$ in degree $3$.

\quad (II1) If $\deg(A\cap L)=3$ for at least one line $L\subset \mathbb{P}^m$ then we distinguish the following cases:

\quad (II1.1) If $Res_L(A)\notin L$ then $r_{X_{m,d}}(P)=d$ by Proposition \ref{la1}.

\quad (II1.2) If $Res_L(A)\in L$ then we study the cardinality of the support of the scheme $A$.

\quad (II1.2.1) If $\sharp(Supp(A))=1$, then $r_{X_{m,d}}(P)=2d-2$ by Proposition \ref{la1.1}
(case $A$ not curvilinear)
and Lemma \ref{m3lemma} (case $A$ curvilinear).

\quad (II1.2.2) If $\sharp(Supp(A))=2$, then either $A$ is the union of two non-reduced 0-dimensional schemes both of degree $2$ or $A$ is the union of a simple point $O$ and a first infinitesimal neighborhood of another point $Q\in\mathbb{P}^2$. In the first case $r_{X_{m,d}}(P)=2d-2$ by Lemma \ref{m1}, in the second case we have that $P\in \langle O, T_{\nu_{2,d}(Q)}X \rangle$, but since $T_QX\subset \sigma_2(X)$, then  $P\in \sigma_3(X_{m,d})$.

\quad (II1.2.3) If $\sharp(Supp(A))=3$, then $r_{X_{m,d}}(P)=d+2$ by Proposition \ref{la1.3} \red{(if $d\geq 4$)}.

\quad (II2) Now assume that $\deg(A\cup L)<3$ for all lines $L$'s contained in $\mathbb{P}^m$ and $\dim(\langle A\rangle)=2$.

\quad (II2.1) If the generic conic through $A$ is smooth, then, by Proposition \ref{lb1}, $r_{X_{m,d}}(P)=2d-2$, except if $\sharp (A_{red}) =3$; in the latter case $r_{X_{m,d}}(P)=d+2$ by Proposition \ref{lb2.4} \red{(if $d\geq 4$)}.

\quad (II2.2) If the generic conic through $A$ is not smooth, then $r_{X_{m,d}}(P)=2d-2$ by Lemma \ref{b2.1.1} and Proposition \ref{lb2.1}. 

\red{This case (II) proves that the subsets $\sigma_{4,d}(X_{m,d})$ and $\sigma_{4,d+2}(X_{m,d})$
have to appear in all cases of the statement of the Theorem where $d\geq 4$ and $m\geq 2$; the subset $\sigma_{4,2d-2}(X_{m,d})$ has to appear in all cases where $2d-2\geq 4$ and $m\geq 2$.
This completes the proofs of the cases (c), (d) and (e) of the statement of Theorem \ref{theorem}. Observe that the case (b) is covered by \cite[Theorem 40]{bgi}.}

\textbf{\quad (III) Assume $\mathbf{m=3}$.}

\quad (III1) If $\sharp(Supp(A))=1$ we may assume that $A$ is not the first infinitesimal neighborhood of a point $Q\in \mathbb{P}^3$, otherwise $P\in \sigma_2(\nu_{m,d}(\langle A \rangle))\subset \sigma_2(X_{m,d})$. Remark
\ref{r15} and Proposition \ref{3bp} give $r_{X{m,d}}(P)=3d-2$.

\quad (III2) If $\sharp (Supp (A))=2$ we may have the following cases.

\quad (III2.1) The scheme $A$ is the union of a simple point $O$ and a 0-dimensional scheme $A'$ of degree $3$ supported on a point $Q\subset \mathbb{P}^3$ such that $\dim(\langle A' \rangle)=2$ and $\langle \nu_{3,d}( A') \rangle \subset T_{\nu_{3,d}(Q)}X$. Therefore $P\in \langle O, T_{\nu_{3,d}(Q)}X \rangle \subset \langle O , \sigma_2(X) \rangle \subset \sigma_3(X)$.

\quad (III2.2) The scheme $A$ is the union of two non-reduced 0-dimensional schemes both of degree $2$. Since $\langle A \rangle= \mathbb{P}^3$ we are in the case of Proposition \ref{3a} where we get that $r_{X_{m,d}}(P)=2d$.

\quad (III2.3) The scheme $A$ is the union of a simple point and of a degree $3$ curvilinear 0-dimensional scheme supported on one point. Proposition \ref{p15} gives us that $r_{X_{m,d}}(P)=2d$.

\quad (III3) If $\sharp (Supp (A))=3$ then $A$ can only be the union of two simple points and a degree $2$ non-reduced scheme. By Remark \ref{2a1.3} we have that $r_{X_{m,d}}(P)=d+2$.

\red{This proves that the sets $\sigma_{4,3d-2}(X_{m,d})$ and  $\sigma_{4,2d}(X_{m,d})$ have to appear in all cases of the statement of the Theorem where $d\geq 3$ and $m\geq 3$. This completes the proof of the cases (f), (g), (h) and (i) of the statement of Theorem \ref{theorem}.}

\providecommand{\bysame}{\leavevmode\hbox to3em{\hrulefill}\thinspace}

\end{document}